\newtheorem{theorem}{Theorem}[section]
\newtheorem{definition}[theorem]{Definition}
\newtheorem{proposition}[theorem]{Proposition}
\newtheorem{lemma}[theorem]{Lemma}
\theoremstyle{remark}
\newtheorem*{remark}{Remark}
\DeclareMathOperator{\Aut}{Aut}
\DeclareMathOperator{\disc}{disc}
\DeclareMathOperator{\End}{End}
\DeclareMathOperator{\Gal}{Gal}
\DeclareMathOperator{\Hom}{Hom}
\DeclareMathOperator{\Jac}{Jac}
\DeclareMathOperator{\Norm}{Norm}
\DeclareMathOperator{\Pic}{Pic}
\DeclareMathOperator{\rank}{rank}
\DeclareMathOperator{\Tr}{Tr}
\DeclareMathOperator{\tworank}{2-rank}
\newcommand{\CC}{{\mathbf{C}}}
\newcommand{\EE}{{\mathbf{E}}}
\newcommand{\FF}{{\mathbf{F}}}
\newcommand{\QQ}{{\mathbf{Q}}}
\newcommand{\ZZ}{{\mathbf{Z}}}
\newcommand\gothA{{\mathfrak{A}}}
\newcommand\gothp{{\mathfrak{p}}}
\newcommand\gothq{{\mathfrak{q}}}
\newcommand{\calB}{{\mathcal{B}}}
\newcommand{\calC}{{\mathcal{C}}}
\newcommand{\calO}{{\mathcal{O}}}
\newcommand{\calS}{{\mathcal{S}}}
\newcommand{\calU}{{\mathcal{U}}}
\newcommand{\col}{\,{:}\,}
\newcommand{\eps}{\varepsilon}
\newcommand{\Fq}{\FF_q}
\newcommand{\Sha}{\textup{III}\llap{\rule[-0.02ex]{1.1em}{0.1ex}\hspace{0.02em}}}
\newcommand{\Drinfeld}{Drinfel{\cprime}d}
\newcommand{\Vladut}{Vl\u adu\c t}
\renewcommand{\hat}{\widehat}
\newcommand\lowtilde{\lower0.7ex\hbox{\textasciitilde}}
\newcommand\us{\textunderscore}
\newcommand{\mybar}[1]{
  \mathchoice
  {#1\llap{$\overline{\phantom{\displaystyle\rm#1}}$}}
  {#1\llap{$\overline{\phantom{\textstyle\rm#1}}$}}
  {#1\llap{$\overline{\phantom{\scriptstyle\rm#1}}$}}
  {#1\llap{$\overline{\phantom{\scriptscriptstyle\rm#1}}$}}
}  
\renewcommand{\bar}{\mybar}
\newcommand{\bbar}{\bar{b}}
\newcommand{\rbar}{\bar{r}}
\newcommand{\xb}{\bar{x}}
\newcommand{\yb}{\bar{y}}
\newcommand{\pibar}{\bar{\pi}}
\newcommand{\zetabar}{\bar{\zeta}}
\newcommand{\Fqbar}{\bar{\FF}_q}
\newcommand{\gothpbar}{\bar{\gothp}}
\newcommand{\gothqbar}{\bar{\gothq}}
\begin{document}

\title[Bounding the number of points on curves]
{New methods for bounding the number of points on curves over finite fields}

\author[Howe]{Everett W. Howe}
\address{Center for Communications Research,
         4320 Westerra Court,
         San Diego, CA 92121-1967, USA.}
\email{however@alumni.caltech.edu}
\urladdr{http://www.alumni.caltech.edu/\lowtilde{}however/}

\author[Lauter]{Kristin E. Lauter}
\address{Microsoft Research,
         One Microsoft Way,
         Redmond, WA 98052, USA.}
\email{klauter@microsoft.com}

\date{28 February 2012}

\keywords{Curve, rational point, zeta function, Weil bound, Serre bound,
          Oesterl\'e bound, Birch and Swinnerton-Dyer conjecture}

\subjclass[2010]{Primary 11G20; Secondary 14G05, 14G10, 14G15} 

\begin{abstract}
We provide new upper bounds on $N_q(g)$, the maximum number of rational
points on a smooth absolutely irreducible genus-$g$ curve over $\Fq$, 
for many values of $q$ and $g$.  Among other results, we find that 
$N_4(7) = 21$ and $N_8(5) = 29$, and we show that a genus-$12$ curve 
over $\FF_2$ having $15$ rational points must have characteristic 
polynomial of Frobenius equal to one of three explicitly given 
possibilities.

We also provide sharp upper bounds for the lengths of the shortest
vectors in Hermitian lattices of small rank and determinant over the
maximal orders of small imaginary quadratic fields of class number~$1$.

Some of our intermediate results can be interpreted in terms of 
Mordell--Weil lattices of constant elliptic curves over one-dimensional
function fields over finite fields.  Using the Birch and Swinnerton-Dyer
conjecture for such elliptic curves, we deduce lower bounds on the
orders of certain Shafarevich--Tate groups.
\end{abstract}

\maketitle

\section{Introduction}
\label{S:intro}

The last three decades have seen increasing interest in the calculation
of the value of $N_q(g)$, the maximum number of rational points on a
smooth, absolutely irreducible curve $C$ of genus $g$ over a finite
field~$\Fq$.  Initially this increased interest was motivated in part
by new constructions of error-correcting codes exceeding the 
Gilbert--Varshamov bound, but now there are many problems related to the
computation of $N_q(g)$ that are mathematically attractive in their own
right, independent of possible applications in coding theory.

In the 1940s, Andr\'e Weil~\cites{Weil1940,Weil1941,Weil1945,Weil1946}
showed that if $C$ is a genus-$g$ curve over $\Fq$, then
\[ q+1-2g\sqrt{q} \le \#C(\Fq) \le q+1+2g\sqrt{q},\]
so that $N_q(g) \le q+1+2g\sqrt{q}.$  In the 1980s this upper bound was
improved in a number of ways.  Serre~\cite{Serre1983a} showed that
\[ N_q(g) \le q + 1 + g \lfloor 2\sqrt{q}\rfloor, \]
and Manin~\cite{Manin} and Ihara~\cite{Ihara} showed that the Weil
bound could be improved even further when $g$ is large with respect 
to~$q$.  Generalizing these ideas, \Drinfeld\ and \Vladut\ showed that
for fixed $q$,
\[
   N_q(g)\leq (\sqrt{q}-1+o(1)) g \qquad\text{as $g \rightarrow \infty$,}
\]
and Serre~\cite{Serre:notes} developed the ``explicit formul\ae''
method (optimized by Oesterl\'e), which gives the best bound on 
$N_q(g)$ that can be obtained formally using only Weil's 
``Riemann hypothesis'' for curves and the fact that for every $d\ge0$
the number of degree-$d$ places on a curve is non-negative. For general
$q$ and $g$ the Oesterl\'e bound has not been improved upon, but for 
certain families and special cases improvements can be made
\cites{FuhrmannTorres, Howe2011, HoweLauter2003, Kohnlein, 
       KorchmarosTorres, Lauter1999, Lauter2000, Lauter:conf, 
       LauterSerre2001, LauterSerre2002, Savitt, Serre1983a, 
       Serre:notes, Serre1983b, Serre1984, Stark, StohrVoloch,
       Zaytsev}.

In 2000, van der Geer and van der Vlugt~\cite{GeerVlugt2000} published
a table of the best upper and lower bounds on $N_q(g)$ known at the
time, for $g\le 50$ and for $q$ ranging over small powers of $2$ 
and~$3$. They updated their paper twice a year after its publication,
and the revised versions were made available on van der Geer's website.
In 2010, van der Geer, Ritzenthaler, and the authors, with technical
assistance from Geerit Oomens, incorporated the updated tables
from~\cite{GeerVlugt2000} into the online tables now available
at \url{manypoints.org}.  These new online tables display results for
many more prime powers $q$ than were in~\cite{GeerVlugt2000}: the 
primes less than~$100$, the prime powers $p^i$ for $p<20$ and $i\le 5$,
and the powers of $2$ up to~$2^7$.  The original tables of van der Geer
and van der Vlugt inspired us to do the work that appeared in our
earlier paper~\cite{HoweLauter2003}; afterwards, we continued to work
on the problem of improving the known upper bounds on $N_q(g)$, and the
work we present in this paper was used to help populate the 
\texttt{manypoints} tables when the site was created.

In our 2003 paper we used a number of techniques to show that certain
isogeny classes of abelian varieties over finite fields do not contain 
Jacobians of curves; by enumerating the isogeny classes of a given
dimension $g$ over a given field $\Fq$ that could possibly contain a
Jacobian of a curve with $N$ points, and then applying our techniques,
we were able to show that some values of $N$ could not occur.  We were
thus able to improve the known upper bounds on $N_q(g)$ for many 
pairs $(q,g)$.  In this paper, we introduce four new techniques that 
can sometimes be used to show that an isogeny class of abelian 
varieties does not contain a Jacobian.  These new techniques were 
responsible for improving more than $16\%$ of the upper bounds in
the 2009 version of~\cite{GeerVlugt2000} when those results were
integrated into the \texttt{manypoints} tables.

The first of our new techniques concerns isogeny classes containing
product varieties. In our earlier paper, we showed that one can
sometimes deduce arithmetic and geometric properties of curves whose
Jacobians are isogenous to a product $A\times B$ when the resultant
of the radicals of the ``real Weil polynomials'' 
(see Section~\ref{S:gluing}) of $A$ and $B$ is small.  The first 
improvement we introduce here is to show that we can replace the 
resultant by the \emph{reduced resultant} in these arguments. The 
reduced resultant is defined, and the new results are explained, in 
Section~\ref{S:gluing}.  We also explain how in certain circumstances
we can replace the reduced resultant by an even smaller quantity that
depends more delicately on the varieties $A$ and~$B$.

In our earlier paper, we showed that if $E$ is a supersingular 
elliptic curve over $\Fq$ with all endomorphisms defined over~$\Fq$,
and if $A$ is an ordinary elliptic curve such that the resultant of
the real Weil polynomials of $E$ and $A$ is squarefree, then there
is no Jacobian isogenous to $E^n\times A$ for any $n>0$.  Our second
improvement is to show that the same statement holds when $A$ is an
arbitrary ordinary variety.  This is explained in 
Section~\ref{S:supersingular}.

Our third new technique concerns isogeny classes that contain a variety
of the form $A\times E^n$, where $E$ is an ordinary elliptic curve 
over~$\Fq$.  We show in Section~\ref{S:Hermitian} that if a curve $C$
has Jacobian isogenous to $A\times E^n$, then there is a map from $C$ 
to $E$ whose degree is bounded above by an explicit function of the
discriminant of $\End E$, the reduced resultant of the real Weil
polynomials of $E$ and~$A$, and the exponent $n$.  In order to produce
the sharpest bounds possible, we give an algorithm for computing the 
length of the shortest nonzero vectors in small Hermitian lattices over
imaginary quadratic fields of class number~$1$.  We provide tables of
some of these sharp upper bounds in Section~\ref{S:Hermitian}.

Our fourth technique is a theorem that gives an easy-to-check necessary
and sufficient condition for the entire category of abelian varieties
in a given ordinary isogeny class over a finite field to be definable
over a subfield.  We present this result and explain its significance
in Section~\ref{S:Galois}.

We have implemented all of our calculations in the computer algebra
package Magma~\cite{magma}. The programs we use are found in the 
package \texttt{IsogenyClasses.magma}, which is available on the first
author's website: Go to the bibliography page

\centerline{\href{http://alumni.caltech.edu/~however/biblio.html}
                {\texttt{http://alumni.caltech.edu/{\lowtilde}however/biblio.html}}
}

\noindent
and follow the link associated to this paper.  We outline the structure
of these Magma routines in Section~\ref{S:Magma}, and in 
Section~\ref{S:results} we present a sampling of the computational 
results we have obtained.  These include two new values of $N_q(g)$ and
an analysis of the possible Weil polynomials of genus-$12$ curves over
$\FF_2$ meeting the Oesterl\'e bound.

As we have mentioned, some of our arguments give upper bounds for the
degrees of maps from a curve $C$ to an elliptic curve~$E$.  Such upper
bounds give restrictions on the determinant of the Mordell--Weil 
lattice of the base extension of $E$ to the function field $K$ of~$C$.
In Section~\ref{S:Sha} we indicate how some of our results, when 
combined with proven cases of the conjecture of Birch and 
Swinnerton--Dyer, allow us to give lower bounds (and sometimes even
exact formulas) for the size of the Shafarevich--Tate group of $E$ 
over~$K$.

\section{Reduced resultants and the gluing exponent}
\label{S:gluing}

In our previous paper~\cite{HoweLauter2003} we analyzed non-simple
isogeny classes of abelian varieties over finite fields by bounding the
`distance' between a variety $A$ in the isogeny class and a product 
variety, measured essentially by the degree of the smallest isogeny
from $A$ to a product. We continue to use this same strategy, but we 
will improve upon our earlier bounds.

In the following definition, we use the convention that the greatest 
common divisor of the set $\{ 0\}$ is~$\infty$.

\begin{definition}
\label{D:GE}
Let $A_1$ and $A_2$ be abelian varieties over a finite field $k$. Let 
$E$ be the set of integers $e$ with the following property\textup{:}
If $\Delta$ is a finite group scheme over $k$ that can be embedded in
a variety isogenous to $A_1$ and in a variety isogenous to~$A_2$, then
$e\Delta = 0$.  We define the \emph{gluing exponent} $e(A_1,A_2)$ of 
$A_1$ and $A_2$ to be the greatest common divisor of the set~$E$.
\end{definition}

If $A_1$ and $A_2$ have no isogeny factor in common, there exist
\emph{nonzero} integers $e$ with the property mentioned in the
definition --- for example, the proof of 
\cite{HoweLauter2003}*{Lem.~7, p.~1684} shows that the quantity 
$s(A_1,A_2)$ defined in \cite{HoweLauter2003}*{\S1} has the desired 
property --- so $e(A_1,A_2)$ is finite in this case.  Clearly 
$e(A_1,A_2)$ depends only on the isogeny classes of $A_1$ and~$A_2$.

We make this definition because many of the results 
in~\cite{HoweLauter2003} remain true if their statements are modified 
by replacing $s(A_1,A_2)$ with $e(A_1,A_2)$; this is so because the 
only property of $s(A_1,A_2)$ used in the proofs of these results is 
that it lies in the set $E$.  In particular, 
\cite{HoweLauter2003}*{Thm.~1, p.~1678} becomes the following.

\begin{theorem}
\label{T:GE} 
Let $A_1$ and $A_2$ be nonzero abelian varieties over a finite 
field~$k$.
\begin{itemize} 
\item[(a)] If $e(A_1,A_2) = 1$ then there is no curve $C$ over $k$ 
           whose Jacobian is isogenous to $A_1\times A_2$. 
\item[(b)] Suppose $e(A_1,A_2) = 2$.  If $C$ is a curve over $k$ 
           whose Jacobian is isogenous to $A_1\times A_2$, then there
           is a degree-$2$ map from $C$ to another curve $D$ over $k$
           whose Jacobian is isogenous to either $A_1$ or~$A_2$. \qed
\end{itemize} 
\end{theorem}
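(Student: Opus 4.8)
The plan is to deduce the theorem from the proof of \cite{HoweLauter2003}*{Thm.~1}, by isolating the single place where the quantity $s(A_1,A_2)$ of that paper is used and checking that $e(A_1,A_2)$ can play its role. First I would recall the geometric setup. Since $e(A_1,A_2)$ is finite in both parts of the theorem, $A_1$ and $A_2$ have no common isogeny factor: a simple common factor $X$ would yield embeddings of $X[n]$ into varieties isogenous to $A_1$ and to $A_2$ for every $n$, forcing the set $E$ of Definition~\ref{D:GE} to be $\{0\}$, so that $e(A_1,A_2)$ would be infinite. So let $C$ be a curve over $k$ with $\Jac C$ isogenous to $A_1\times A_2$, write $J=\Jac C$ with its canonical principal polarization $\lambda$, let $B_1\subseteq J$ be the image of $A_1$ under an isogeny $A_1\times A_2\to J$, and let $B_2\subseteq J$ be the abelian subvariety complementary to $B_1$ with respect to $\lambda$. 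Then $B_1$ and $B_2$ are nonzero, $B_i$ is isogenous to $A_i$, we have $B_1+B_2=J$, the group scheme $\Delta:=B_1\cap B_2$ is finite, the addition map $B_1\times B_2\to J$ is an isogeny whose kernel is isomorphic to $\Delta$ (embedded antidiagonally), and $\lambda$ induces polarizations $\lambda_1$ on $B_1$ and $\lambda_2$ on $B_2$ whose product pulls back to $\lambda$.

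The key observation is that $\Delta\subseteq B_1$ and $\Delta\subseteq B_2$, so $\Delta$ is a finite group scheme that embeds into a variety isogenous to $A_1$ (namely $B_1$) and into a variety isogenous to $A_2$ (namely $B_2$). Hence $e\Delta=0$ for every $e\in E$, and therefore $e(A_1,A_2)\,\Delta=0$, since $e(A_1,A_2)$ is an integer combination of elements of~$E$. This is exactly the fact about $s(A_1,A_2)$ on which the proof of \cite{HoweLauter2003}*{Thm.~1} rests: the only property of $s(A_1,A_2)$ used there is that $s(A_1,A_2)\,\Delta=0$. So from here the argument proceeds as in \cite{HoweLauter2003}, with $e(A_1,A_2)$ in place of $s(A_1,A_2)$.

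For part~(a), $e(A_1,A_2)=1$ forces $\Delta=0$, so $B_1\times B_2\to J$ is an isomorphism and, since $\lambda$ is principal, $\lambda_1$ and $\lambda_2$ are principal as well; thus $(J,\lambda)\cong(B_1,\lambda_1)\times(B_2,\lambda_2)$ is a product of two nonzero principally polarized abelian varieties. This contradicts the classical fact that the canonical polarization of the Jacobian of a smooth, absolutely irreducible curve is indecomposable --- its theta divisor, a translate of the image of the $(g-1)$-fold symmetric power $C^{(g-1)}$ in $\Jac C$, is geometrically irreducible, whereas the theta divisor of a nontrivial product of principally polarized abelian varieties is reducible. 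So no such $C$ exists. For part~(b), $e(A_1,A_2)=2$ forces $2\Delta=0$; one then invokes the analysis in \cite{HoweLauter2003} of a Jacobian realized as a gluing of two abelian varieties along a common finite group scheme killed by~$2$, which --- after possibly interchanging $A_1$ and $A_2$ --- yields a curve $D$ over $k$ and a degree-$2$ morphism $C\to D$ with $\Jac D$ isogenous to~$A_1$.

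The real content is the claim in the second paragraph: that the proof of \cite{HoweLauter2003}*{Thm.~1} uses nothing about $s(A_1,A_2)$ beyond $s(A_1,A_2)\,\Delta=0$. For part~(a) this is transparent. For part~(b) it requires re-reading the construction of the double cover $C\to D$ --- the classification of the possible two-torsion gluing schemes and the way the covering is read off from the complementary subvariety $B_1$ --- and verifying that at each step the hypothesis actually used is that $\Delta$ is killed by~$2$, rather than the explicit definition of $s(A_1,A_2)$. I expect this bookkeeping, not any new geometric idea, to be the only obstacle; once it is carried out, the substitution of $e$ for $s$ is purely formal.
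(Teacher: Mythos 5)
Your proof is correct and follows the same route as the paper, which simply asserts (and leaves implicit) that the proof of Theorem~1 of \cite{HoweLauter2003} carries over verbatim once one knows that the quantity used there kills the gluing group scheme $\Delta$; you have supplied the one small missing check --- that $e(A_1,A_2)$, being an integer combination of elements of $E$, itself lies in $E$ and hence annihilates $\Delta$.
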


Also, \cite{HoweLauter2003}*{Lem.~7, p.~1684} becomes:

\begin{lemma}
\label{L:GE}
Suppose $B$ is an abelian variety over a finite field $k$ isogenous to
a product $A_1\times A_2$, where $e(A_1,A_2) < \infty$.  Then there 
exist abelian varieties $A_1'$ and $A_2'$, isogenous to $A_1$ 
and~$A_2$, respectively, and an exact sequence 
\[ 0\to \Delta \to A_1'\times A_2' \to B \to 0 \]
such that the projection maps $A_1'\times A_2' \to A_1'$ and 
$A_1'\times A_2' \to A_2'$ give monomorphisms from $\Delta$ to 
$A_1'[e]$ and to $A_2'[e]$, where $e = e(A_1,A_2)$.

Suppose in addition that $B$ has a principal polarization~$\mu$. Then
the pullback of $\mu$ to $A_1' \times A_2'$ is a product polarization
$\lambda_1 \times \lambda_2$, and the projection maps
$A_1'\times A_2' \to A_1'$ and $A_1'\times A_2' \to A_2'$ give
isomorphisms of $\Delta$ with $\ker \lambda_1$ and $\ker \lambda_2$.
In particular, $\Delta$ is isomorphic to its own Cartier dual.
\qed
\end{lemma}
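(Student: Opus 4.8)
The plan is to adapt the proof of \cite{HoweLauter2003}*{Lem.~7} essentially verbatim, substituting $e(A_1,A_2)$ for $s(A_1,A_2)$ throughout, and checking that the only property of $s(A_1,A_2)$ that the original argument invokes is its membership in the set $E$ of Definition~\ref{D:GE}. Concretely, one starts with an isogeny $\varphi\colon A_1\times A_2\to B$ and lets $\Delta_0$ be its kernel. Projecting $\Delta_0$ onto the two factors gives subgroup schemes of $A_1$ and $A_2$ up to isogeny; the idea is to quotient each $A_i$ by the part of $\Delta_0$ coming from the \emph{other} factor, producing $A_1'$ and $A_2'$ isogenous to $A_1$ and $A_2$ respectively, together with a new isogeny $A_1'\times A_2'\to B$ whose kernel $\Delta$ maps injectively into each factor. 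That $\Delta$ can be embedded in a variety isogenous to $A_1$ (namely $A_1'$) and in one isogenous to $A_2$ (namely $A_2'$) is exactly the hypothesis needed to conclude $e\Delta=0$ with $e=e(A_1,A_2)$, hence $\Delta\hookrightarrow A_i'[e]$. This is where the definition does its work: the original proof produced the same $\Delta$ and observed $s(A_1,A_2)\Delta=0$ for the same reason, so replacing $s$ by $e$ is legitimate and in fact gives a possibly smaller bound.

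For the second half, I would recall that a principal polarization $\mu$ on $B$ pulls back along the isogeny $f\colon A_1'\times A_2'\to B$ to a polarization $f^*\mu$ on $A_1'\times A_2'$ whose kernel is $\ker f=\Delta$ in the sense that $\deg f^*\mu=(\deg f)^2=(\#\Delta)^2$, and more precisely $\ker(f^*\mu)$ sits in an exact sequence relating it to $\Delta$ and its image under the dual isogeny. The key classical input is that a polarization on a product $A_1'\times A_2'$ that restricts suitably must, after the above construction, be a \emph{product} polarization $\lambda_1\times\lambda_2$ — this uses that $A_1$ and $A_2$ share no isogeny factor, so $\Hom(A_1',\hat A_2')$ and $\Hom(A_2',\hat A_1')$ contribute nothing and the polarization is block-diagonal. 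Granting that, $\ker(f^*\mu)=\ker\lambda_1\times\ker\lambda_2$, and comparing with the description $\ker(f^*\mu)\cong\Delta$ via the graph-type embedding forces the projections $\Delta\to\ker\lambda_1$ and $\Delta\to\ker\lambda_2$ to be isomorphisms. Since $\ker\lambda_i$ for a polarization $\lambda_i$ carries a canonical nondegenerate alternating pairing identifying it with its own Cartier dual, $\Delta\cong\ker\lambda_1$ is self-Cartier-dual, which is the final assertion.

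I expect the main obstacle to be the bookkeeping in the product-polarization step: one must verify carefully that the construction of $A_1'$ and $A_2'$ — chosen to make $\Delta$ embed diagonally — is compatible with the splitting of $f^*\mu$, i.e.\ that the quotients were taken by isotropic pieces so that no cross terms survive. In the original paper this is handled by a direct computation with the Weil pairing and the fact that $\ker f$ is isotropic for $f^*\mu$ (a standard consequence of $\mu$ being a polarization on the quotient $B$); the argument there goes through unchanged because it never uses the specific value $s(A_1,A_2)$, only that $\Delta$ is killed by some integer in $E$. So strictly speaking there is little new to prove: the content of the lemma is the observation that the proof of \cite{HoweLauter2003}*{Lem.~7} is ``polymorphic'' in the exponent, and I would present it as such, pointing to the earlier proof for the computational details and emphasizing only the one line where $e(A_1,A_2)\in E$ is invoked in place of $s(A_1,A_2)\in E$.
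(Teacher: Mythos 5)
Your proposal is correct and takes exactly the approach the paper intends: the statement is proved by rereading the proof of \cite{HoweLauter2003}*{Lem.~7} and noting that the only property of $s(A_1,A_2)$ it uses is membership in the set $E$ from Definition~\ref{D:GE}, which $e(A_1,A_2)=\gcd(E)$ shares. The one small slip in your sketch --- ``$\ker(f^*\mu)\cong\Delta$'' --- should read ``$\Delta\hookrightarrow\ker(f^*\mu)=\ker\lambda_1\times\ker\lambda_2$ with both projections injective, and $\#\ker(f^*\mu)=(\#\Delta)^2$ then forces both projections to be onto,'' but this does not affect the validity of the argument.
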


Furthermore, \cite{HoweLauter2003}*{Prop.~11, p.~1688} becomes:

\begin{proposition}
\label{P:general}
Let $A_1$ and $A_2$ be abelian varieties over a finite field $k$, and
let $e = e(A_1,A_2)$. Suppose that $e<\infty$ and that for every $A_1'$ 
isogenous to $A_1$ and every $A_2'$ isogenous to $A_2$, the only
self-dual finite group-scheme that can be embedded in both $A_1'[e]$ 
and $A_2'[e]$ as the kernel of a polarization is the trivial
group-scheme. Then there is no curve over $k$ with Jacobian isogenous
to $A_1\times A_2$. 
\qed
\end{proposition}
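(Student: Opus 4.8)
The plan is to reduce the statement to Lemma \ref{L:GE} and then observe that the hypothesis simply rules out the only group scheme that could appear there. Suppose, for contradiction, that $C$ is a curve over $k$ whose Jacobian $J = \Jac C$ is isogenous to $A_1 \times A_2$. Since $e = e(A_1,A_2) < \infty$, Lemma \ref{L:GE} applies with $B = J$: there exist abelian varieties $A_1'$ and $A_2'$ isogenous to $A_1$ and $A_2$ respectively, together with an exact sequence
\[ 0 \to \Delta \to A_1' \times A_2' \to J \to 0, \]
where the two projections embed $\Delta$ into $A_1'[e]$ and $A_2'[e]$. The key extra input is that $J$, being a Jacobian, carries a canonical principal polarization $\mu$ (the theta divisor). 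So the ``Suppose in addition'' clause of Lemma \ref{L:GE} is in force: the pullback of $\mu$ to $A_1' \times A_2'$ splits as a product polarization $\lambda_1 \times \lambda_2$, and the projections identify $\Delta$ with $\ker \lambda_1$ and with $\ker \lambda_2$. In particular $\Delta$ is a self-dual finite group scheme that embeds in $A_1'[e]$ as the kernel of a polarization and in $A_2'[e]$ as the kernel of a polarization.

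Now invoke the hypothesis of the proposition directly: for these particular $A_1'$ and $A_2'$ (which are isogenous to $A_1$ and $A_2$), the only self-dual finite group scheme that can be embedded in both $A_1'[e]$ and $A_2'[e]$ as the kernel of a polarization is the trivial one. Hence $\Delta = 0$, so the map $A_1' \times A_2' \to J$ is an isomorphism, and $J$ itself is isomorphic to a product $A_1' \times A_2'$ of lower-dimensional abelian varieties. But a principally polarized abelian variety that is a Jacobian is indecomposable as a principally polarized variety unless it is the trivial variety (equivalently, the theta divisor of a Jacobian of a positive-genus curve is irreducible), so $J$ cannot split nontrivially as a polarized product — contradiction. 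Therefore no such curve $C$ exists.

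The only genuinely delicate point is the last step: one must know that a Jacobian cannot decompose as a nontrivial product of principally polarized abelian varieties. This is exactly the ingredient already used in the proof of \cite{HoweLauter2003}*{Thm.~1}, and it is standard (the theta divisor of $\Jac C$ for $C$ of genus $\ge 1$ is irreducible, whereas the theta divisor of a polarized product is reducible). Everything else is formal: the passage from $s(A_1,A_2)$ to $e(A_1,A_2)$ changes nothing in the argument of \cite{HoweLauter2003}*{Prop.~11} because, as noted in the discussion following Definition \ref{D:GE}, the only property of $s(A_1,A_2)$ that was used is that it annihilates every finite group scheme common to varieties isogenous to $A_1$ and $A_2$ — and $e(A_1,A_2)$ has this property by construction, with $e \mid s$ so that $A_i'[e] \subseteq A_i'[s]$. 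Thus the proof is essentially a transcription of the earlier argument with $e$ in place of $s$, the content being entirely contained in Lemma \ref{L:GE}.
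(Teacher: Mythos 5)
Your proof is correct and follows the paper's intended route exactly: the paper states this proposition with \verb|\qed| precisely because it is the earlier \cite{HoweLauter2003}*{Prop.~11} with $s(A_1,A_2)$ replaced by $e(A_1,A_2)$, and the argument you give — apply Lemma~\ref{L:GE} to $\Jac C$, use the principal polarization to force $\Delta$ to be a self-dual kernel of a polarization in both $A_1'[e]$ and $A_2'[e]$, conclude $\Delta = 0$ by hypothesis, and then contradict the indecomposability of a Jacobian's theta polarization — is the intended one.
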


And finally, \cite{HoweLauter2003}*{Prop.~13, p.~1689} becomes:

\begin{proposition}
\label{P:ec}
Suppose $C$ is a curve over a finite field $k$ whose Jacobian is 
isogenous to the product $A\times E$ of an abelian variety $A$ with an
elliptic curve $E$, where $e(A,E) < \infty$.  Then there is an elliptic
curve $E'$ isogenous to $E$ for which there is map from $C$ to $E'$ of
degree dividing $e(A,E)$, and we have $\#C(k) \le e(A,E) \cdot \#E(k)$.
\qed
\end{proposition}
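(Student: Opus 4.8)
The plan is to transcribe the proof of \cite{HoweLauter2003}*{Prop.~13}, substituting Lemma~\ref{L:GE} for \cite{HoweLauter2003}*{Lem.~7}; the only input that changes is that the quantity controlling the `gluing' is now $e(A,E)$ rather than the older invariant $s(A,E)$. Write $e = e(A,E)$ and let $\lambda_C$ be the canonical principal polarization of $\Jac C$. First I would apply Lemma~\ref{L:GE} with $A_1 = A$, $A_2 = E$, and $B = \Jac C$. This produces an abelian variety $A'$ isogenous to $A$, an elliptic curve $E'$ isogenous to $E$, and an exact sequence $0\to\Delta\to A'\times E'\to\Jac C\to 0$ in which the second projection embeds $\Delta$ into $E'[e]$; moreover, since $\Jac C$ is principally polarized, the pullback of $\lambda_C$ to $A'\times E'$ is a product polarization $\lambda_1\times\lambda_2$ and the second projection identifies $\Delta$ with $\ker\lambda_2$. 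Because $E'$ is an elliptic curve, $\lambda_2 = n\lambda_{E'}$ for the canonical principal polarization $\lambda_{E'}$ and some positive integer $n$, so $\ker\lambda_2 = E'[n]$; the chain $E'[n] = \ker\lambda_2 \cong \Delta \hookrightarrow E'[e]$ then forces $n\mid e$.

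Next I would produce the morphism of curves. Let $\iota\col E'\hookrightarrow\Jac C$ be the composite of $e'\mapsto(0,e')$ with the isogeny $A'\times E'\to\Jac C$; it is a monomorphism because $\Delta$ injects into the first factor and hence meets $\{0\}\times E'$ trivially. Restricting the product polarization to $\{0\}\times E'$ gives $\iota^*\lambda_C = \lambda_2 = n\lambda_{E'}$. Invoking the structure theory of Jacobians, the composite of the Abel--Jacobi embedding $C\hookrightarrow\Jac C$ with the surjection $\lambda_{E'}^{-1}\circ\iota^{\vee}\circ\lambda_C\col\Jac C\to E'$, followed by a translation, is a non-constant morphism $f\col C\to E'$ whose pullback $f^*\col\Jac E'\to\Jac C$ coincides with $\iota$ up to sign. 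For any non-constant $f$ the identities $f_*\circ f^* = [\deg f]$ and $\lambda_{E'}\circ f_* = (f^*)^{\vee}\circ\lambda_C$ (adjointness of norm and pullback with respect to the polarizations) combine to give $(f^*)^*\lambda_C = (\deg f)\,\lambda_{E'}$; comparing with $\iota^*\lambda_C = n\lambda_{E'}$ shows $\deg f = n$, a divisor of $e$.

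For the point count, every point of $C(k)$ lies over a point of $E'(k)$, and the fibre of $f$ over a point of $E'(k)$ is an effective $k$-rational divisor of degree $n$ on $C$, so it supports at most $n$ rational points. Hence $\#C(k)\le n\cdot\#E'(k)\le e\cdot\#E'(k)$, and since isogenous abelian varieties over a finite field have the same number of rational points we get $\#E'(k) = \#E(k)$ and therefore $\#C(k)\le e(A,E)\cdot\#E(k)$.

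The step I expect to be the main obstacle is the middle one: turning the algebraic datum ``$E'$ embeds in $\Jac C$ with $\lambda_C$ restricting to $n$ times the canonical polarization'' into an actual degree-$n$ map $C\to E'$. This needs the precise identification of the Abel--Jacobi pullback with $-\lambda_C^{-1}$, the duality between the norm map and the pullback of divisor classes, and their compatibility with polarizations, and it also needs the observation that the elliptic curve one lands on is exactly the $E'$ handed to us by Lemma~\ref{L:GE} and not merely some isogenous curve. Everything else reduces to bookkeeping with finite group schemes and polarization degrees, exactly as in \cite{HoweLauter2003}.
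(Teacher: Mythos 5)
Your proposal is correct and follows exactly the route the paper intends: the paper's proof of Proposition~\ref{P:ec} consists precisely of rerunning the proof of \cite{HoweLauter2003}*{Prop.~13} with $s(A,E)$ replaced by $e(A,E)$, using Lemma~\ref{L:GE} in place of the earlier Lem.~7, which is what you do. The details you supply (identifying $\ker\lambda_2$ with $E'[n]$, deducing $n\mid e$ from the embedding into $E'[e]$, and converting the embedding $E'\hookrightarrow\Jac C$ with $\iota^*\lambda_C=n\lambda_{E'}$ into a degree-$n$ map $C\to E'$ via the pullback--pushforward duality) are the content of Lemmas~\ref{L:CtoE} and~\ref{L:CtoD} in the paper, so your argument matches the paper's machinery step for step.
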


These results will only be more useful than their predecessors 
from~\cite{HoweLauter2003} if we can produce bounds on $e(A_1,A_2)$ 
that are better than the bound $e(A_1,A_2)\mid s(A_1,A_2)$ that 
follows from the proof of~\cite{HoweLauter2003}*{Lem.~7, p.~1684}. The
main result of this section, Proposition~\ref{P:RR} below, provides
such improved bounds.  To state the proposition, we must introduce the
idea of the \emph{reduced resultant} of two polynomials, together with
a result about its computation.

\begin{definition}[Pohst \cite{Pohst}*{p.~179}]
\label{D:RR}
The \emph{reduced resultant} of two polynomials $f$ and $g$ in $\ZZ[x]$
is the non-negative generator of the ideal $\ZZ\cap(f,g)$ of~$\ZZ$,
where $(f,g)$ is the ideal of $\ZZ[x]$ generated by $f$ and~$g$.
\end{definition}

Alternatively (but equivalently), one can define the reduced resultant 
of $f$ and $g$ to be the characteristic of the quotient ring 
$\ZZ[x]/(f,g)$.  The reduced resultant of $f$ and $g$ is $0$ if and
only if $f$ and $g$ are both divisible by a nonconstant polynomial; 
also, the reduced resultant divides the usual resultant, and is 
divisible by all of the prime divisors of the usual resultant.

Computing reduced resultants of monic elements of $\ZZ[x]$ is 
straightforward, as the following lemma shows.

\begin{lemma}
\label{L:computingRR}
Let $f_1$ and $f_2$ be coprime elements of $\ZZ[x]$, not both constant,
whose leading coefficients are coprime to one another, and let $n$ be 
the reduced resultant of $f_1$ and $f_2$.  
\begin{enumerate}
\item There are unique polynomials $b_1$ and $b_2$ in $\ZZ[x]$ such 
      that
\begin{itemize}
\item $\deg b_1 < \deg f_2$
\item $\deg b_2 < \deg f_1$
\item $n = b_1 f_1 + b_2 f_2$.
\end{itemize}
\item Let $a_1$ and $a_2$ be the unique elements of $\QQ[x]$ such that
\begin{itemize}
\item $\deg a_1 < \deg f_2$
\item $\deg a_2 < \deg f_1$
\item $1 = a_1 f_1 + a_2 f_2$.
\end{itemize}
Then the reduced resultant $n$ of $f_1$ and $f_2$ is the least common
multiple of the denominators of the coefficients of $a_1$ and~$a_2$,
and the polynomials $b_1$ and $b_2$ from statement~\textup{(1)} satisfy
$b_1 = na_1$ and $b_2 = na_2$.
\end{enumerate}
\end{lemma}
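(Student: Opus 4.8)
The plan is to prove statement~(1) first by exhibiting the relevant polynomials via the extended Euclidean algorithm over $\QQ$, then clearing denominators, and finally arguing uniqueness from a degree count; statement~(2) will then follow by tracking exactly how the clearing of denominators interacts with the definition of the reduced resultant. First I would establish that, since $f_1$ and $f_2$ are coprime in $\QQ[x]$, there is a unique pair $(a_1, a_2) \in \QQ[x]^2$ with $\deg a_1 < \deg f_2$, $\deg a_2 < \deg f_1$, and $a_1 f_1 + a_2 f_2 = 1$: existence is B\'ezout in the PID $\QQ[x]$ followed by reduction of $a_1$ modulo $f_2$ (absorbing the correction into $a_2$, whose degree then stays below $\deg f_1$ because $\deg(f_1 f_2) = \deg f_1 + \deg f_2$), and uniqueness is the standard argument: a difference of two such representations would give $a_1' f_1 = -a_2' f_2$ with $\deg a_1' < \deg f_2$, forcing $f_2 \mid a_1'$ and hence $a_1' = 0$.

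Next I would let $m$ be the least common multiple of the denominators of all coefficients of $a_1$ and $a_2$, so that $b_1 := m a_1$ and $b_2 := m a_2$ lie in $\ZZ[x]$ and satisfy $m = b_1 f_1 + b_2 f_2$ with the required degree bounds. This shows $m \in \ZZ \cap (f_1, f_2)$, so $n \mid m$. For the reverse divisibility, write $n = c_1 f_1 + c_2 f_2$ for some $c_1, c_2 \in \ZZ[x]$ (possible since $n$ generates $\ZZ \cap (f_1, f_2)$, which is nonzero because $f_1, f_2$ are coprime); dividing by $n$ gives $1 = (c_1/n) f_1 + (c_2/n) f_2$ in $\QQ[x]$, and after reducing $c_1/n$ modulo $f_2$ — here is where the hypothesis on leading coefficients enters, since I need the reduction to be performable over $\ZZ[1/n]$ rather than just over $\QQ$, i.e. the quotient of $c_1/n$ by $f_2$ should have denominators dividing $n$, which holds when the leading coefficient of $f_2$ is a unit times something coprole to... — more carefully: the coprimality of the leading coefficients guarantees that at least one of $f_1, f_2$, say $f_2$, can be used for polynomial division with the quotient having denominators only from the leading coefficient of $f_2$; combined with the integrality of $c_1, c_2$ and the fact that the final remainder must equal $a_1$ by uniqueness, one concludes that $n a_1 = b_1 \in \ZZ[x]$, hence $m \mid n$. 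Together with $n \mid m$ and both being non-negative, this yields $n = m$, proving the formula in~(2); and then $b_1 = m a_1 = n a_1$, $b_2 = n a_2$ as claimed, which in turn gives existence in~(1). Uniqueness in~(1) is immediate from uniqueness of $(a_1,a_2)$ in~(2): any integral solution $(b_1, b_2)$ to $n = b_1 f_1 + b_2 f_2$ with the degree bounds yields, upon dividing by $n$, a rational solution to $1 = (b_1/n)f_1 + (b_2/n)f_2$ with the same degree bounds, hence $b_i/n = a_i$.

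The main obstacle I anticipate is the step controlling denominators during polynomial division — specifically, showing that reducing $c_1/n$ modulo $f_2$ does not introduce denominators beyond those already forced, so that the resulting remainder is genuinely $n a_1$ with $a_1$ the canonical rational B\'ezout coefficient. This is exactly the place where the hypothesis that the leading coefficients of $f_1$ and $f_2$ are coprime is used: without it one could have a common prime $p$ dividing both leading coefficients, and division by $f_2$ over $\ZZ[1/n]$ might fail to terminate integrally, breaking the argument that $n \mid m$. The cleanest way to handle this is probably to do the division of $c_1$ (not $c_1/n$) by $f_2$ over $\ZZ[1/\ell]$, where $\ell$ is the leading coefficient of $f_2$, obtaining $c_1 = q f_2 + r$ with $q, r \in \ZZ[1/\ell][x]$ and $\deg r < \deg f_2$; substituting back shows $(r/n) f_1 \equiv (\text{something}) \pmod{f_2}$, and then a localization-at-each-prime argument — using coprimality of leading coefficients to know that for each prime $p$, at least one of $f_1, f_2$ stays monic-like mod $p$ — pins down the $p$-adic valuations of the coefficients of $a_1$ and shows they are bounded by the valuation of $n$. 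I would present this prime-by-prime rather than trying to do it all at once over $\ZZ$.
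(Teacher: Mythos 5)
Your overall strategy is sound, but it diverges from the paper's proof in the crucial existence step and, as written, leaves that step incomplete. You start from an arbitrary integral relation $n = c_1 f_1 + c_2 f_2$ and try to reduce $c_1$ modulo $f_2$ while controlling denominators. This can be made to work: dividing $c_1$ by $f_2$ over $\ZZ[1/\ell_2]$ (with $\ell_2$ the leading coefficient of $f_2$) and invoking uniqueness of $(a_1,a_2)$ gives $na_1, na_2 \in \ZZ[1/\ell_2][x]$, and doing the symmetric division of $c_2$ by $f_1$ gives $na_1, na_2 \in \ZZ[1/\ell_1][x]$; since $\ell_1$ and $\ell_2$ are coprime, $\ZZ[1/\ell_1] \cap \ZZ[1/\ell_2] = \ZZ$ and integrality follows. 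But you never actually carry this out — you trail off mid-sentence when the coprimality hypothesis first needs to bite, and then defer to an unexecuted ``prime-by-prime'' argument. That is the precise place where the lemma has content, so the proposal as it stands has a gap.

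The paper's proof is tighter and avoids the localization bookkeeping entirely. It works directly in $\ZZ[x]$ by successive degree reduction: starting from any $b_1, b_2 \in \ZZ[x]$ with $n = b_1 f_1 + b_2 f_2$, it observes that if $\deg b_1 \ge \deg f_2$ then (because the sum has degree $0$) the top-degree terms of $b_1 f_1$ and $b_2 f_2$ must cancel, so $u_1 v_1 + u_2 v_2 = 0$ where $u_i, v_i$ are the leading coefficients of $b_i, f_i$; coprimality of $v_1, v_2$ then forces $v_2 \mid u_1$ and $v_1 \mid u_2$, allowing integral subtraction of $(u_1/v_2)x^{d_1-\deg f_2}f_2$ from $b_1$ and $(u_2/v_1)x^{d_2-\deg f_1}f_1$ from $b_2$ to drop both degrees simultaneously. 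Iterating produces the canonical $b_1, b_2$ in finitely many steps, with all arithmetic in $\ZZ[x]$. After that, uniqueness and statement~(2) fall out exactly as in your sketch ($b_i = n a_i$ by uniqueness over $\QQ$, and $m \mid n$ and $n \mid m$ give $n = m$). The tradeoff: your route makes the ring-theoretic content (intersection of localizations) explicit, while the paper's degree-descent is more elementary and leaves nothing to fill in. If you keep your approach, you need to actually write out the two divisions and the intersection argument rather than waving at a prime-by-prime analysis.
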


\begin{proof}
First we prove that there exist polynomials $b_1$ and $b_2$ with the 
properties listed in statement~(1).  The unicity of these polynomials 
will follow from statement~(2).

Let $b_1$ and $b_2$ be arbitrary elements of $\ZZ[x]$ such that 
$n = b_1 f_1 + b_2 f_2$.  We will show that if $\deg b_1 \ge \deg f_2$ 
or $\deg b_2 \ge \deg f_1$ then there are elements $\bbar_1$ and 
$\bbar_2$ of $\ZZ[x]$, whose degrees are smaller than those of $b_1$ 
and $b_2$, such that $n = \bbar_1 f_1 + \bbar_2 f_2$.  By successively
replacing the $b$'s with the $\bbar$'s, we will find a pair $(b_1,b_2)$
of polynomials that satisfy the conditions in statement~(1).

Let $d_1$ and $d_2$ be the degrees of $b_1$ and $b_2$.  Since 
$b_1 f_1 + b_2 f_2$ has degree $0$, we see that 
\[ d_1 + \deg f_1 = d_2 + \deg f_2,\]
so that $d_1 \ge \deg f_2$ if and only if $d_2 \ge \deg f_1$.  For each
$i$ let $u_i$ be the leading coefficient of $b_i$ and let $v_i$ be the
leading coefficient of $f_i$.  Then we must have $u_1v_1 + u_2v_2 = 0$,
and since $v_1$ and $v_2$ are coprime to one another, $v_1$ divides 
$u_2$ and $v_2$ divides $u_1$.  If we set
\begin{align*}
\bbar_1 &= b_1 - (u_1/v_2) x^{d_1 - \deg f_2} f_2 \\
\bbar_2 &= b_2 - (u_2/v_1) x^{d_2 - \deg f_1} f_1 
\end{align*}
then $\bbar_1$ and $\bbar_2$ have the desired properties.

Let $a_1$ and $a_2$ be as in statement~(2).  The unicity of the $a$'s
shows that we must have $b_1 = n a_1$ and $b_2 = n a_2$, and it follows
that $n$ is a multiple of all of the denominators of the coefficients
of $a_1$ and~$a_2$.

On the other hand, if $m$ is the least common multiple of the
denominators of $a_1$ and~$a_2$, then $ma_1$ and $ma_2$ are elements
of $\ZZ[x]$, and the equality $m = (ma_1)f_1 + (ma_2) f_2$ shows that
$m$ is a multiple of $n$.  Therefore $n = m$, and the lemma is proved.
\end{proof}

We are almost ready to state Proposition~\ref{P:RR}.  Recall that the
\emph{Weil polynomial} of a $d$-dimensional abelian variety $A$ over a
finite field $\Fq$ is the characteristic polynomial $f$ of the
Frobenius endomorphism of $A$, and that the \emph{real Weil polynomial}
of $A$ is the unique polynomial $h\in\ZZ[x]$ such that 
$f(x) = x^d h(x + q/x).$  Recall also that the \emph{radical} of a 
polynomial is the product of its irreducible factors, each taken once.
The radical of the real Weil polynomial of an abelian variety over a
finite field is the minimal polynomial of the endomorphism $F+V$, where 
$F$ is Frobenius and $V$ is Verschiebung.

\begin{proposition}
\label{P:RR}
Let $A_1$ and $A_2$ be nonzero abelian varieties over a finite field 
$k$ with no isogeny factor in common.  Let $g_1$ and $g_2$ be the 
radicals of their real Weil polynomials, let $n$ be the reduced 
resultant of $g_1$ and $g_2$, and let $b_1$ and $b_2$ be the unique
elements of $\ZZ[x]$ such that $n = b_1 g_1 + b_2 g_2$ and 
$\deg b_1 < \deg g_2$ and $\deg b_2 < \deg g_1$.
\begin{enumerate}
\item The gluing exponent $e(A_1,A_2)$ divides $n$.
\item Let $g = g_1 g_2$, and suppose that $g$ is divisible by 
      $x^2 - 4q$, where $q = \# k$.  If the coefficients of the 
      polynomial $b_1 g_1 + x g / (x^2 - 4q)$ are all even, then 
      $n$ is even, and $e(A_1,A_2)$ divides $n/2$.
\end{enumerate}
\end{proposition}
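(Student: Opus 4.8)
The plan is to use the fact that the Frobenius $F$ and the Verschiebung $V$ act on every finite group scheme $\Delta$ over $k$ in a way that is intrinsic to $\Delta$ and compatible with any embedding of $\Delta$ into an abelian variety over $k$; in particular $\theta:=F+V$ operates on $\Delta$, and if $\Delta$ embeds into a variety isogenous to $A_i$ then $g_i(\theta)=0$ on $\Delta$, because $g_i$ is the minimal polynomial of $F+V$ on $A_i$ and this polynomial depends only on the isogeny class. Part~(1) then follows at once: if $\Delta$ embeds into a variety isogenous to $A_1$ and into one isogenous to $A_2$, then $g_1(\theta)=g_2(\theta)=0$ on $\Delta$, so evaluating the identity $n=b_1g_1+b_2g_2$ at $\theta$ gives $n\cdot\mathrm{id}_\Delta=b_1(\theta)g_1(\theta)+b_2(\theta)g_2(\theta)=0$; hence $n$ lies in the set $E$ of Definition~\ref{D:GE}, and $e(A_1,A_2)\mid n$.

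For part~(2) I would first show $n$ is even, by reducing $n=b_1g_1+b_2g_2$ modulo~$2$ and splitting into cases according to how $x^2-4q$ meets $g_1$ and $g_2$ (recall $\gcd(g_1,g_2)=1$). If $x^2-4q$ divides just one of $g_1,g_2$ — automatic when $q$ is not a perfect square, and otherwise meaning that both of $x\mp2\sqrt q$ divide the same $g_i$ — then, working in $\FF_2[x]$ and using the hypothesis that $b_1g_1+xg/(x^2-4q)$ has even coefficients, one gets $\bar g_2=x\,\bar b_1$, so $\bar n=\bar b_1\bigl(\bar g_1+x\bar b_2\bigr)$ is a constant multiple of $\bar b_1$; since $g_i$ is monic (so $\deg\bar g_i=\deg g_i$) and $x^2\mid\bar g_i$ for the relevant $g_i$, a one-line degree check forces $\bar n=0$. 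If instead $q$ is a perfect square and $x-2\sqrt q$, $x+2\sqrt q$ divide $g_1$ and $g_2$ in some order, then $\mathrm{Res}(g_1,g_2)$ is divisible by the integer $g_2(2\sqrt q)=4\sqrt q\cdot(\text{integer})$ (or its analogue with the indices swapped), hence by~$2$; by the remark following Definition~\ref{D:RR}, $2\mid n$.

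For the bound $e(A_1,A_2)\mid n/2$ itself, put $\tilde g:=g/(x^2-4q)\in\ZZ[x]$, so that $xg/(x^2-4q)=x\tilde g$, and set $s:=\tfrac12\bigl(b_1g_1+x\tilde g\bigr)$ and $s':=\tfrac12\bigl(b_2g_2+x\tilde g\bigr)$; both lie in $\ZZ[x]$, the first by hypothesis and the second because $b_2g_2+x\tilde g=n+2(s-b_1g_1)$ and $n$ is even. Let $\Delta$ embed into abelian varieties $A_1'$ and $A_2'$ isogenous to $A_1$ and~$A_2$. The crucial point is that on each $A_i'$ the subring $\ZZ[F,V]$ of $\End(A_i')$ is \emph{reduced} — it lies inside $Z(\End^0(A_i'))$, a product of number fields, since the Frobenius $\pi=F$ is central in the semisimple algebra $\End^0(A_i')$ — and also \emph{torsion-free}. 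Now $g_1(\theta)g_2(\theta)=(\theta^2-4q)\tilde g(\theta)$, and this equals $(F-V)^2\tilde g(\theta)$ because $FV=q$; since $g_i(\theta)=0$ on $A_i'$, the element $(F-V)\tilde g(\theta)$ of $\ZZ[F,V]$ has square $(F-V)^2\tilde g(\theta)^2=0$ on $A_i'$, hence vanishes there, and therefore $2F\tilde g(\theta)=\theta\tilde g(\theta)$ on $A_i'$. Combining this on $A_1'$ with $g_1(\theta)=0$ and the identity $b_1g_1+x\tilde g=2s$ gives $2F\tilde g(\theta)=2s(\theta)$, whence $F\tilde g(\theta)=s(\theta)$ by torsion-freeness; symmetrically $F\tilde g(\theta)=s'(\theta)$ on $A_2'$. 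Restricting both relations to $\Delta$ shows $s(\theta)=s'(\theta)$ on $\Delta$. But $s-s'=b_1g_1-n/2$ in $\ZZ[x]$ — just the relation $n=b_1g_1+b_2g_2$ rearranged — so evaluating at $\theta$ on $\Delta$, where $g_1(\theta)=0$, yields $0=s(\theta)-s'(\theta)=b_1(\theta)g_1(\theta)-\tfrac n2\,\mathrm{id}_\Delta=-\tfrac n2\,\mathrm{id}_\Delta$. Thus $(n/2)\Delta=0$ for every such $\Delta$, so $n/2\in E$ and $e(A_1,A_2)\mid n/2$.

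The step I expect to be the real obstacle is the foundational one that I have glossed over: checking that $F$ and $V$ genuinely act on the finite group scheme $\Delta$ and are restricted from the ambient abelian variety, and — the point on which the halving in part~(2) turns — arranging that the relations $(F-V)\tilde g(\theta)=0$ and $F\tilde g(\theta)=s(\theta)$ are established inside $\End(A_i')$ (in the reduced, torsion-free ring $\ZZ[F,V]$) rather than only in $\End(\Delta)$, which carries both nilpotents and torsion; the passage down to $\Delta$ must be the last move. Everything else — that $\tilde g,s,s'\in\ZZ[x]$, the polynomial identity $s-s'=b_1g_1-n/2$, and the finite case analysis for the parity of~$n$ — is routine.
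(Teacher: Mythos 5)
Your proof is correct, and it is worth comparing the two routes carefully. Part~(1) is essentially the paper's own argument (the paper packages it as Lemma~\ref{L:rings}/\ref{L:RR} applied to $\beta=(b_1g_1)(F+V)\in\ZZ[F,V]\subset\End(A_1\times A_2)$, while you apply the polynomial identity directly on $\Delta$; same idea). For part~(2) you take a genuinely different path. The paper never proves separately that $n$ is even; rather it builds a single element $s(F,V)\in\ZZ[F,V]\subset\End(A_1\times A_2)$ satisfying $2s(F,V)=(b_1g_1)(F+V)$, deduces $(F-V)h(F+V)=0$ by decomposing $\ZZ[F,V]\otimes\QQ$ into a product of fields $\prod L_i$ and checking the vanishing factor by factor, and then reads off \emph{both} the evenness of $n$ and the divisibility $e\mid n/2$ at once from $s(F,V)$ acting as $0$ on $A_1$ and as the integer $n/2$ on $A_2$, via Lemma~\ref{L:rings}. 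You instead (a) establish $2\mid n$ first by a case analysis in $\FF_2[x]$ (which is fine, though not quite the ``one-line degree check'' you advertise --- one also needs $x^2\mid\bar g_i$, and the subcase where $x^2-4q\mid g_2$ uses $x\mid\bar b_1$); (b) obtain the key vanishing $(F-V)\tilde g(\theta)=0$ on each $A_i'$ not from the field decomposition but from the slicker observation that its square is $g_1(\theta)g_2(\theta)\tilde g(\theta)=0$ and $\ZZ[F,V]$ sits in the reduced ring $Z(\End^0(A_i'))$; and (c) work with a pair of polynomials $s,s'\in\ZZ[x]$ on $A_1'$ and $A_2'$ separately and only at the end compare them on $\Delta$. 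Your reducedness/nilpotency argument is cleaner and more self-contained than the paper's explicit decomposition of $S_\QQ$ into $L$ and $L'$; the cost is that you must pay for the parity of $n$ up front with a separate mod-$2$ case analysis, whereas the paper gets that for free from the integrality of $s(F,V)$ and the fact that $\End A_2$ is $\ZZ$-torsion-free. Both proofs correctly isolate the same foundational point you flag --- that $F$ and $V$ act intrinsically on $\Delta$ and compatibly with any embedding, and that the relation killing $(F-V)\tilde g(\theta)$ must be established in the torsion-free, reduced ring $\ZZ[F,V]\subset\End A_i'$ before descending to $\End\Delta$.
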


We prove Proposition~\ref{P:RR} at the end of this section, after we
state and prove two lemmas.  Throughout the rest of this section, 
$A_1$ and $A_2$ will be abelian varieties as in the statement of the 
proposition, and $F$ and $V$ will be the Frobenius and Verschiebung 
endomorphisms of $A_1\times A_2$.

Our first lemma shows that we can find bounds on $e(A_1,A_2)$ by 
understanding the endomorphism ring of $A_1\times A_2$.

\begin{lemma}
\label{L:rings}
For each $i$ let $\varphi_i$ be the projection map 
$\End(A_1\times A_2)\to \End A_i$.  Suppose $\beta$ is an element of
the subring $\ZZ[F,V]$ of the center of $\End(A_1\times A_2)$ with the
property that $\varphi_1(\beta)=0$ and $\varphi_2(\beta)$ is an 
integer~$n$. Then the gluing exponent $e(A_1,A_2)$ is a divisor of~$n$.
\end{lemma}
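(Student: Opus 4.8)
The plan is to show that the integer $n = \varphi_2(\beta)$ lies in the set $E$ of Definition~\ref{D:GE}, which immediately gives $e(A_1,A_2) \mid n$ by the definition of the gluing exponent. So let $\Delta$ be a finite group scheme over $k$ that can be embedded in a variety $A_1'$ isogenous to $A_1$ and also in a variety $A_2'$ isogenous to $A_2$; I must show $n\Delta = 0$. The key observation is that $\beta$, being a polynomial in $F$ and $V$, makes sense as an endomorphism not just of $A_1\times A_2$ but of \emph{any} abelian variety over $k$, and in particular of $A_1'$ and of $A_2'$; moreover its formation is compatible with isogenies, since $F$ and $V$ commute with all homomorphisms of abelian varieties over $k$. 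Concretely, if $\psi\col A_1 \to A_1'$ is an isogeny, then $\psi\circ\beta_{A_1} = \beta_{A_1'}\circ\psi$, and similarly for $A_2$; here I write $\beta_X$ for the element of $\End X$ obtained by substituting the Frobenius and Verschiebung of $X$ into the integer polynomial defining $\beta$.

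First I would pin down what $\varphi_1(\beta) = 0$ and $\varphi_2(\beta) = n$ actually say: since $\End(A_1\times A_2)$ contains $\End A_1 \times \End A_2$ as the subring of ``diagonal'' endomorphisms, and $\beta$ lies in the center, $\beta$ acts on the factor $A_1$ as $\varphi_1(\beta)$ and on the factor $A_2$ as $\varphi_2(\beta)$; thus $\beta_{A_1} = 0$ in $\End A_1$ and $\beta_{A_2} = [n]$ in $\End A_2$. Because $\beta_{A_1}$ is a fixed polynomial expression in $F_{A_1}$ and $V_{A_1}$ that vanishes, the same polynomial expression in $F_{A_1'}$ and $V_{A_1'}$ also vanishes: this is because $F_{A_1'}$ and $V_{A_1'}$ satisfy the same relations — indeed $A_1$ and $A_1'$ being isogenous have the same Weil polynomial, hence the same $\ZZ[F,V]$ up to the canonical identification, so $\beta_{A_1'} = 0$ in $\End A_1'$. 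Likewise $\beta_{A_2'} = [n]$ in $\End A_2'$. (Alternatively, and perhaps more cleanly, one transports the identities across the isogenies $\psi$ directly: $\psi\circ\beta_{A_1} = \beta_{A_1'}\circ\psi = 0$ and $\psi$ an isogeny forces $\beta_{A_1'} = 0$, and similarly $\beta_{A_2'} = [n]$.)

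Now let $i_1\col \Delta \hookrightarrow A_1'$ and $i_2\col \Delta\hookrightarrow A_2'$ be the given embeddings. On one hand, $\beta_{A_1'}\circ i_1 = 0\circ i_1 = 0$; on the other hand $\beta_{A_1'}\circ i_1 = i_1\circ\beta_\Delta$, where $\beta_\Delta$ is the endomorphism of $\Delta$ obtained by restricting $\beta_{A_1'}$ — here I use that $\Delta$ is stable under $F$ and $V$ (a group scheme embedded in an abelian variety over $k$ is automatically stable under Frobenius and Verschiebung, which are defined functorially on all of $k$-schemes), so $\beta$ restricts to $\Delta$ and the restriction is computed using $F_\Delta,V_\Delta$, which agree with the restrictions of $F_{A_1'},V_{A_1'}$. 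Since $i_1$ is a monomorphism, $i_1\circ\beta_\Delta = 0$ gives $\beta_\Delta = 0$. Running the same argument through $i_2$ and $A_2'$ gives $i_2\circ\beta_\Delta = \beta_{A_2'}\circ i_2 = [n]\circ i_2 = i_2\circ[n]_\Delta$, so again by monicity of $i_2$ we get $\beta_\Delta = [n]_\Delta$. Combining, $[n]_\Delta = \beta_\Delta = 0$, i.e. $n\Delta = 0$, which is what we needed. Hence $n\in E$ and $e(A_1,A_2)\mid n$.

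The one point that deserves care — and which I expect is the main obstacle to writing this out fully rigorously — is the claim that $\beta$ ``restricts'' to $\Delta$ and that this restriction is computed by the \emph{same} polynomial in the Frobenius and Verschiebung of $\Delta$. The cleanest way around any subtlety is to avoid restricting to $\Delta$ altogether and instead argue entirely with the isogenies and embeddings as displayed above: the identities $\beta_{A_1'} = 0$ and $\beta_{A_2'} = [n]$ in $\End A_1'$ and $\End A_2'$ are all that is really used, together with $\beta_{A_2'}\circ i_2 = i_2\circ(\text{multiplication by }n)$ as maps $\Delta\to A_2'$, which follows because $[n]_{A_2'}\circ i_2 = i_2\circ[n]_\Delta$ holds for the trivial reason that multiplication by $n$ is defined on every commutative group scheme and $i_2$ is a group-scheme homomorphism. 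Then $0 = \beta_{A_1'}\circ i_1$ must be compared with $i_2\circ[n]_\Delta$; to do this one transports along an isogeny $A_1'\to A_2'$... — but there need not be one, so in fact the restriction-to-$\Delta$ formulation is the right one, and the needed functoriality of $F$ and $V$ on $\Delta$ is genuinely the crux. I would therefore state explicitly, perhaps as a preliminary observation, that for any finite group scheme $\Delta$ over $k$ and any closed immersion $\Delta\hookrightarrow A$ of group schemes into an abelian variety, the Frobenius and Verschiebung of $A$ carry $\Delta$ into itself and induce on $\Delta$ its own (relative) Frobenius and Verschiebung; granting this, any $\beta\in\ZZ[F,V]\subset\End A$ preserves $\Delta$ and acts there through the corresponding element of $\ZZ[F_\Delta,V_\Delta]$, and the argument of the previous paragraph goes through verbatim.
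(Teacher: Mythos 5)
Your proof is correct and follows essentially the same route as the paper's: observe that $\beta$, being a polynomial in $F$ and $V$, transports to any variety isogenous to $A_1$ or $A_2$ and restricts compatibly to $\Delta$ via the two embeddings, forcing $\beta$ to act on $\Delta$ as both $0$ and $n$. The functoriality point you flag at the end (that Frobenius and Verschiebung act on $\Delta$ compatibly with closed immersions into an abelian variety) is indeed the crux, and the paper states it just as tersely as you do, so there is no gap relative to the published argument.
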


\begin{proof}
Suppose $A_1'$ and $A_2'$ are abelian varieties over $k$ that are
isogenous to $A_1$ and~$A_2$, respectively.  Note that $F$ and $V$
are endomorphisms of $A_1'\times A_2'$, and that every isogeny from
$A_1'\times A_2'$ to $A_1\times A_2$ respects the actions of $F$ 
and~$V$.  Therefore $\beta$ can be viewed as an endomorphism of 
$A_1'\times A_2'$, and the projections of $\beta$ to $\End A_1'$ 
and to $\End A_2'$ are $0$ and $n$, respectively.

Suppose $\Delta$ is a finite group scheme over $k$ for which there are 
monomorphisms $\Delta\hookrightarrow A_1'$ and 
$\Delta\hookrightarrow A_2'$ for some $A_1'$ and $A_2'$ isogenous to 
$A_1$ and $A_2$.  Frobenius and Verschiebung also act on~$\Delta$, and 
the existence of a monomorphism from $\Delta$ to $A_1'$ shows that 
$\beta$ acts as $0$ on $\Delta$.  But the existence of a monomorphism
from $\Delta$ to $A_2'$ shows that $\beta$ acts as $n$ on $\Delta$.
Therefore $\Delta$ is killed by $n$, and $e(A_1,A_2)$ is a divisor 
of~$n$.
\end{proof}

\begin{lemma}
\label{L:RR}
Let $\alpha$ be an element of the subring $\ZZ[F,V]$ of
$\End(A_1\times A_2)$ and let $g_1$ and $g_2$ be the minimal 
polynomials of $\alpha$ restricted to $A_1$ and $A_2$, respectively.
Then the gluing exponent $e(A_1,A_2)$ is a divisor of the reduced 
resultant of $g_1$ and~$g_2$.
\end{lemma}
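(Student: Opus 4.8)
The plan is to deduce this from Lemma~\ref{L:rings} by exhibiting a suitable element $\beta$ of $\ZZ[F,V]$. Write $n$ for the reduced resultant of $g_1$ and $g_2$. By Definition~\ref{D:RR}, $n$ lies in the ideal $(g_1,g_2)$ of $\ZZ[x]$, so there are polynomials $b_1,b_2\in\ZZ[x]$ with $n = b_1 g_1 + b_2 g_2$ (Lemma~\ref{L:computingRR} shows these can be chosen with controlled degrees, but that refinement is not needed here). I would then set $\beta := b_1(\alpha)\,g_1(\alpha)$. Since $\alpha\in\ZZ[F,V]$ and $b_1,g_1$ have integer coefficients, $\beta$ again lies in $\ZZ[F,V]$.

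The next step is to compute the two projections $\varphi_i(\beta)$ to $\End A_i$. The element $\alpha$ lies in the block-diagonal subring $\End A_1\times \End A_2$ of $\End(A_1\times A_2)$ — that is, $\varphi_i(\alpha)=\alpha|_{A_i}$, and $\alpha$ is determined by this pair of restrictions — and the same is therefore true of the polynomial $\beta$ in $\alpha$; on this subring each $\varphi_i$ is a genuine ring homomorphism, so $\varphi_i(\beta) = b_1(\alpha|_{A_i})\,g_1(\alpha|_{A_i})$. Since $g_1$ is the minimal polynomial of $\alpha|_{A_1}$, we get $\varphi_1(\beta)=b_1(\alpha|_{A_1})\cdot 0 = 0$. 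Since $g_2$ is the minimal polynomial of $\alpha|_{A_2}$, evaluating the identity $n = b_1 g_1 + b_2 g_2$ at $\alpha|_{A_2}$ gives $\varphi_2(\beta) = b_1(\alpha|_{A_2})\,g_1(\alpha|_{A_2}) = n - b_2(\alpha|_{A_2})\cdot 0 = n$, which is an integer. Lemma~\ref{L:rings}, applied to this $\beta$, then yields $e(A_1,A_2)\mid n$, as desired.

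The only point that needs a little care is the multiplicativity invoked in the previous paragraph: the map $\End(A_1\times A_2)\to\End A_i$ extracting a diagonal block is not in general a ring homomorphism, so one must observe that $\beta$ actually lies in the subring $\ZZ[F,V]\subseteq\End A_1\times\End A_2$, where the projections do respect products. Everything else is a one-line manipulation of the relation $n = b_1 g_1 + b_2 g_2$, so I do not anticipate any genuine obstacle. (In the degenerate case $n=0$ — which happens precisely when $g_1$ and $g_2$ share a nonconstant common factor — the conclusion ``$e(A_1,A_2)$ divides $0$'' holds trivially, and there is nothing to prove.)
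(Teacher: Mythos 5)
Your proof is correct and follows essentially the same route as the paper's: take $n=b_1g_1+b_2g_2$, set $\beta=(b_1g_1)(\alpha)$, observe that $\beta$ acts as $0$ on $A_1$ and as $n$ on $A_2$, and conclude via Lemma~\ref{L:rings}. The extra remark about why the projections respect products (namely that $\ZZ[F,V]$ sits block-diagonally) is a sensible clarification but does not change the argument.
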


\begin{proof}
Let $n$ be the reduced resultant of $g_1$ and $g_2$, so that there are
elements $b_1$ and $b_2$ of $\ZZ[x]$ such that $n = b_1 g_1 + b_2 g_2$.
Let $\beta = (b_1 g_1)(\alpha)$.  Then $\beta$ acts as $0$ on~$A_1$, 
because it is a multiple of $g_1(\alpha)$, and it acts as $n$ on~$A_2$,
because $n-\beta = (b_2 g_2)(\alpha)$ is a multiple of $g_2(\alpha)$.
It follows from Lemma~\ref{L:rings} that $e(A_1,A_2)$ divides $n$.
\end{proof}

\begin{proof}[Proof of Proposition~\textup{\ref{P:RR}}]
Statement (1) follows from Lemma~\ref{L:RR} and the fact that $g_1$ 
and $g_2$ are the minimal polynomials of $F+V$ restricted to $A_1$ 
and~$A_2$, respectively.

Suppose $g$ is divisible by $x^2 - 4q$, say $g = (x^2 - 4q) h$.  Let
$S$ be the subring $\ZZ[F,V]$ of the center of $\End(A_1\times A_2)$, and let $R$ be 
the subring $\ZZ[F+V]$ of~$S$. The tensor product $R_\QQ = R\otimes\QQ$
is a product of fields~$K_i$, with each $K_i$ corresponding to an 
irreducible factor of $g$, and the tensor product $S_\QQ = S\otimes\QQ$
is a product of fields $L_i$, with each $L_i$ being an extension 
of~$K_i$, of degree $1$ if the corresponding factor of $g$ divides 
$x^2-4q$, and of degree $2$ otherwise.  Let $K$ be the product of the 
$K_i$ corresponding to factors of $g$ that do not divide $x^2 - 4q$, 
let $K'$ be the product of the remaining~$K_i$, and let $L$ and $L'$ be
the products of the corresponding $L_i$.  Note that on each factor 
$L_i$ of $L'$ we have $F = V$.

Suppose the coefficients of the polynomials $r = b_1g_1 + x h$ are 
all even.  Consider the polynomial in $\ZZ[u,v]$ obtained by evaluating
$r$ at $u+v$.  Its coefficients are also all even, so the same is true
of the polynomial
\[b_1(u+v) g_1(u+v) + (u - v) h(u + v).\]
Let $s$ be $1/2$ times this polynomial, so that $s$ lies in $\ZZ[u,v]$,
and $s(F,V)$ lies in $S$.

Consider the element $(F - V) h(F+V)$ of $S_\QQ$.  On each factor $L_i$
of~$L$, this element is $0$, because $h(F+V) = 0$ on each factor $K_i$
of~$K$.  But on each factor $L_i$ of $L'$, we have $F - V = 0$.  Thus,
$(F - V) h(F+V) = 0$.

We see that $2s(F,V)$ differs from $b_1(F+V) g_1(F+V)$ by~$0$, so 
$2s(F,V)$ is equal to $0$ in $\End A_1$ and is equal to $n$ 
in~$\End A_2$.  Since $s(F,V)$ lies in $\End(A_1\times A_2)$, it 
follows that $n$ is even, and from Lemma~\ref{L:rings} we see that the
gluing exponent of $A_1$ and $A_2$ divides $n/2$.
\end{proof}

\section{Supersingular factors in the Jacobian}
\label{S:supersingular}

We say that a Weil polynomial or a real Weil polynomial is 
\emph{ordinary} if the corresponding isogeny class consists of ordinary
abelian varieties.  In this section we will prove the following theorem,
which generalizes~\cite{HoweLauter2003}*{Cor.~12, p.~1689}.

\begin{theorem}
\label{T:SSfactor}
Suppose $h\in\ZZ[x]$ is the real Weil polynomial of an isogeny class of
abelian varieties over a finite field $k$, where $q = \#k$ is a square.
Suppose further that $h$ can be written $h = (x - 2s)^n h_0$, where 
$s^2 = q$, where $n>0$, and where $h_0$ is a nonconstant ordinary real
Weil polynomial such that the integer $h_0(2s)$ is squarefree.  Then 
there is no curve over $k$ with real Weil polynomial equal to $h$.
\end{theorem}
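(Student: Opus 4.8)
The plan is to assume that some curve $C/k$ has real Weil polynomial $h$ and derive a contradiction. Then $\Jac C$ is isogenous to $E^n\times A$, where $E$ is the supersingular elliptic curve whose real Weil polynomial is $x-2s$ --- so the Frobenius of $E$ is the integer $[s]$ and all endomorphisms of $E$ are defined over $k$ --- and where $A$ is an ordinary abelian variety of dimension $d:=\deg h_0$ with real Weil polynomial $h_0$. The idea is to show that in \emph{any} realization of $\Jac C$ as a quotient of a product $B_1\times B_2$ with $B_1\sim E^n$ and $B_2\sim A$, the finite group scheme being ``glued'' is trivial; it then follows that $(\Jac C,\Theta)$ is a nontrivial product of principally polarized abelian varieties, which is impossible because the theta divisor of a smooth curve is irreducible.

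First I would bound the gluing exponent $e:=e(E^n,A)$, which is finite because $E^n$ and $A$ share no isogeny factor. Writing $g_0$ for the radical of $h_0$, Proposition~\ref{P:RR}(1) shows that $e$ divides the reduced resultant of $x-2s$ and $g_0$, hence divides $\operatorname{Res}(x-2s,g_0)=\pm g_0(2s)$, hence divides $h_0(2s)$; so $e$ is squarefree. Moreover $e$ is prime to $p$: from $f_0(x)=x^dh_0(x+q/x)$ one gets $s^dh_0(2s)=f_0(s)$, and then the functional equation $x^{2d}f_0(q/x)=q^df_0(x)$ together with $p\mid s$ gives $h_0(2s)\equiv a_d\pmod p$, where $a_d$ is the coefficient of $x^d$ in $f_0$; since $A$ is ordinary, $p\nmid a_d$. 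Next, as $\Jac C$ is principally polarized, Lemma~\ref{L:GE} provides $B_1\sim E^n$, $B_2\sim A$, an exact sequence $0\to\Delta\to B_1\times B_2\to\Jac C\to0$ with $\Delta$ embedded in $B_1[e]$ and in $B_2[e]$, and polarizations $\lambda_1,\lambda_2$ of $B_1,B_2$ with $\Delta\cong\ker\lambda_1\cong\ker\lambda_2$ whose product pulls back to the canonical polarization of $\Jac C$. Since $e$ is squarefree and prime to $p$, we have $\Delta=\bigoplus_{\ell\mid e}\Delta[\ell]$ with each $\Delta[\ell]$ an \'etale $\FF_\ell$-vector-space scheme carrying (via the commutator pairing on $\ker\lambda_1$) a non-degenerate alternating form, so $\dim_{\FF_\ell}\Delta[\ell]$ is even.

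The crux is a local computation showing $\Delta[\ell]=0$ for each $\ell\mid e$. Because $B_1\sim E^n$ and the Frobenius of $E^n$ is $[s]$, the Frobenius of $B_1$ is $[s]$, so $F$ acts on $\Delta[\ell]\hookrightarrow B_1[\ell]$ as the scalar $\bar s$; hence $\Delta[\ell]\subseteq W_\ell:=\ker(F-\bar s\mid B_2[\ell])$, and I claim $\dim_{\FF_\ell}W_\ell=1$. Let $\pi$ be the Frobenius of $B_2$ inside its CM algebra $K=\QQ[\pi]$; then $N_{K/\QQ}(\pi-s)$ has the same $\ell$-adic valuation as $f_0(s)=s^dh_0(2s)$, namely $1$ (as $\ell\neq p$ and $h_0(2s)$ is squarefree). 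Writing $K\otimes\QQ_\ell=\prod_{\mathfrak l\mid\ell}K_{\mathfrak l}$ and noting that $v_{\mathfrak l}(\pi-s)>0$ exactly at the primes where $\pi\equiv s$, the identity $\sum_{\mathfrak l}f(\mathfrak l/\ell)\,v_{\mathfrak l}(\pi-s)=1$ forces a \emph{single} such prime $\mathfrak l$, with $f(\mathfrak l/\ell)=1$ and $v_{\mathfrak l}(\pi-s)=1$. So $\pi-s$ is a uniformizer at $\mathfrak l$; consequently $\ZZ_\ell[\pi]$ is maximal at $\mathfrak l$, the $\mathfrak l$-part of the Tate module of $B_2$ is free of rank one over $\calO_{K,\mathfrak l}$, and $W_\ell\cong\ker\bigl((\pi-s)\mid\calO_{K,\mathfrak l}/\ell\calO_{K,\mathfrak l}\bigr)=\mathfrak l^{e(\mathfrak l/\ell)-1}/\mathfrak l^{e(\mathfrak l/\ell)}$, which is one-dimensional over $\FF_\ell$. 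Hence $\dim_{\FF_\ell}\Delta[\ell]\le1$, and being even it is $0$.

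Therefore $\Delta=0$, so $B_1\times B_2\to\Jac C$ is an isomorphism and the canonical polarization of $\Jac C$ equals $\lambda_1\times\lambda_2$, which forces $\lambda_1$ and $\lambda_2$ to be principal. Since $n>0$ and $h_0$ is nonconstant, both $B_1$ and $B_2$ are nonzero, so $(\Jac C,\Theta)$ is a nontrivial product of principally polarized abelian varieties --- contradicting the irreducibility of the theta divisor of a smooth curve. The step I expect to be the main obstacle is the local claim $\dim_{\FF_\ell}W_\ell=1$; the key point making it work is that squarefreeness of $h_0(2s)$ forces $\pi-s$ to be a uniformizer, which localizes the computation at $\mathfrak l$ and renders it insensitive to which representative $B_2$ of the isogeny class of $A$ is produced by Lemma~\ref{L:GE}. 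The remaining steps are routine manipulation of reduced resultants, functional equations, and the gluing lemma.
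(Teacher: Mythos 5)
Your proof is correct, and it replaces the paper's central technical step with a genuinely different argument, so a brief comparison is in order. Where the paper proves a standalone lemma about $\ell$-torsion group schemes (Lemma~\ref{L:F=V}) by working inside the order $R = \ZZ[F,V]$ at the prime $\gothp = (\ell, F-s, V-s)$, showing that $\ell^2 \nmid h_0(2s)$ forces $R_\gothp$ to be a regular local ring and then invoking Deligne's equivalence to identify $A[\ell]$ with $\gothA/\ell\gothA$, you instead work directly with the $\ell$-adic Tate module of the specific $B_2$ furnished by Lemma~\ref{L:GE}. Your key computation $v_\ell\bigl(N_{K/\QQ}(\pi - s)\bigr) = 1$, forcing $\pi - s$ to be a uniformizer at a unique prime $\mathfrak{l}$ with residue degree $1$, is the mirror image of the paper's regularity argument: both are showing that the local ring at the relevant prime is a DVR, and hence that the eigenspace $\ker(F - s \mid B_2[\ell])$ is one-dimensional. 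Your route is arguably more elementary, as it avoids Deligne modules entirely (Tate modules suffice once you observe $\ell \neq p$, which you correctly deduce from ordinariness of $A$), at the cost of needing to track how the repeated factors of $h_0$ contribute only $\ell$-units (you note this implicitly — that squarefreeness of $h_0(2s)$ forces $e_1 = 1$ for the one relevant irreducible factor, so the $\mathfrak{l}$-part of the Tate module has rank one — but it deserves a sentence). You also conclude by appealing directly to the irreducibility of the theta divisor rather than routing through Proposition~\ref{P:general}; these are equivalent, as that proposition's proof rests on the same nondecomposability fact. One small economy available in both proofs, used by the paper: the parity of $\dim_{\FF_\ell}\Delta[\ell]$ follows already from $\Delta$ having square order (as the kernel of a polarization), without needing to invoke étaleness and the alternating form on $\Delta[\ell]$.
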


The heart of the proof is a lemma about finite group schemes over
finite fields.

\begin{lemma}
\label{L:F=V}
Suppose $\Delta$ is a finite $\ell$-torsion group scheme over a finite
field $k$ with $\#\Delta > \ell$, and suppose the Frobenius and
Verschiebung endomorphisms of $\Delta$ act as multiplication by an
integer $s$ with $s^2 = q$.  If $\Delta$ can be embedded in an ordinary
abelian variety $A$ over~$k$ with real Weil polynomial $h_0$, then the
integer $h_0(2s)$ is divisible by~$\ell^2$.
\end{lemma}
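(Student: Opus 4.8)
The plan is to exploit that the Frobenius endomorphism $F$ of $A$ restricts on the subgroup scheme $\Delta$ to multiplication by $s$, so that $\Delta$ lies in the kernel of the endomorphism $F-s$ of $A$. The order of $\ker(F-s)$ --- equivalently, the degree of the isogeny $F-s$ --- can be read off from the Weil polynomial of $A$, and turns out to be $\pm s^{d}h_{0}(2s)$, where $d=\dim A$. Since $\Delta$ is killed by the prime $\ell$, its order is a power of $\ell$, so $\#\Delta>\ell$ forces $\#\Delta\ge\ell^{2}$; and since $\Delta$ is $\ell$-torsion it lies inside the $\ell$-primary part of $\ker(F-s)$, whose order is the $\ell$-part of $\deg(F-s)$. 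Once we know $\ell\nmid s$, that $\ell$-part is $\ell^{\,v_{\ell}(h_{0}(2s))}$, and the chain $\ell^{2}\le\#\Delta\le\ell^{\,v_{\ell}(h_{0}(2s))}$ gives precisely $\ell^{2}\mid h_{0}(2s)$.

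To carry this out, I would first use the defining identity $f(x)=x^{d}h_{0}(x+q/x)$ between the Weil polynomial $f$ of $A$ and $h_{0}$; it yields $f(s)=s^{d}h_{0}(2s)$ since $q/s=s$. If $h_{0}(2s)=0$ there is nothing to prove, so assume not; then $s$ is not a root of $f$ and $F-s$ is an isogeny. Writing $f(x)=\prod_{i=1}^{d}(x-\pi_{i})(x-\bar\pi_{i})$ with $\pi_{i}\bar\pi_{i}=q$ and with $r_{i}:=\pi_{i}+\bar\pi_{i}$ the roots of $h_{0}$, and using $s^{2}=q$, a short computation gives
\[
  \deg(F-s)=\prod_{i=1}^{d}(\pi_{i}-s)(\bar\pi_{i}-s)
           =\prod_{i=1}^{d}(q-s r_{i}+s^{2})
           =\prod_{i=1}^{d}s\,(2s-r_{i})=s^{d}h_{0}(2s).
\]
Now assume $\ell$ is distinct from the characteristic $p$ of $k$. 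Writing $q=p^{2m}$, we have $s=\pm p^{m}$ with $m\ge1$, so $\ell\nmid s$; hence the $\ell$-part of $\deg(F-s)=|s|^{d}\,|h_{0}(2s)|$ equals $\ell^{\,v_{\ell}(h_{0}(2s))}$. The $\ell$-primary part of $\ker(F-s)$ is a finite group scheme of exactly this order and it contains $\Delta$, so $\ell^{2}\le\#\Delta\le\ell^{\,v_{\ell}(h_{0}(2s))}$ and $\ell^{2}\mid h_{0}(2s)$.

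It remains to dispose of the case $\ell=p$, where I would show the hypotheses are contradictory. Since $s=\pm p^{m}$ with $m\ge1$, we have $s\equiv0\pmod{\ell}$, so $F$ and $V$ both act as $0$ on the $\ell$-torsion group scheme $\Delta$; hence $\Delta$ is connected and embeds into the connected part $A[p]^{0}$ of $A[p]$, which for an ordinary $A$ is of multiplicative type. But Verschiebung acts injectively on such a group scheme, hence on the subgroup scheme $\Delta$, which is impossible unless $\Delta=0$ --- contradicting $\#\Delta>\ell$. The step I expect to need the most care is the passage from $A[\ell]$ to the full $\ell$-power torsion of $A$: a naive version that merely places $\Delta$ inside $\ker(F-s)\cap A[\ell]$ and counts the multiplicity of $s$ in $f\bmod\ell$ proves only $\ell\mid h_{0}(2s)$, since the two copies of $s$ that would be forced among the roots of $f$ correspond to just one copy of $2s$ among the roots of $h_{0}$; the extra factor of $\ell$ is visible only in the degree of the isogeny $F-s$, i.e.\ in $v_{\ell}(h_{0}(2s))$ rather than in $h_{0}(2s)\bmod\ell$. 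A minor routine point along the way is to confirm that the Frobenius and Verschiebung of $\Delta$ coincide with the restrictions to $\Delta$ of those of $A$, which is what justifies the inclusion $\Delta\subseteq\ker(F-s)$.
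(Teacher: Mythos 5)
Your proof is correct, and it takes a genuinely different and substantially simpler route than the paper's. The paper first reduces to simple $A$, passes to Deligne modules, forms the ideal $\gothp = (\ell, F-s, V-s)$ of $\ZZ[F,V]$, and shows that if $\ell^2 \nmid h_0(2s)$ then $R_\gothp$ is a regular local ring (hence a DVR); it then uses this to argue that the $\gothp$-primary part of $A[\ell]$ is a cyclic $R_\gothp$-module whose $\gothp$-torsion has order exactly $\ell$, forcing $\#\Delta \le \ell$. You bypass all of that by observing that $\Delta$ sits in the $\ell$-primary part of $\ker(F-s)$, whose order is the $\ell$-part of $\deg(F-s) = s^d h_0(2s)$; once $\ell \nmid s$ is established, the inequality $\ell^2 \le \#\Delta$ reads off $\ell^2 \mid h_0(2s)$ directly from the order of the isogeny kernel, with no module theory, no reduction to the simple case, and no local ring analysis. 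The trade-off is that the paper's argument extracts finer structural information (the $\gothp$-primary part of $A[\ell]$ is cyclic over a DVR), whereas yours only uses the cruder order count — but the order count is exactly what the lemma needs, so your route is more economical.

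Two small remarks on the $\ell = p$ case, which both arguments must rule out. You do so via the structure of $A[p]$ for ordinary $A$ (no $p$-torsion subgroup scheme of multiplicative type can have vanishing Verschiebung), which is clean and correct. The paper instead observes that $\gothp$ would contain both $F$ and $V$, which are coprime in $\ZZ[F,V]$ for ordinary $A$ by a cited lemma, so $\gothp$ would be the unit ideal and $\Delta$ could not exist — a different mechanism reaching the same contradiction. Also, your remark that ``if $h_0(2s)=0$ there is nothing to prove'' is harmlessly weaker than what is true: for ordinary $A$ no Frobenius eigenvalue equals $\pm\sqrt q$, so $h_0(2s) \ne 0$ automatically and $F - s$ is always an isogeny; you could have asserted this rather than treating it as a possible degenerate case.
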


\begin{proof}
First we prove the lemma under the additional assumption that $A$ is 
simple.

Let $R$ be the subring $\ZZ[F,V]$ of $\End A$, where $F$ and $V$ are
the Frobenius and Verschiebung endomorphisms.  Since $A$ is ordinary
and simple, the ring $R$ is an order in a CM-field $K$ whose degree
over $\QQ$ is twice the dimension $d$ of~$A$. If we define $R^+$ to be
the subring $\ZZ[F+V]$ of~$R$, then $R^+$ is an order in the maximal
real subfield $K^+$ of~$K$. (These facts follow, for instance, from 
the Honda--Tate theorem~\cite{Tate1968}*{Th\'eor\`eme~1, p.~96}.)  It is
easy to see that the elements
\[ 1, \ F, \ V, \ F^2, \ V^2, \ \ldots, \ F^{d-1}, \ V^{d-1}, \ F^d \]
form a basis for $R$ as a $\ZZ$-module.

Let $\gothp$ be the ideal $(\ell, F - s, V - s)$ of $R$.  We will show
that $\gothp$ is a prime ideal, and that if $h_0(2s)$ is not divisible
by $\ell^2$ then the localization $R_\gothp$ is a regular local ring. 
We will accomplish this by analyzing the rings $R/\gothp$ and 
$R/\gothp^2$.

Let $F'$ and $V'$ be the Frobenius and Verschiebung endomorphisms of
$\Delta$, and choose an embedding $\iota$ of $\Delta$ into~$A$.  The
embedding $\iota$ gives us a homomorphism from $R$ to $\End \Delta$ 
that sends $F$ to $F' = s$ and $V$ to $V' = s$, and clearly $\gothp$ is
contained in the kernel of this homomorphism. Therefore $\gothp$ is not
the unit ideal.  (We note that the fact that $\gothp$ is not the unit
ideal tells us that $s$ is coprime to $\ell$; for otherwise $\gothp$ 
would contain $F$ and $V$, and $F$ and $V$ are coprime in $R$ because
$A$ is ordinary --- see~\cite{Howe1995}*{Lem.~4.12, p.~2372}.)

On the other hand, since $F\equiv V \equiv s \bmod \gothp$, we see that
every power of $F$ and $V$ is congruent to an integer modulo $\gothp$,
so every element of $R$ is congruent to an integer modulo $\gothp$.
Furthermore, since $\ell\in\gothp$, we find that every element of $R$
is congruent modulo $\gothp$ to a nonnegative integer less than~$\ell$.
This shows that $R/\gothp \cong \FF_\ell$, so $\gothp$ is prime.

Now we analyze the ring $R/\gothp^2$. Note that $\gothp^2$ contains
$(F-s)^2 = F^2 - 2sF + q$ and $(V-s)^2 = V^2 - 2sV + q$.  Using 
multiples of these elements to eliminate higher powers of $F$ and~$V$,
we see that every element of $R/\gothp^2$ can be represented by an 
element of the form $a F + b V + c$.  Using the fact that $\gothp^2$
contains $\ell^2$, $\ell(F-s)$, and $\ell(V-s)$, we see that every 
element of $R/\gothp^2$ can be represented by an element of this form
with the further restriction that $a, b$, and $c$ are nonnegative 
integers with $a,b<\ell$ and $c<\ell^2$.

Now, $\gothp^2$ also contains $(F-s)(V-s) = 2q - (F+v)s = s(2s-(F+V))$,
and since $s$ is coprime to $\ell$ and hence not in $\gothp$, we see 
that $\gothp^2$ contains $F+V-2s$.  This shows that we can find 
representatives as above for which $b=0$.

Suppose, to obtain a contradiction, that $h_0(2s)$ is not divisible 
by~$\ell^2$.  Since $F+V \equiv 2s \bmod \gothp^2$, we have 
$0 = h_0(F+V) \equiv h_0(2s) \bmod \gothp^2$.  This can only happen if
$h_0(2s)$ is divisible by $\ell$, and then our assumption that 
$h_0(2s)$ is not divisible by $\ell^2$ implies that $\gothp^2$ 
contains~$\ell$. This means that every element of $R/\gothp^2$ has a 
representative of the form $aF + c$ where $a$ and $c$ are both 
non-negative integers less than $\ell$.  In particular, 
$\gothp/\gothp^2$ is a $1$-dimensional $R/\gothp$-vector space, and so
$R_\gothp$ is a regular local ring.

Deligne~\cite{Deligne1969} proved a result that implies that there is 
an equivalence of categories between the category of abelian varieties 
over $k$ that are isogenous to $A$ and the category of nonzero 
finitely-generated $R$-submodules of~$K$.  This equivalence of 
categories is fleshed out in~\cite{Howe1995}; the only result we will
use in our argument is that if our variety $A$ corresponds to the 
isomorphism class of an $R$-module $\gothA\subset K$, then the 
$R$-modules $A[\ell]$ and $\gothA/\ell\gothA$ are isomorphic to one 
another. 

The image of $\Delta$ in $A$ must sit inside the largest subgroup of 
$A[\ell]$ on which $F$ and $V$ acts as $s$, so we would like to analyze
the $\gothp$-primary part of $A[\ell]$.  This $\gothp$-primary part is
simply $\gothA_\gothp / \ell \gothA_\gothp$, and since $R_\gothp$ is
regular, this last module is isomorphic to $R_\gothp / \gothp^a$, where
$\gothp^a$ is the largest power of $\gothp$ dividing $\ell$.  The 
submodule of $R_\gothp / \gothp^a$ on which $\gothp$ acts trivially is
$\gothp^{a-1}/\gothp^a$, which has order $\ell$.  Since $\Delta$ has 
order greater than~$\ell$, we see that $\Delta$ cannot be embedded in 
$A[\ell]$, a contradiction.

This proves the lemma in the case where $A$ is simple.  Now we turn to
the general case.  The decomposition of $A$ up to isogeny corresponds
to the factorization of~$h_0$.  Suppose
\[h_0 = h_1^{e_1} h_2^{e_2} \cdots h_r^{e_r}\]
is the factorization of $h_0$ into powers of distinct irreducibles, and
suppose, to obtain a contradiction, that $h_0(2s)$ is not divisible 
by~$\ell^2$. As before, choose an embedding $\iota$ of $\Delta$ 
into~$A$.

The radical of $h_0$ is the minimal polynomial of $F+V$, viewed as an 
element of $\End A$, so $A$ is killed by $h_0(F+V)$.  Since $\Delta$ 
can be embedded in $A$, it is also killed by this polynomial in $F+V$.
But since $F+V = 2s$ on $\Delta$, we find that $h_0(2s)$ 
kills~$\Delta$, and since $\Delta$ is $\ell$-torsion, we see that 
$h_0(2s)$ is divisible by~$\ell$.

This means that exactly one prime factor $h_i$ of $h_0$ has the 
property that $h_i(2s)$ is divisible by $\ell$, and for this $i$ we
must have $e_i = 1$.  By renumbering, we may assume that $h_1(2s)$ 
is divisible by $\ell$ and that $e_1 = 1$.

Let $H = h_2^{e_2} \cdots h_r^{e_r}$, so that $H(2s)$ is coprime
to~$\ell$.  If we apply $H(F+V)$ to $A$, we obtain a subvariety $A_1$ 
of $A$ on which $F+V$ satisfies $h_1$.  Since $H(F+V)$ acts as $H(2s)$
on $\Delta$, we see that the image of $\iota(\Delta)$ under $H(F+V)$
is simply $\iota(\Delta)$.  Thus, $\iota$ provides an embedding of
$\Delta$ into the simple variety~$A_1$. As we have shown, the existence
of this embedding is inconsistent with the fact that $h_1(2s)$ is not 
divisible by $\ell^2$, and this contradiction proves the lemma.
\end{proof}

\begin{proof}[Proof of Theorem~\textup{\ref{T:SSfactor}}]
Let $E$ be an elliptic curve over $k$ with real Weil polynomial equal
to $x - 2s$ and let $A$ be an abelian variety over $k$ with real Weil
polynomial equal to $h_0$.  Let $e$ be the gluing exponent of $E^n$ 
and~$A$.  Proposition~\ref{P:RR} says that $e$ divides the reduced 
resultant of $x-2s$ and the radical of $h_0$; this reduced resultant
divides the resultant of $x-2s$ and $h_0$, which is the squarefree 
integer $h_0(2s)$, so $e$ is squarefree.

Suppose $\Delta$ is a nontrivial self-dual group scheme that can be
embedded in a variety isogenous to $E^n$ as the kernel of a 
polarization, and that can also be embedded a variety isogenous to~$A$
as the kernel of a polarization. Let $\ell$ be a prime divisor of the
order of $\Delta$; then the $\ell$-primary part $\Delta_\ell$ of 
$\Delta$ is a nontrivial group scheme that can be embedded in a 
variety isogenous to $E^n$ and in a variety isogenous to~$A$.  
Furthermore, since $e\Delta_\ell = 0$, and $e$ is squarefree, we see
that $\Delta_\ell$ is $\ell$-torsion.  And finally, since $\Delta$ is
isomorphic to the kernel of a polarization and hence has square order,
we see that the order of $\Delta_\ell$ is divisible by~$\ell^2$.

Frobenius and Verschiebung act on $E$ as multiplication by the
integer~$s$, so they act on $E^n$ and on all varieties isogenous to 
$E^n$ in the same way.  Since $\Delta_\ell$ can be embedded in a 
variety isogenous to $E^n$, we see that Frobenius and Verschiebung act
as multiplication by $s$ on $\Delta_\ell$ as well.

But then $\Delta_\ell$ satisfies the hypotheses of Lemma~\ref{L:F=V},
so we find that $h_0(2s)$ is divisible by $\ell^2$, a contradiction.
Thus, no nontrivial self-dual group scheme can be embedded as the
kernel of a polarization both in a variety isogenous to $E^n$ and in
a variety isogenous to $A$.  It follows from 
Proposition~\ref{P:general} that there is no curve over $k$ with real
Weil polynomial equal to~$h$.
\end{proof}

\section{Hermitian lattices}
\label{S:Hermitian}

We saw in Proposition~\ref{P:ec} that if the Jacobian $J$ of a curve 
$C$ over a finite field is isogenous to a product $A\times E$, where
$A$ is an abelian variety and $E$ is an elliptic curve with 
$\Hom(E,A) = \{0\}$, then we can derive an upper bound on the degree of
the smallest-degree map from $C$ to an elliptic curve isogenous to~$E$.
Our goal in this section is to prove a similar result when $J$ is 
isogenous to $A\times E^n$, for $n>0$ and $E$ ordinary.

\begin{proposition}
\label{P:newec}
Suppose $C$ is a curve over $\Fq$ whose Jacobian is isogenous to a
product $A\times E^n$, where $n>0$, where $E$ is an ordinary elliptic 
curve of trace~$t$, and where $A$ is an abelian variety such that 
the gluing exponent $e = e(A,E)$ is finite.  Let $h$ be the real Weil
polynomial of $A$ and let $b = \gcd(e^n,h(t))$.  Let $E'$ be any
elliptic curve isogenous to $E$ whose endomorphism ring is generated
over $\ZZ$ by the Frobenius.  Then there is a map from $C$ to $E'$ of
degree at most
\[ \gamma_{2n}\, b^{1/n} \sqrt{|t^2 - 4q|/4},\]
where $\gamma_{2n}$ is the Hermite constant for dimension~$2n$.
\end{proposition}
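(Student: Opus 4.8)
The plan is to translate the question about maps $C\to E'$ into a shortest-vector problem for a Hermitian lattice and then invoke Hermite's inequality. Set $J=\Jac C$ and $\calO=\End E'$; by hypothesis $\calO=\ZZ[\pi]$ for $\pi$ the Frobenius of $E'$, an order in the imaginary quadratic field $K=\QQ(\pi)$ with $|\disc\calO|=4q-t^2$. Write $\mu_{E'}$ and $\mu_J$ for the canonical principal polarizations. I plan to study the $\calO$-module $L=\Hom(E',J)$, equipped with $H(f,g)=\mu_{E'}^{-1}\widehat g\,\mu_J\,f\in\End E'=\calO$. Because the Rosati involution of $(E',\mu_{E'})$ is complex conjugation on $\calO$, each $H(f,f)$ is conjugation-invariant, hence lies in $\calO\cap\QQ=\ZZ$, and it is positive for $f\neq0$; so $H$ is a positive-definite Hermitian form on the rank-$n$ $\calO$-lattice $L$, and $q(f):=H(f,f)$ is a positive-definite $\ZZ$-valued quadratic form on the underlying rank-$2n$ $\ZZ$-lattice, with $\disc_\ZZ(L,q)=(\det_\calO H)^2(|t^2-4q|/4)^n$ (the factor $|t^2-4q|/4=|\disc\calO|/4$ being the discriminant of the norm form of $\calO$). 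The whole proposition will follow from the single assertion that $\det_\calO H$ divides $b=\gcd(e^n,h(t))$.

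Indeed, a nonzero $f\in L$ produces a finite map $C\to E'$ of degree $q(f)$: the homomorphism $f^\vee:=\mu_{E'}^{-1}\widehat f\,\mu_J\colon J\to E'$ is surjective (a nonzero $f$ from the simple variety $E'$ is injective, so $\widehat f$ is surjective), so $\phi\colon C\hookrightarrow J\xrightarrow{f^\vee}E'$ is a finite cover with $\phi_*=f^\vee$ and $\phi^*=f$ by Jacobian functoriality, whence $[\deg\phi]=\phi_*\phi^*=f^\vee f=[q(f)]$. Combining this with the discriminant formula and Hermite's inequality for the rank-$2n$ lattice $(L,q)$ produces a nonzero $f$ with $q(f)\le\gamma_{2n}(\det_\calO H)^{1/n}\sqrt{|t^2-4q|/4}\le\gamma_{2n}\,b^{1/n}\sqrt{|t^2-4q|/4}$, and the corresponding cover has that degree.

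To bound $\det_\calO H$ I would exploit the gluing structure. Since $A$ and $E$ share no isogeny factor we have $\Hom(E',A)=0$, and every map $E'\to J$ factors through the largest $E$-isotypic abelian subvariety $B\subseteq J$ (because $J/B$ is isogenous to $A$); here $\dim B=n$, $B$ is isogenous to $E^n$, and $q$ is the norm of the Hermitian form attached to the polarization $\lambda:=\iota_B^*\mu_J$ of $B$, with $\iota_B\colon B\hookrightarrow J$. By Lemma~\ref{L:GE} applied to $A$ and $E^n$, $J$ is a quotient $(A'\times B)/\Delta$ with $A'$ isogenous to $A$, with $\Delta$ embedded in $A'[e_0]$ and in $B[e_0]$ for $e_0=e(A,E^n)$, and --- since $\mu_J$ is principal --- with the pullback of $\mu_J$ to $A'\times B$ equal to a product polarization $\lambda_1\times\lambda_2$ in which $\lambda_2=\lambda$ and $\Delta\cong\ker\lambda_1\cong\ker\lambda_2$. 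The determinant I want is $\det_\calO H=\sqrt{|\Delta|}$ (the integer $|\Delta|$ is a square, being the degree of a polarization); to see this one chooses an isogeny $\beta\colon B\to P$ onto a principally polarized variety $P$ isogenous to $B$, with $\ker\beta$ a maximal isotropic subgroup of $\ker\lambda_2$, so that $\lambda_2=\widehat\beta\,\mu_P\,\beta$; then $\beta_*$ embeds $(\Hom(E',B),q)$ isometrically into $(\Hom(E',P),q_P)$, so $\det_\calO H$ equals $[\Hom(E',P):\beta_*\Hom(E',B)]$ times $\det_\calO H_P$, and one evaluates these --- the first via $\beta_*$ on Tate modules, the second by taking $P=(E')^n$ with its product polarization, for which the form is the standard Hermitian form on $\calO^n$ and $\det_\calO H_P=1$ --- to obtain $|\ker\beta|=\sqrt{|\Delta|}$. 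Finally, $\sqrt{|\Delta|}$ divides $b$: from $\Delta\hookrightarrow B[e_0]$ we get $|\Delta|\mid e_0^{2n}$, and since $\Delta\hookrightarrow B$, on which $F^2-tF+q=0$ and $FV=q$, the endomorphism $F+V$ acts on $\Delta$ as the integer $t$; but $\Delta\hookrightarrow A'$, on which $F+V$ has characteristic polynomial $h^2$ on a Tate module, so $\Delta\subseteq\ker\bigl((F+V-t)|_{A'}\bigr)$, a group scheme of order $h(t)^2$; hence $|\Delta|$ divides $\gcd(e_0^{2n},h(t)^2)=\gcd(e_0^n,h(t))^2$, and so $\sqrt{|\Delta|}\mid\gcd(e_0^n,h(t))$.

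I expect the main obstacle to be everything in the previous paragraph concerning $\det_\calO H$. Because $\calO$ need not be maximal and $L$ need not be a free (or even projective) $\calO$-module, one must set up the determinant of a Hermitian $\calO$-lattice with care and verify that it transforms correctly under $\beta$ --- equivalently, that $\mathrm{coker}(\beta_*)$ has order exactly $|\ker\beta|$, and that the value $\det_\calO H_P=1$ does not depend on the chosen principally polarized model; this is where a rigorous proof will do almost all of its work, presumably by way of Deligne's equivalence of categories for the ordinary isogeny class, as in the proof of Theorem~\ref{T:SSfactor}. A second, smaller point is that Lemma~\ref{L:GE} furnishes the exponent $e_0=e(A,E^n)$, whereas the statement uses $e=e(A,E)$; since $e\mid e_0$, one needs the sharper fact that the specific $\Delta$ occurring here --- the kernel of a polarization, sitting inside the $E$-isotypic subvariety $B$ --- is already annihilated by $e(A,E)$, so that in fact $|\Delta|\mid e^{2n}$ and $\sqrt{|\Delta|}\mid\gcd(e^n,h(t))=b$. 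With both points settled, the displayed application of Hermite's inequality completes the proof.
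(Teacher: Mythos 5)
Your outline follows the same strategy as the paper's proof: reduce to a shortest-vector problem for a positive-definite Hermitian $\End E'$-lattice arising from the pullback of the principal polarization, apply Hermite's inequality, and translate a short vector into a map $C\to E'$ via the Jacobian duality argument of Lemma~\ref{L:CtoE}. The paper organizes this slightly differently: instead of working directly with $L=\Hom(E',J)$ and arguing that it coincides with $\Hom(E',B)$ for the $E$-isotypic subvariety, it first applies Lemma~\ref{L:GE} to produce the quotient presentation $0\to\Delta\to A'\times B\to\Jac C\to 0$, pulls the polarization back to $B$ to get $\Lambda$ with $\ker\Lambda\cong\Delta$, and then applies Lemma~\ref{L:quadform} to the lattice $\Hom(E',B)$. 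These are the same lattice, so the difference is cosmetic.

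The place where you correctly anticipate the real work is the determinant computation. The paper does not reduce to a principally polarized model $P$ (which, as you note, need not be $(E')^n$ unless the order has class number one, so $\det_\calO H_P=1$ would still need an argument); instead, Lemma~\ref{L:quadform} computes $\det Q=\left|(\disc R)/4\right|^n\deg\Lambda$ directly in the Deligne-module category by writing $Q(x)=\Tr_{K/\QQ}\bigl(\tfrac{1}{2}\delta S(x,x)\bigr)$ for the skew-Hermitian form $S$ corresponding to $\Lambda$ and a generator $\delta$ of the different of $R$. That single computation is exactly what replaces your ``chase an isogeny $\beta\colon B\to P$'' paragraph and is where your acknowledged obstacle is resolved. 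The bound on $\#\Delta$ is obtained as you predict: $\#\Delta\mid e^{2n}$ from the embedding $\Delta\hookrightarrow B[e]$, and $\#\Delta\mid h(t)^2$ by observing that the image of $\Delta$ in $A'$ lies in $\ker(\eta-t)$ where $\eta=F+V$, a group scheme of order $h(t)^2$ since the characteristic polynomial of $\eta$ on $A'$ is $h^2$.

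Your ``second, smaller point'' — that Lemma~\ref{L:GE} naturally furnishes $e_0=e(A,E^n)$ while the statement uses $e=e(A,E)$ — is a legitimate observation; the paper applies Lemma~\ref{L:GE} to the factorization $A\times E^n$ and then simply writes that $\Delta$ lies in the $e$-torsion of $B$. This is fine provided $e(A,E^n)\mid e(A,E)$ (equivalently, $e(A,E^n)=e(A,E)$, since divisibility the other way is immediate); the relevant integers produced in Lemma~\ref{L:rings}/Lemma~\ref{L:RR} all arise from elements of $\ZZ[F,V]$ that act as the same scalar on $E$ as on $E^n$, so the bounds apply equally, but the paper does not spell this equality out and you are right to flag it. In short: correct plan, same route as the paper, with the substantive technical content concentrated exactly where you identify it, and filled in the paper by Lemma~\ref{L:quadform}'s Deligne-module computation rather than by a reduction to a principally polarized model.
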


\begin{remark}
We have
\[
    \gamma_2^2 = 4/3,  \quad
    \gamma_4^4 = 4,    \quad 
    \gamma_6^6 = 64/3, \quad 
    \gamma_8^8 = 256,  \quad \text{and\ }
    \gamma_{10}^{10} < 5669.
\]
The value of $\gamma_2$ was given by Hermite~\cite{Hermite1850}, the
value of $\gamma_4$ by Korkine and Zolotareff~\cite{KorkineZolotareff},
and the values of $\gamma_6$ and $\gamma_8$ by 
Blichfeldt~\cite{Blichfeldt1935}. The upper bound for $\gamma_{10}$ 
follows from an estimate of Blichfeldt~\cite{Blichfeldt1914}.
General upper bounds for $\gamma_n$ can be found 
in~\cite{GL1987}*{\S38}.
\end{remark}

In any specific instance, it may be possible to improve the bound from
Proposition~\ref{P:newec} by using refinements of the individual lemmas
from which the proof of the proposition is built.  We turn now to these
lemmas.  After presenting the proof of Proposition~\ref{P:newec}, we
will explore some cases in which the proposition can be improved.

\begin{lemma}
\label{L:quadform}
Let $E$ be an ordinary elliptic curve over $\Fq$, let $R$ be the 
endomorphism ring of $E$, let $A$ be an abelian variety isogenous to
$E^n$ such that $R$ is contained in the center of~$\End A$, and let 
$\Lambda$ be a polarization of~$A$.  Let $Q\col \Hom(E,A)\to \ZZ$ be
the map that sends $0$ to $0$ and that sends a nonzero morphism $\psi$
to the square root of the degree of the pullback polarization 
$\psi^*\Lambda$. Then $Q$ is a positive definite quadratic form on 
$\Hom(E,A)$, the determinant of $Q$ is equal to
$\left|(\disc R)/4\right|^n \deg\Lambda$, and there is a nonzero
element $\psi$ of $\Hom(E,A)$ such that
\[Q(\psi) \le \gamma_{2n}\, (\deg\Lambda)^{1/(2n)} \sqrt{\left|\disc R\right|/4}.\]
\end{lemma}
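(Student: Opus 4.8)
The plan is to realize $\Hom(E,A)$ as a Hermitian module over the order $R$, and to translate the three claims — positive-definiteness, the determinant formula, and the short-vector bound — into standard statements about the underlying $\ZZ$-lattice of rank $2n$. First I would set up the Hermitian structure: since $R$ is in the center of $\End A$ and $A$ is isogenous to $E^n$, the group $M := \Hom(E,A)$ is a torsion-free $R$-module of rank $n$, hence projective; because $R$ is the maximal order in the imaginary quadratic field $K = R\otimes\QQ$ (this is exactly the ``class number $1$'' / ``$\End E'$ generated by Frobenius'' setting that is invoked in Proposition~\ref{P:newec}, though for this lemma we only need $R$ to act on the center), $M$ is in fact free over $R$. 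The key point is that the polarization $\Lambda$ together with the Rosati involution — which on $R$ is complex conjugation, since $E$ is ordinary and $F, V$ are swapped — equips $M$ with a Hermitian form $H$ whose associated $\ZZ$-valued quadratic form is, up to the standard normalization, the map $\psi \mapsto \deg(\psi^*\Lambda)$. The square root in the definition of $Q$ is there precisely because $\deg$ of a polarization on an elliptic-curve-power is a perfect square (it is the square of a Hermitian norm), so $Q(\psi) = \sqrt{\deg \psi^*\Lambda}$ is itself a positive-definite quadratic form on the rank-$2n$ lattice $M$; positive-definiteness is immediate from the fact that $\psi^*\Lambda$ is a genuine (nonzero, hence ample) polarization whenever $\psi \ne 0$.

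Next I would compute the determinant of $Q$. The clean way is to reduce to the case $n=1$ and $A = E$ with its canonical principal polarization, where $\Hom(E,E) = R$ and $Q$ is the norm form of $R$, whose Gram determinant is $|\disc R|/4$. For general $n$ and general $\Lambda$, write the polarization as a positive-definite Hermitian matrix over $R$ with respect to an $R$-basis of $M$: if that matrix has determinant $\delta \in \ZZ_{>0}$ (a positive integer since it is fixed by conjugation and positive), then on one hand $\deg\Lambda$ is the square of the reduced norm, so $\deg\Lambda = \delta^2$ — wait, more carefully, $\deg\Lambda = \Norm_{K/\QQ}(\det H) = \delta^2$ is not quite right either unless $\det H \in \ZZ$; in general $\deg \Lambda = N(\det H)$ where $N$ is the norm from $R$ to $\ZZ$, and the $\ZZ$-lattice determinant of $Q$ multiplies over an $R$-basis as $(|\disc R|/4)^n \cdot N(\det H) = (|\disc R|/4)^n \deg\Lambda$. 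I would make this precise by the standard ``discriminant of a Hermitian lattice over an imaginary quadratic order'' computation: for a rank-one Hermitian lattice $(R, \langle x,y\rangle = \bar x a y)$ with $a \in \ZZ_{>0}$ the norm form is $a \cdot N_{K/\QQ}$, with $\ZZ$-Gram-determinant $a^2 \cdot |\disc R|/4$; tensoring/direct-summing gives the general formula, and $\prod a_i^2$ (from a diagonalization over $K$) equals $\deg\Lambda$.

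Finally, the short-vector bound is a direct invocation of the definition of the Hermite constant: $Q$ is a positive-definite quadratic form on a $\ZZ$-lattice of rank $2n$ with determinant $D = (|\disc R|/4)^n \deg\Lambda$, so there is a nonzero $\psi$ with $Q(\psi) \le \gamma_{2n} D^{1/(2n)} = \gamma_{2n}\,(|\disc R|/4)^{1/2} (\deg\Lambda)^{1/(2n)}$, which is exactly the claimed inequality. The main obstacle, and the only place real care is needed, is the determinant computation — specifically, pinning down the precise normalization relating $\deg(\psi^*\Lambda)$ to the Hermitian norm and checking that the factor $(|\disc R|/4)^n$ (rather than $|\disc R|^n$ or $(|\disc R|/4)^{2n}$) is correct. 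I would settle this either by the rank-one base case plus additivity of the quadratic form under orthogonal direct sums over $K$, or by citing the standard formula for the discriminant of a Hermitian lattice over the maximal order of an imaginary quadratic field (e.g.\ as in the literature on Hermitian forms); the positive-definiteness and the Hermite-constant step are then routine.
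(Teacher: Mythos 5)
Your overall architecture matches the paper's: translate $\Hom(E,A)$ into a rank-$n$ module over $R$ with a Hermitian structure coming from $\Lambda$ and the Rosati involution, recognize the square root of $\deg\psi^*\Lambda$ as the associated $\ZZ$-valued quadratic form on a rank-$2n$ lattice, compute its determinant, and finish with the Hermite constant. The positive-definiteness and Hermite-constant steps are fine, and your rank-one sanity check (Gram determinant $a^2\left|\disc R\right|/4$ for the scaled norm form on $R$) is correct.

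The gap is in the determinant computation. You assert that $M = \Hom(E,A)$ is torsion-free hence projective, and then free over $R$ because $R$ is the maximal order of an imaginary quadratic field of class number $1$; you then diagonalize $\Lambda$ over an $R$-basis. But Lemma~\ref{L:quadform} makes none of those hypotheses: $R = \End E$ is allowed to be any (possibly non-maximal) quadratic order, and the field need not have class number $1$. (Those restrictions appear only later, in Lemma~\ref{L:Enbound} and Proposition~\ref{P:LauterSerre}.) Over a non-maximal order, ``torsion-free $\Rightarrow$ projective'' already fails, and projective certainly does not imply free without a class-number hypothesis, so the diagonalization step as written does not apply in the generality of the lemma.

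The paper avoids this by never choosing an $R$-basis. After identifying $\Hom(E,A)$ with the Deligne module $M$ of $A$ and the pullback quadratic form with $x \mapsto \delta\, S(x,x)$ (where $S$ is the skew-Hermitian form attached to $\Lambda$ and $\delta$ generates the different of $R$), it computes $\det Q$ directly from cokernel sizes: the map $M \to \Hom(M,\ZZ)$ induced by $\Tr_{K/\QQ}\circ S$ has cokernel of order $\deg\Lambda$ by definition of the degree of a polarization; twisting $S$ by $\delta$ multiplies this by $\Norm(\delta)^n = \left|\disc R\right|^n$ because multiplication by $\delta$ on the rank-$2n$ lattice $M$ has $\ZZ$-determinant $\Norm(\delta)^n$; and the $(1/2)$ in the associated symmetric bilinear form contributes $(1/2)^{2n}$. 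This gives $\det Q = \left|(\disc R)/4\right|^n \deg\Lambda$ with no freeness or class-number assumptions. If you want to keep your diagonalization argument, you must either restrict the lemma (and then the generality claimed would be lost) or replace it by a local/cokernel argument of this kind; the cleanest fix is just to compute with the trace pairing as above.
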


\begin{remark}
By the \emph{determinant} of a positive definite quadratic form 
$Q\col L\to \ZZ$ on a free $\ZZ$-module~$L$, we mean the following: Let
$B$ be the unique symmetric bilinear form $L\times L\to\QQ$ such that
$Q(x) = B(x,x)$ for all $x$. Then we define the determinant of $Q$ to
be the determinant of the Gram matrix for~$B$.

If we let $L^* = \Hom(L,\ZZ)$ be the dual of~$L$, then $2B$ defines a
homomorphism $b\col L\to L^*$, and we have 
\[ \det Q = (1/2)^{\rank L} [L^*\col b(L)] .\]
\end{remark}

\begin{proof}[Proof of Lemma~\textup{\ref{L:quadform}}]
Let $p$ be the characteristic of $\Fq$, let $\pi$ and $\pibar$ be the
Frobenius and Verschiebung endomorphisms of~$E$, and let $R_0$ be the
subring $\ZZ[\pi,\pibar]$ of~$R$.  The theory of Deligne 
modules~\cites{Deligne1969,Howe1995} shows that the category of abelian
varieties over $\Fq$ that are isogenous to some power of~$E$ is
equivalent to the category of torsion-free finitely generated 
$R_0$-modules. The equivalence depends on a choice: we must specify an
embedding $\eps\col W\to\CC$ of the Witt vectors $W$ over $\Fqbar$ into
the complex numbers.  The equivalence sends an abelian variety to the
first integral homology group of the complex abelian variety obtained
by base-extension (via $\eps$) from the canonical lift of the variety.
It follows from the irreducibility of Hilbert class polynomials that we
can choose the embedding $\eps$ so that the equivalence takes the 
elliptic curve $E$ to the $R_0$-module $R$.

We recall from~\cite{Howe1995}*{\S4} how the concept of a polarization
translates to the category of Deligne modules, at least in the special
case we are considering. Let $K$ be the quotient field of $R_0$. The
embedding $\eps$ determines a $p$-adic valuation $\nu$ on the field of
algebraic numbers sitting inside the complex numbers; we let 
$\varphi\col K\to\CC$ be the complex embedding of $K$ such that 
$\nu(\varphi(\pi))>0$.  A \emph{polarization} on a finitely-generated
torsion-free $R_0$-module $M$ is a skew-Hermitian form 
\[S\col (M\otimes\QQ) \times (M\otimes\QQ) \to K\]
such that $\Tr_{K/\QQ}S(M,M)\subseteq\ZZ$ and such that 
$\varphi(S(x,x))$ lies in the lower half-plane for all nonzero 
$x\in M$.  (Note that $\varphi(S(x,x))$ must be pure imaginary, since
$S$ is skew-Hermitian.)
The composition $\Tr_{K/\QQ}\circ S$ gives a map $M\to\Hom(M,\ZZ)$; 
the \emph{degree} of the polarization is the size of the cokernel of 
this map.

Now let $M$ be the $R_0$-module corresponding to the $n$-dimensional 
variety $A$.  Since $R$ lies in the center of the endomorphism ring 
of~$M$, we see that the $R_0$-module structure of $M$ extends to an 
$R$-module structure.  Thus, every element $x$ of $M$ determines a map
$\alpha_x\col R\to M$ defined by $\alpha_x(r) = rx$, and every map from
$R$ to $M$ is of this form.  We find that $\Hom(E,A) \cong M$.

Let $S$ be the polarization on $M$ corresponding to the polarization
$\Lambda$ of $A$, and let $\psi$ be a nonzero map from $E$ to $A$,
corresponding to a map $\alpha\col R\to M$, say $\alpha(r) = rx$ for a
nonzero $x\in M$.  The polarization $\varphi^*\Lambda$ then corresponds
to the skew-Hermitian form
\[ S_x\col K\times K\to K\]
defined by
\[S_x(u,v) = S(ux, vx) = u\bar{v} S(x,x).\] 
Our map $R\to\Hom(R,Z)$ is then
\[ v \mapsto \left(u\mapsto \Tr_{K/\QQ} u\bar{v} S(x,x)\right), \]
and the size of the cokernel of this map is the index of the $R$-module
$S(x,x) R$ inside the trace dual of~$R$.  Let $\delta$ be a generator
of the different of $R$, chosen so that $\varphi(\delta)$ is pure
imaginary and in the upper half plane.  Then the size of the cokernel
is the norm of $\delta S(x,x)$.  Since $S(x,x)$ and $\delta$ are both
pure imaginary, and their images under $\varphi$ lie in opposite 
half-planes, the product $\delta S(x,x)$ is a positive rational number,
so its norm is just its square. Thus, under the identification 
$\Hom(E,A)\cong M$, the function $Q$ in the statement of the lemma is
the map $M\to\ZZ$ defined by $Q(x) = \delta S(x,x).$  Therefore $Q$
is a quadratic form.

We compute that the symmetric bilinear form $B$ on $M$ such that 
$Q(x) = B(x,x)$ is given by
\[ B(x,y) = (1/2) \Tr_{K/\QQ} \delta S(x,y) .\]
The map $M\to\Hom(M,\ZZ)$ determined by $\Tr_{K/\QQ}\circ S$ has 
cokernel of size $\deg\Lambda$; replacing $S$ with $\delta S$ increases
the size of the cokernel by $\Norm(\delta)^n = \left|\disc R\right|^n$.
Therefore
\begin{align*}
\det Q  &= (1/2)^{\rank_\ZZ M} \left|\disc R\right|^n \deg\Lambda \\
        &=  \left|(\disc R)/4\right|^n \deg\Lambda.
\end{align*}        
The final statement of the lemma follows 
from~\cite{GL1987}*{Thm.~38.1, p.~386} 
or~\cite{Cassels}*{Thm.~12.2.1, p.~260}.
\end{proof}

\begin{lemma}
\label{L:CtoE}
Let $C$ be a curve over a field $k$, let $E$ be an elliptic curve 
over~$k$, and let $\mu\col E\to \hat{E}$ and 
$\lambda\col \Jac C \to \hat{\Jac C}$ be the canonical polarizations 
of $E$ and of the Jacobian of $C$, respectively. Suppose $C$ has a 
$k$-rational divisor $D$ of degree~$1$, and let $\eps\col C\to \Jac C$
be the embedding that sends a point $P$ to the class of the divisor 
$P - D$.

Suppose $\psi$ is a nonzero homomorphism from $E$ to $\Jac C$, so that
$\hat{\psi}\lambda\psi = d\mu$ for some positive integer~$d$.  Let 
$\varphi\col C \to E$ be the map $\mu^{-1}\hat{\psi}\lambda\eps$.  
Then $\psi = \varphi^*$, and $\varphi$ has degree $d$.
\end{lemma}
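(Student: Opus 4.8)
The plan is to work entirely with the standard dictionary between maps of curves and maps of Jacobians, exploiting the fact that an elliptic curve is its own Jacobian and its canonical polarization $\mu$ is an isomorphism. First I would recall that $\varphi\col C\to E$ is \emph{defined} as the composite $\mu^{-1}\hat\psi\lambda\eps$, so the only two things to check are that $\varphi^* = \psi$ and that $\deg\varphi = d$. The key identity to have in hand is the relationship between the pullback $\varphi^*\col E = \Jac E \to \Jac C$ and the Albanese (pushforward) map $\varphi_*\col \Jac C\to \Jac E = E$: for any nonconstant map of curves one has $\varphi_* = \mu^{-1}\hat{(\varphi^*)}\lambda$, and dually $\varphi^*\circ\varphi_*$ and $\varphi_*\circ\varphi^*$ are both multiplication by $\deg\varphi$ on the respective Jacobians. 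I would also use that, because $D$ has degree $1$ and $\eps$ is the associated Abel--Jacobi embedding, the pushforward $\varphi_*\col\Jac C\to E$ satisfies $\varphi_*\circ\eps = \varphi - (\text{class of }\varphi_*D)$, i.e.\ $\varphi_*\eps$ differs from $\varphi$ (viewed as a map $C\to E$) by a translation; after adjusting the base point this is harmless.

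Next I would assemble these. Unwinding the definition, $\mu\varphi = \hat\psi\lambda\eps$. On the other hand, for the (a priori unknown) map $\varphi$ we have $\mu\,\varphi_* \cdot (\text{stuff}) = \hat{(\varphi^*)}\lambda$, so the natural guess is that $\hat\psi = \hat{(\varphi^*)}$, equivalently $\psi = \varphi^*$. To make this rigorous I would argue as follows: from $\mu\varphi = \hat\psi\lambda\eps$ one gets, by taking the induced map on Jacobians (using $\Jac$ as a functor and that $\eps$ induces the identity on $\Jac C$, while $\varphi$ induces $\varphi^*$ on going from $\Jac E$ to $\Jac C$ — or rather its transpose), the identity $\mu = \hat\psi\lambda\widehat{\varphi^*}\hat\mu^{-1}$ after dualizing appropriately; since $\lambda$ and $\mu$ are the canonical principal polarizations this forces $\psi$ and $\varphi^*$ to agree. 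Concretely: apply the contravariant functor $\Jac$ (Pic${}^0$) to $\varphi\col C\to E$ to get $\varphi^*\col E\to\Jac C$; the Abel--Jacobi embedding $\eps$ is compatible with this in the sense that $\widehat{(\varphi^*)} = \mu^{-1}\cdots$, and comparing with $\mu\varphi=\hat\psi\lambda\eps$ yields $\varphi^* = \psi$ directly because an equality of maps $C\to\Jac C$-level data that agree after composing with $\eps$ and the polarizations must agree (both sides are the transpose of the same map of abelian varieties).

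Finally, the degree. Once $\psi = \varphi^*$ is established, compute $\hat\psi\lambda\psi = \widehat{(\varphi^*)}\lambda\varphi^* = \mu\,\varphi_*\varphi^* = \mu\cdot[\deg\varphi] = (\deg\varphi)\mu$, where the middle equality uses $\widehat{(\varphi^*)}\lambda = \mu\varphi_*$ and the last uses $\varphi_*\varphi^* = [\deg\varphi]$ on $\Jac E = E$. Comparing with the hypothesis $\hat\psi\lambda\psi = d\mu$ gives $\deg\varphi = d$. The main obstacle I anticipate is purely bookkeeping: getting the variances right (pullback versus pushforward, and which of $\mu,\lambda$ or their inverses appear where), and making precise the step "$\varphi^* = \psi$ because both are the transpose of the same datum" without hand-waving; everything else is a formal consequence of the adjunction $\widehat{(\varphi^*)}\lambda_C = \mu_E\varphi_*$ and the projection-formula identity $\varphi_*\varphi^* = [\deg\varphi]$. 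I would be careful to note that $d>0$ (hence $\varphi$ is nonconstant) is part of the hypothesis, so these identities apply; if $\psi\neq 0$ but $\varphi$ were constant we'd get $d=0$, contradicting $d>0$.
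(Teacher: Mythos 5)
Your proposal is correct and takes essentially the same route as the paper: identify $\mu^{-1}\hat\psi\lambda$ with the Albanese map $\varphi_*$ via the universal property of $\eps$, invoke the duality $\widehat{\varphi^*}\lambda = \mu\varphi_*$ (the paper's Lemma~\ref{L:CtoD}) to conclude $\psi=\varphi^*$, and then use $\varphi_*\varphi^*=[\deg\varphi]$ to read off $\deg\varphi=d$ from the hypothesis $\hat\psi\lambda\psi=d\mu$. The paper organizes the first two steps as a commutative-diagram chase while you state them as identities, but the underlying content and order of ideas coincide.
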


\begin{proof}
The maps in the lemma can be arranged into the following diagram:
\[
\xymatrix{
   & E\ar[r]^{d}\ar[d]_{\psi}       & E\ar[r]^{\mu}_{\sim}               &\hat{E} \\
C\ar[r]^-{\eps}\ar`d[r]`[rrr]+/r 3em/`[u]+/u 2em/_{\varphi}`[rr][rru]
   & \Jac C\ar[rr]^{\lambda}_{\sim} &                                    & \hat{\Jac C}\ar[u]_{\hat{\psi}}
}
\]
Inserting another copy of $\Jac C$ into the bottom row we obtain
\[
\xymatrix{
   & E\ar[r]^{d}\ar[d]_{\psi}       & E\ar[r]^{\mu}_{\sim}               &\hat{E} \\
C\ar[r]^-{\eps}\ar`d[r]`[rrr]+/r 3em/`[u]+/u 2em/_{\varphi}`[rr][rru]
   & \Jac C\ar[r]^{1}               &\Jac C\ar[r]^{\lambda}_{\sim}\ar[u] & \hat{\Jac C}\ar[u]_{\hat{\psi}}
}
\]
and we see that the middle vertical map from $\Jac C$ to $E$ is equal
to $\varphi_*$.  Lemma~\ref{L:CtoD}, below, shows that then we must
have $\psi = \varphi^*$, which is the first part of the conclusion of 
the lemma.  Since $\varphi_*\varphi^*$ is equal to multiplication by
the degree of $\varphi$, we find that $\deg\varphi = d$.
\end{proof}

\begin{lemma}
\label{L:CtoD}
Let $f\col C\to D$ be a nonconstant morphism of curves over a 
field~$k$, and let $f_*\col \Jac C\to \Jac D$ and 
$f^*\col \Jac D \to \Jac C$ be the associated push-forward and
pullback maps between the Jacobians of $C$ and $D$.  Under the natural
isomorphisms between $\Jac C$ and $\Jac D$ and their dual varieties,
the isogenies $f_*$ and $f^*$ are dual to one another.
\end{lemma}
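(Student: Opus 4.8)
The plan is to verify that $f_*$ and $f^*$ are dual maps by tracing through the standard identification of the Jacobian of a curve with its own dual variety, namely the canonical principal polarization coming from the theta divisor (equivalently, from the autoduality $\Pic^0 \cong \widehat{\Pic^0}$ given by the Weil pairing on divisor classes). Concretely, for a curve $X$ let $\lambda_X \col \Jac X \to \widehat{\Jac X}$ denote this canonical isomorphism. What must be shown is that the square relating $f_* \col \Jac C \to \Jac D$, its dual $\widehat{f_*} \col \widehat{\Jac D} \to \widehat{\Jac C}$, and the map $f^* \col \Jac D \to \Jac C$ commutes, i.e.\ $f^* = \lambda_C^{-1} \circ \widehat{f_*} \circ \lambda_D$.

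First I would recall the functorial description of $\lambda_X$: for any abelian variety $\Jac X$ arising as $\Pic^0 X$, the dual $\widehat{\Jac X} = \Pic^0(\Jac X)$, and $\lambda_X$ sends a line bundle class to the associated translation-invariant line bundle on $\Jac X$ built from the Poincar\'e bundle on $X \times \Jac X$. The key input is the compatibility of the Poincar\'e bundles of $C$ and $D$ with the morphism $f$: pullback of line bundles along $f \col C \to D$ is, on the level of the universal families, intertwined with pushforward of $0$-cycles, which is precisely the statement that $f^*$ on Jacobians is adjoint to $f_*$ with respect to the Poincar\'e pairings. I would then simply unwind the definition of $\widehat{f_*}$ as $\Pic^0$ applied to $f_*$ and check that the resulting map on Poincar\'e bundles agrees with the one induced by $f^*$; this is a diagram chase using the projection formula for $f \times \mathrm{id}$ on $C \times \Jac D$ versus $D \times \Jac D$.

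Alternatively --- and this is probably the cleaner route to write down --- I would cite the classical fact (e.g.\ Mumford, \emph{Abelian Varieties}, or Birkenhake--Lange) that for a morphism of curves the induced maps $f_*$ and $f^*$ on Jacobians are each other's transpose under the principal polarizations, which is essentially the definition of the contravariant/covariant functoriality of $\Pic^0$ being matched by autoduality. The only real content to verify is that the polarizations used to identify $\Jac C$ and $\Jac D$ with their duals are the canonical ones (they are, by hypothesis in the lemma statement), and that no sign or inversion is lost; since the theta polarization is symmetric, $\widehat{\lambda_X} = \lambda_X$ under the double-duality identification, so the adjointness is clean. The main obstacle is purely bookkeeping: making sure the direction of each arrow and the identification $\widehat{\widehat{A}} = A$ are handled consistently, so that the claimed duality $\widehat{f_*} = \lambda_C \circ f^* \circ \lambda_D^{-1}$ (equivalently $\widehat{f^*} = \lambda_D \circ f_* \circ \lambda_C^{-1}$) comes out with the correct variance rather than its inverse. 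There is no deep geometric difficulty; the statement is a well-known compatibility, and the proof is a short reference-backed diagram chase.
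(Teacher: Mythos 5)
Your proposal is correct in outline but takes a genuinely different route from the paper's. The paper (following Mumford's Prym paper) works directly on the level of curves: it reduces to $k$ algebraically closed, picks a base point $P\in C(k)$, forms the Abel--Jacobi embeddings $g_C\col C\to\Jac C$ and $g_D\col D\to\Jac D$ based at $P$ and $f(P)$, observes the commutative square $g_D\circ f = f_*\circ g_C$, and then simply applies the contravariant functor $\Pic^0$ to this square. The only nontrivial input is the autoduality fact $\Pic^0(g_X)=\lambda_X^{-1}$, and the dual of $f_*$ appears automatically on the right-hand column; the identity $f^*=\lambda_C^{-1}\hat{f_*}\lambda_D$ falls out with no further bookkeeping. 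Your first route, by contrast, works at the level of Poincar\'e bundles on $X\times\Jac X$ and the projection formula for $f\times\mathrm{id}$; this is a perfectly valid and in some sense more fundamental argument (it is essentially the proof of the autoduality fact the paper invokes), but it requires more careful sign- and direction-chasing, which you correctly flag as the main hazard. Your second route (cite Mumford or Birkenhake--Lange) is of course legitimate but is not what the paper does, since the paper deliberately reproduces Mumford's argument for the reader's convenience. The paper's approach buys brevity and a clean separation of the one nonformal input (autoduality of the Jacobian via the Abel--Jacobi map); your Poincar\'e-bundle approach buys self-containedness at the cost of exactly the bookkeeping you worry about. Neither has a gap, but if you were to execute your first route you would effectively be reproving the autoduality statement in the middle of the argument, which is more work than needed here.
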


\begin{remark}
This statement is proven by Mumford~\cite{Mumford1974b}*{\S1}. Mumford
assumes that $f$ has degree $2$ because his paper is concerned with
double covers; however, the proof does not use this assumption. For the
convenience of the reader, we include a version of his proof here.
\end{remark}

\begin{proof}[Proof of Lemma~\textup{\ref{L:CtoD}}]
Let $\lambda_C$ and $\lambda_D$ be the canonical principal 
polarizations of $\Jac C$ and $\Jac D$.  It suffices to prove the lemma
in the case where $k$ is algebraically closed, so we may assume that $C$
has a $k$-rational point~$P$.  Let $g_C$ be the embedding of $C$ into
$\Jac C$ that sends a point $Q$ to the class of the divisor $Q - P$, 
and let $g_D$ be the embedding of $D$ into $\Jac D$ that sends a point
$Q$ to the class of the divisor $Q - f(P)$.  Then we have a commutative
diagram
\[
\xymatrix{
C\ar[r]^-{g_C}\ar[d]_{f} & \Jac C\ar[d]^{f_*} \\
D\ar[r]^-{g_D}           & \Jac D.
}
\]
Applying the functor $\Pic^0$, we obtain the diagram
\[
\xymatrix{
\Jac C             & \hat{\Jac C}\ar[l]_{\lambda_C^{-1}} \\
\Jac D\ar[u]^{f^*} & \hat{\Jac D}\ar[l]_{\lambda_D^{-1}}\ar[u]_{\hat{f_*}},
}
\]
which expresses the fact that $f^*$ is isomorphic to the dual of $f_*$;
that is, $f^*$ and $f_*$ are dual to one another.
\end{proof}

\begin{proof}[Proof of Proposition~\textup{\ref{P:newec}}]
Suppose $C$ is a curve over a finite field $\Fq$ whose Jacobian is 
isogenous to a product $A\times E^n$, where $n>0$, where $E$ is an 
ordinary elliptic curve with trace $t$, and where $A$ is an abelian
variety such that the gluing exponent $e = E(A,E)$ is finite. Let $E'$ 
be an ordinary elliptic curve over $\Fq$ isogenous to $E$ such that
the endomorphism ring $R$ of $E'$ is generated by the Frobenius; this
means that the discriminant of $R$ is equal to $t^2 - 4q$.

Lemma~\ref{L:GE} says that there is a variety $A'$ isogenous to~$A$, a
variety $B$ isogenous to $E^n$, and an exact sequence
\[ 0 \to \Delta \to A' \times B \to \Jac C \to 0\]
where $\Delta$ is a finite group scheme and the induced maps 
$\Delta\to A'$ and $\Delta\to B$ are monomorphisms.  Let $\lambda$ be
the canonical principal polarization of $\Jac C$.  Again by
Lemma~\ref{L:GE}, pulling $\lambda$ back to $B$ gives us a polarization
$\Lambda$ of $B$ with kernel isomorphic to $\Delta$.  The lemma also
says that $\Delta$ can be embedded into the $e$-torsion of $B$, so the
order of $\Delta$ is a divisor of~$e^{2n}$. 

Let $\eta_\Delta \in \End \Delta$ be the sum of the Frobenius and 
Verschiebung endomorphisms of~$\Delta$, and let $\eta\in\End A'$ be the 
sum of the Frobenius and Verschiebung endomorphisms of~$A'$.  Since
$\Delta$ embeds into $B$, we have $\eta_\Delta = t$ on $\Delta$. 
Thus, the image of $\Delta$ in $A'$ must lie in the kernel of the 
endomorphism $\eta - t$.  The degree of this endomorphism is the 
constant term of its characteristic polynomial, and since the 
characteristic polynomial of $\eta$ is $h^2$, the characteristic
polynomial of $\eta - t$ is $h(x + t)^2$, whose constant term 
is~$h(t)^2$.  Thus, $\Delta$ embeds into a group scheme of 
order~$h(t)^2$, so the order of $\Delta$ is a divisor of $b^2$, where 
$b = \gcd(e^n,h(t))$.

Let $Q$ be the map that sends a nonzero homomorphism $\psi\col E'\to B$
to the square root of the degree of the pullback polarization 
$\psi^*\Lambda$.  Lemma~\ref{L:quadform} says that there is a nonzero
element $\psi$ of $\Hom(E',B)$ such that 
\[ Q(\psi) \le \gamma_{2n}\, (\#\Delta)^{1/(2n)} \sqrt{\left|\disc R\right|/4} 
           \le \gamma_{2n}\, b^{1/n} \sqrt{\left|\disc R\right|/4}.\]
Thus we have a diagram
\[\xymatrix{
E\ar[d]\ar[r]^{\psi^*\Lambda}    & \hat{E} \\
B\ar[d]\ar[r]^{\Lambda}          & \hat{B}\ar[u] \\
\Jac C\ar[r]^\lambda             & \hat{\Jac C}\ar[u]
}
\]
where the vertical arrows on the right are the dual morphisms of the 
vertical arrows on the left.  Using Lemma~\ref{L:CtoE} we obtain a
map from $C$ to $E$ whose degree $d$ is equal to $Q(\psi)$, so that
$d\le \gamma_{2n} \, b^{1/n} \sqrt{|t^2-4q|/4}.$
\end{proof}

As we mentioned earlier, the bound in Proposition~\ref{P:newec} can
sometimes be improved.  There are two places in the proof of the
proposition where improvements can be made:  First, when one has 
specific varieties $A$ and $E$ in hand, the estimate for the size of 
the group scheme $\Delta$ can often be sharpened by a more thorough
analysis of the $e(A,E)$-torsion group schemes that can be embedded 
in a variety isogenous to $E^n$ and in a variety isogenous  to~$A$.
Second, we obtain upper bounds on short vectors for the quadratic form
$Q$ by using general bounds on short vectors in lattices.  But the 
lattices we are considering are quite special --- they come provided
with an action of an imaginary quadratic order --- so there is no
reason to suspect that the bounds for general lattices will be sharp
in our situation.  Improving upper bounds on the lengths of short 
vectors in such lattices is helpful enough in practice that we will 
devote the remainder of this section to doing so.

We will start by studying pullbacks of polarizations on powers of 
elliptic curves.  Then we will focus on the very special case of 
ordinary elliptic curves over finite fields that are isogenous to no
other curves.

\begin{lemma}
\label{L:Enpols}
Let $E$ be an elliptic curve over a field $k$ and let $R = \End E$, so
that $R$ is a ring with a positive involution. Let $\lambda_0$ be the 
canonical principal polarization of $E$.  Fix an integer $n>0$, and let
$\Lambda_0$ denote the product polarization $\lambda_0^n$ of $E^n$.  
Let $\Phi_n$ denote the map from polarizations of $E^n$ to 
$\End E^n \cong M_n(R)$ that sends a polarization $\Lambda$ of $E^n$ 
to the element $\Lambda_0^{-1}\Lambda$ of $\End E^n$. Then\textup{:}
\begin{enumerate}
\item The image of $\Phi_n$ is the set of positive definite Hermitian
      matrices in $M_n(R)$.
\item The degree of a polarization $\Lambda$ of $E^n$ is the square of
      the determinant of $\Phi(\Lambda)$.
\item Let $\Lambda$ be a polarization of $E^n$ and let $H$ be the 
      Hermitian form on $R^n$ determined by $\Phi_n(\Lambda)$.  If 
      $\alpha \col E \to E^n$ is a nonzero map corresponding to a 
      vector $v$ of elements of $R$, then the polarization 
      $\alpha^*\Lambda$ of $E$ is equal to  $d\lambda_0$, where 
      $d = H(v,v)$.
\end{enumerate}
\end{lemma}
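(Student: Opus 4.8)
The plan is to recall the standard dictionary between polarizations of an abelian variety and elements of its endomorphism algebra fixed by the Rosati involution, and to specialize it to the case $A = E^n$, where $\End E^n \cong M_n(R)$ and the Rosati involution (with respect to $\Lambda_0$) is conjugate-transpose. First I would observe that, since $\Lambda_0$ is a principal polarization, the map $\Lambda \mapsto \Lambda_0^{-1}\Lambda$ is a bijection between homomorphisms $E^n \to \hat{E^n}$ and elements of $\End E^n$, under which polarizations — i.e.\ symmetric isogenies that are ``positive,'' in the sense that over $\bar k$ they arise from ample line bundles — correspond exactly to the elements of $\End E^n$ that are fixed by the Rosati involution and positive definite. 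Tracking through the identification $\End E^n \cong M_n(R)$ and the fact that the Rosati involution on $M_n(R)$ induced by the product polarization $\Lambda_0 = \lambda_0^n$ is $M \mapsto \bar M^{t}$ (where the bar is the positive involution on $R$ coming from $\lambda_0$), statement~(1) falls out: the Rosati-fixed elements are precisely the Hermitian matrices, and the positivity condition is exactly positive-definiteness. This is essentially Mumford's discussion of polarizations in terms of the Néron--Severi group, together with the elementary computation of the Rosati involution on a power of $E$.

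For statement~(2), I would use that the degree of a polarization $\Lambda$, viewed as an isogeny $E^n \to \hat{E^n}$, is $\deg \Lambda = \#\ker\Lambda$, and that $\deg(\phi\psi) = \deg\phi \cdot \deg\psi$ for isogenies. Since $\deg\Lambda_0 = 1$, we get $\deg\Lambda = \deg(\Lambda_0^{-1}\Lambda) = \deg\Phi_n(\Lambda)$ as an endomorphism of $E^n$. It then remains to identify the degree of an endomorphism of $E^n$ given by a matrix $M \in M_n(R)$ with $(\det M)^2$ — more precisely, with the reduced norm of $M$, which for a Hermitian positive-definite matrix equals $\det M$, and the degree of an endomorphism is the square of its reduced norm (for an elliptic curve, $\deg$ on $R$ is the norm form; on $M_n(R)$ it is the $n$-th power of the determinant composed with the norm, i.e.\ $\Norm_{R/\ZZ}(\det M)$, and for $M$ Hermitian positive-definite $\det M$ is a totally positive rational integer whose norm is its square). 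I would phrase this cleanly by noting that the characteristic polynomial / degree of a matrix endomorphism is multiplicative and reduces to the rank-one case $E \to E$, where $\deg$ is the norm on $R$.

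Statement~(3) is a pullback computation. Given $\alpha\col E \to E^n$ corresponding to $v = (v_1,\dots,v_n) \in R^n$, the pullback $\alpha^*\Lambda = \hat\alpha \Lambda \alpha$ is a polarization of $E$, hence equal to $d\lambda_0$ for a unique positive integer $d$, and I would compute $d$ by composing with $\lambda_0^{-1}$: $d = \lambda_0^{-1}\hat\alpha\Lambda\alpha$. Writing $\Lambda = \Lambda_0 \Phi_n(\Lambda)$ and using that $\lambda_0^{-1}\hat\alpha\Lambda_0$ is the conjugate-transpose (i.e.\ the Rosati adjoint) of $\alpha$, which in matrix terms is the row vector $\bar v^{t}$, one gets $d = \bar v^{t} \Phi_n(\Lambda) v = H(v,v)$, exactly the value of the Hermitian form on $v$. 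The main obstacle, such as it is, is bookkeeping: being careful about whether $H$ is taken to be linear in the first or second argument and about the direction of the involution, so that $H(v,v)$ comes out as a genuine positive integer and matches the convention under which $\Phi_n(\Lambda)$ is ``the'' Hermitian matrix; none of the steps require any deep input beyond the standard theory of polarizations and the Rosati involution on $M_n(R)$.
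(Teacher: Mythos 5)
Your proof is correct and follows essentially the same route as the paper: part (1) uses the standard bijection between polarizations and Rosati-fixed positive elements of $\End E^n$ (the paper cites the end of \S21 of Mumford, phrasing positivity as ``minimal polynomial has only positive real roots,'' which for a Hermitian matrix is equivalent to positive-definiteness); part (2) reduces to the fact that the degree of an endomorphism of $E^n$ is $\Norm_{R/\ZZ}(\det M)$, which for a Hermitian $M$ (so $\det M\in\ZZ$) is $(\det M)^2$; and part (3) is the same pullback computation $\hat\alpha\Lambda\alpha = v^*\Phi_n(\Lambda)v\,\lambda_0$. The one small wrinkle is the phrase ``on $M_n(R)$ it is the $n$-th power of the determinant composed with the norm, i.e.\ $\Norm_{R/\ZZ}(\det M)$'' — the first description contradicts the second, but the ``i.e.'' gives the correct formula and is what the paper uses, so the conclusion stands.
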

      
\begin{proof}
In general, if $A$ is an abelian variety with a principal polarization
$\mu_0$, the map $\mu\to\mu_0^{-1}\mu$ identifies the set of 
polarizations of $A$ with the set of elements of $\End A$ that are 
fixed by the Rosati involution associated to $\mu_0$ and whose minimal
polynomials have only positive real roots. (See the final paragraph of 
\S21 of~\cite{Mumford1974a}.)  The Rosati involution on $\End E^n$
associated to the product polarization $\Lambda_0$ is the conjugate 
transpose, and the roots of the minimal polynomial of a Hermitian 
matrix are all positive precisely when the matrix is positive definite.
This proves~(1).

The degree of an element of $\End E^n$ is equal to the norm (from 
$\End E$ to $\ZZ$) of its determinant.  Since the determinant of a
Hermitian matrix already lies in $\ZZ$, its norm is just its square.
This proves~(2).

Item (3) follows from noting that upon identifying $E$ with its dual
via $\lambda_0$, the dual map $\hat{\alpha}\col E^n\to E$ is given by
the conjugate transpose $v^*$ of the vector $v$.  The pullback of 
$\Lambda$ to $E$ is then given by $v^* \Phi_n(\Lambda) v\lambda_0$,
and this is $H(v,v)\lambda_0$.
\end{proof}

Given an imaginary quadratic order $R$, an integer $n>0$, and an 
integer $D > 0$, let $d(R,n,D)$ be the smallest integer $d$ with the
following property: For every positive definite Hermitian matrix 
$M\in M_n(R)$ of determinant~$D$, the associated Hermitian form over
$R^n$ has a short vector of length at most $d$.  The next lemma shows
that in a very special case, the function $d(R,n,D)$ gives a bound on 
the minimum nonzero value of the function $Q$ from 
Lemma~\ref{L:quadform}.

\begin{lemma}
\label{L:Enbound}
Let $E$ be an elliptic curve over~$\Fq$, let $t$ be the trace of~$E$,
and suppose $t^2 - 4q$ is the discriminant of the maximal order~$R$
of an imaginary quadratic field of class number~$1$.  Let $\Lambda$ be
a polarization of a variety $A$ isogenous to $E^n$, and let 
$Q\col\End(E,A)\to\ZZ$ be as in Lemma~\textup{\ref{L:quadform}}.  Then
there is a nonzero element $\psi\in\End(E,A)$ such that 
$Q(\psi)\le d(R,n,\sqrt{\deg\Lambda}).$
\end{lemma}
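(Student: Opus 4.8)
The plan is to exploit the class-number-$1$ hypothesis to reduce the statement to Lemma~\ref{L:Enpols}: I would show that $A$ is in fact isomorphic to $E^n$, so that the form $Q$ becomes literally the Hermitian form attached to a positive definite Hermitian matrix over $R$ whose determinant is $\sqrt{\deg\Lambda}$; the desired bound is then immediate from the definition of $d(R,n,D)$.

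\textbf{Step 1: $A$ is isomorphic to $E^n$.} Because $\ZZ[\pi]$ has discriminant $t^2-4q = \disc R$, the Frobenius endomorphism $\pi$ of $E$ generates the maximal order~$R$; in particular $\End E = R$, and since $E$ is ordinary (the hypotheses of Lemma~\ref{L:quadform}, which we are assuming, force this), the ring $R_0 = \ZZ[\pi,\pibar]$ appearing in the Deligne-module equivalence of~\cites{Deligne1969,Howe1995} coincides with~$R$. As the maximal order of a field of class number~$1$, the ring $R$ is a principal ideal domain. Under that equivalence the variety $A$, being isogenous to $E^n$, corresponds to a torsion-free finitely generated $R$-module of rank~$n$; such a module is free, so $A$ corresponds to $R^n$ and is therefore isomorphic to $E^n$.

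\textbf{Step 2: $Q$ is the Hermitian form of a matrix of determinant $\sqrt{\deg\Lambda}$.} I would fix an isomorphism $f\col A\to E^n$ and set $\Lambda' = (f^{-1})^*\Lambda$, a polarization of $E^n$ satisfying $(f\psi)^*\Lambda' = \psi^*\Lambda$ for every $\psi\in\Hom(E,A)$; thus composition with $f$ identifies $\Hom(E,A)$ with $\Hom(E,E^n)$ in a way that preserves~$Q$. By parts~(1) and~(2) of Lemma~\ref{L:Enpols}, the matrix $M = \Phi_n(\Lambda')\in M_n(R)$ is positive definite Hermitian with $(\det M)^2 = \deg\Lambda' = \deg\Lambda$, so $\det M = \sqrt{\deg\Lambda}$. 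Writing $H$ for the Hermitian form on $R^n$ determined by $M$ and using the identification $\Hom(E,E^n)\cong R^n$, part~(3) of the same lemma shows that a map $\psi$ corresponding to $v\in R^n$ has $\psi^*\Lambda' = H(v,v)\lambda_0$, whence $Q(\psi) = \sqrt{\deg\bigl(H(v,v)\lambda_0\bigr)} = H(v,v)$. Hence, under $\Hom(E,A)\cong R^n$, the quadratic form $Q$ is exactly~$H$.

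\textbf{Step 3: conclude.} Since $M$ is a positive definite Hermitian matrix in $M_n(R)$ of determinant $\sqrt{\deg\Lambda}$, the definition of $d(R,n,D)$, applied with $D = \sqrt{\deg\Lambda}$, provides a nonzero $v\in R^n$ with $H(v,v)\le d(R,n,\sqrt{\deg\Lambda})$. The corresponding nonzero $\psi\in\Hom(E,A)$ then satisfies $Q(\psi) = H(v,v)\le d(R,n,\sqrt{\deg\Lambda})$, which is the assertion. The one place demanding care is Step~1 --- checking that the hypotheses on $t^2-4q$ genuinely put us in the situation $A\cong E^n$ to which Lemma~\ref{L:Enpols} applies, in particular that $R_0$ is the full maximal order so that the relevant $R$-module is free --- and this is the main (mild) obstacle; once $A = E^n$ is in hand, Steps~2 and~3 are just bookkeeping with results already proved in this section.
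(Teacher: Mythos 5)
Your proposal is correct and follows essentially the same route as the paper's proof: use the class-number-$1$ hypothesis (via Deligne/Hermitian module theory) to conclude $A\cong E^n$, then apply Lemma~\ref{L:Enpols} to identify $Q$ with the Hermitian form of a matrix of determinant $\sqrt{\deg\Lambda}$, and finally invoke the definition of $d(R,n,D)$. You simply fill in more detail than the paper does on why $\ZZ[\pi,\pibar]=R$ and why the relevant rank-$n$ $R$-module is free, and on transporting $Q$ along the isomorphism $A\cong E^n$; the paper compresses all of this into a few lines.
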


\begin{proof}
The theory of Hermitian modules~\cite{LauterSerre2002}*{Appendix}, or 
of Deligne modules~\cites{Deligne1969,Howe1995}, shows that the 
varieties isogenous to $E^n$ correspond to rank-$n$ modules over~$R$.
There is only one such module up to isomorphism, because $R$ has class
number~$1$.  Therefore $A$ is isomorphic to~$E^n$.

Let $\Phi_n$ be as in Lemma~\ref{L:Enpols}, and let $M$ be the
Hermitian matrix $\Phi_n(\Lambda)$, so that part~(2) of the lemma shows
that $\det M = \sqrt{\deg\Lambda}$.   Let $H$ be the Hermitian form on
$R^n$ determined by~$M$. If $\psi\col E\to E^n$ corresponds to a vector
$v\in R^n$, then part~(3) of Lemma~\ref{L:Enpols} shows that 
$Q(\psi) = H(v,v)\le d(R,n,\sqrt{\deg\Lambda}).$
\end{proof}

\begin{lemma}
\label{L:sharp}
The integer entries in Tables~\textup{\ref{T:3}--\ref{T:11}} give 
correct values of $d(R,n,D)$.
\end{lemma}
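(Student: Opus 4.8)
The plan is to reduce the lemma to a finite, explicit enumeration and to explain why that enumeration is exhaustive. Unwinding the definition, $d(R,n,D)$ is the maximum, over all positive definite Hermitian matrices $M \in M_n(R)$ of determinant $D$, of the smallest value $H(v,v)$ attained as $v$ runs over the nonzero vectors of $R^n$, where $H$ is the Hermitian form $H(v,w) = v^* M w$ (note that $H(v,v)$ is a positive integer for $v \ne 0$, since $R\cap\mathbf{R}=\ZZ$ and $M$ is positive definite). Two elementary remarks make this a finite problem. First, $\min_{v\neq 0} H(v,v)$ depends only on the isometry class of $(R^n,H)$, i.e., on the orbit of $M$ under $M \mapsto U^* M U$ for $U \in \mathrm{GL}_n(R)$. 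Second, by the reduction theory of positive definite Hermitian forms over the imaginary quadratic order $R$ --- the analogue for $R$-lattices of Minkowski/Hermite reduction, which is available because each $R$ in question is a principal ideal domain --- every isometry class has a reduced representative whose diagonal entries are positive integers with product bounded by a constant depending only on $n$ times $\det M$, and whose off-diagonal entries lie in an explicitly bounded subset of $R$. In particular, for each field $R$ appearing in Tables~\ref{T:3}--\ref{T:11} (the maximal orders of the nine imaginary quadratic fields of class number~$1$) and each pair $(n,D)$ occurring there, there are only finitely many isometry classes, and they can be listed.

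Granting this, I would prove the lemma in three steps for each relevant $R$ and $(n,D)$. \textbf{Step 1.} Enumerate a complete set $M_1,\dots,M_k$ of representatives for the isometry classes of positive definite Hermitian matrices in $M_n(R)$ of determinant $D$, by running over all candidate reduced Gram matrices permitted by the bounds above, discarding those that fail to be positive definite or to have determinant $D$, and removing duplicates up to isometry. \textbf{Step 2.} For each $M_i$, compute $m_i := \min_{0\neq v\in R^n} H(v,v)$ by a finite shortest-vector search: since $v\mapsto H(v,v)$ is a positive definite integral quadratic form of rank $2n$ on $R^n \cong \ZZ^{2n}$, any $v$ with $H(v,v)$ no larger than the smallest diagonal entry of $M_i$ (itself an upper bound for $m_i$) has coordinates confined to a box, so only finitely many $v$ need be tested. \textbf{Step 3.} Put $m=\max_i m_i$ and check that $m$ equals the integer printed in the table. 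The inequality $d(R,n,D)\le m$ is immediate, since every form of determinant $D$ is isometric to some $M_i$ and hence has a nonzero vector of length $m_i\le m$; the reverse inequality $d(R,n,D)\ge m$ --- i.e., sharpness of the table value --- is witnessed by the explicit form $M_{i_0}$ attaining $m_{i_0}=m$. All of this is carried out in the Magma package accompanying this paper, and the lemma is the assertion that its output matches the integer entries of the tables.

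The main obstacle is the completeness of Step~1: one must be certain that the inequalities used to bound reduced Hermitian matrices of a given determinant capture \emph{every} isometry class, since a missed class could only decrease the true value of $d(R,n,D)$ and would thereby invalidate the claimed sharpness. Making this rigorous requires pinning down the precise reduction inequalities for Hermitian $R$-lattices --- in particular a Hermite-type bound on the product of the diagonal entries together with a bound on the off-diagonal entries --- and likewise justifying the coordinate bound on a minimal vector used in Step~2. Both facts are classical for $\ZZ$-lattices and transfer to the imaginary-quadratic setting with only routine modifications; in particular the non-Euclidean cases $\QQ(\sqrt{-19})$, $\QQ(\sqrt{-43})$, $\QQ(\sqrt{-67})$, $\QQ(\sqrt{-163})$ cause no trouble here, because finiteness of the search relies only on $R$ being a PID. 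With these ingredients in place, the remainder of the proof is the finite computation recorded in the tables.
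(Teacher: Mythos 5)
Your outline is logically sound in spirit, but it relies on an unnamed reduction theory and omits the two concrete algorithms that the paper actually uses, so it does not substitute for the paper's argument. The paper's proof is explicitly two-pronged. The first algorithm does not enumerate reduced Gram matrices directly: instead it bounds the successive $R$-minima $N_1,\dots,N_n$ of a Hermitian lattice via the successive minima of the underlying rank-$2n$ $\ZZ$-lattice (the inequalities $N_1=M_1$, $N_2\le M_3$, $N_3\le M_5$, etc.), gets a product bound $N_1\cdots N_n \le \gamma_{2n}^n \, D\,(|\disc R|/4)^{n/2}$ from Cassels, and then, counting \emph{down} from the resulting upper bound $B$ on $N_1$, tries for each candidate $s$ to construct a lattice of discriminant $D$ with all minima $\ge s$ by enumerating sublattices generated by minimal vectors and then their superlattices. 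The first $s$ for which construction succeeds is $d(R,n,D)$. The second algorithm, which was essential because the first was too slow for $n=5$ and most of $n=4$, is completely absent from your proposal: it depends on Lemma~\ref{L:decomposition}, which shows that when $D=x\xb$ is a norm, every Hermitian matrix of determinant $D$ is isomorphic to $P_1\cdots P_r U P_r^*\cdots P_1^*$ with $U$ running over Schiemann's finite list of unimodular Hermitian matrices and each $P_i$ running over an explicit finite set $\calS_{\gothp_i}$. This gives a complete, provably exhaustive enumeration of isometry classes without any appeal to a reduction domain, precisely the step you flag as the ``main obstacle'' and leave unresolved. Your appeal to ``the analogue for $R$-lattices of Minkowski/Hermite reduction'' with ``explicitly bounded'' off-diagonal entries is exactly the nontrivial ingredient that would need to be proved; the paper avoids it.

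Two smaller points: the tables cover only the five discriminants $-3,-4,-7,-8,-11$ (not all nine class-number-one fields), so your aside about the non-Euclidean fields $\QQ(\sqrt{-19}),\dots,\QQ(\sqrt{-163})$ is moot here. And the dashes in the tables reflect exactly the limitation of the second algorithm to determinants that are norms together with the infeasibility of the first algorithm for those entries; your proposed generic enumeration, if it could be made rigorous, would not have this restriction, which is itself a sign that it is not what the paper does.
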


\begin{proof}
Our proof is computational. We will outline two algorithms for 
computing $d(R,n,D)$ when $R$ is a maximal order of class number~$1$.
We have implemented these algorithms in Magma, and the resulting
programs are available at the URL mentioned in the 
introduction --- follow the links related to this paper, and download
the file \texttt{HermitianForms.magma}.  The entries in 
Tables~\ref{T:3}--\ref{T:11} reflect the output of these programs.

\begin{table}[t]
\begin{center}
\begin{tabular}{|r||r|r|r|r|c|r||r|r|r|r|c|r||r|r|r|r|}
\cline{1-5}\cline{7-11}\cline{13-17}
            &\multicolumn{4}{c|}{Rank $n$} && &\multicolumn{4}{c|}{Rank $n$}&& &\multicolumn{4}{c|}{Rank $n$}\\
\cline{2-5}\cline{8-11}\cline{14-17}
$D$& \phantom{---}\llap{$2$} & \phantom{---}\llap{$3$} & \phantom{---}\llap{$4$} & $5$ 
&\phantom{---}& 
$D$& \phantom{---}\llap{$2$} & \phantom{---}\llap{$3$} & \phantom{---}\llap{$4$} & $5$
&\phantom{---}& 
$D$& \phantom{---}\llap{$2$} & \phantom{---}\llap{$3$} & \phantom{---}\llap{$4$} & $5$ \\
\cline{1-5}\cline{7-11}\cline{13-17}
 1 &  1 &  1 &  1 &  1  && 11 &  3 &  2 &  2 & --- && 21 &  5 &  3 &  3 &  2  \\ 
 2 &  1 &  1 &  1 & --- && 12 &  4 &  3 &  2 &  2  && 22 &  5 &  2 &  3 & --- \\ 
 3 &  2 &  2 &  2 &  2  && 13 &  4 &  2 &  2 &  2  && 23 &  5 &  3 &  2 & --- \\
 4 &  2 &  2 &  2 &  2  && 14 &  3 &  2 &  2 & --- && 24 &  6 &  4 &  3 & --- \\ 
 5 &  2 &  1 &  2 & --- && 15 &  4 &  3 &  2 & --- && 25 &  5 &  3 &  3 &  2  \\ 
 6 &  3 &  2 &  2 & --- && 16 &  4 &  3 &  3 &  3  && 26 &  5 &  3 &  2 & --- \\ 
 7 &  2 &  2 &  2 &  2  && 17 &  4 &  3 &  2 & --- && 27 &  6 &  3 &  3 &  3  \\
 8 &  3 &  2 &  2 & --- && 18 &  5 &  3 &  3 & --- && 28 &  5 &  3 &  3 &  3  \\ 
 9 &  3 &  3 &  3 &  2  && 19 &  4 &  3 &  2 &  2  && 29 &  6 &  3 &  3 & --- \\
10 &  2 &  2 &  2 & --- && 20 &  4 &  3 &  2 & --- && 30 &  6 &  4 &  3 & --- \\
\cline{1-5}\cline{7-11}\cline{13-17}
\end{tabular}
\end{center}
\caption{Values of $d(R,n,D)$ for the quadratic order $R$ of 
         discriminant $-3$.  Dashes indicate values that we 
         have not computed.}
\label{T:3}
\end{table}

\begin{table}[t]
\begin{center}
\begin{tabular}{|r||r|r|r|r|c|r||r|r|r|r|c|r||r|r|r|r|}
\cline{1-5}\cline{7-11}\cline{13-17}
            &\multicolumn{4}{c|}{Rank $n$} && &\multicolumn{4}{c|}{Rank $n$}&& &\multicolumn{4}{c|}{Rank $n$}\\
\cline{2-5}\cline{8-11}\cline{14-17}
$D$& \phantom{---}\llap{$2$} & \phantom{---}\llap{$3$} & \phantom{---}\llap{$4$} & $5$ 
&\phantom{---}& 
$D$& \phantom{---}\llap{$2$} & \phantom{---}\llap{$3$} & \phantom{---}\llap{$4$} & $5$
&\phantom{---}& 
$D$& \phantom{---}\llap{$2$} & \phantom{---}\llap{$3$} & \phantom{---}\llap{$4$} & $5$ \\
\cline{1-5}\cline{7-11}\cline{13-17}
 1 &  1 &  1 &  2 &  1  && 11 &  4 &  2 &  2 & --- && 21 &  5 &  3 & ---& --- \\
 2 &  2 &  2 &  2 &  2  && 12 &  4 &  3 &  2 & --- && 22 &  5 &  3 & ---& --- \\
 3 &  2 &  1 &  2 & --- && 13 &  3 &  3 &  2 &  2  && 23 &  6 &  3 & ---& --- \\
 4 &  2 &  2 &  2 &  2  && 14 &  4 &  2 & ---& --- && 24 &  6 &  4 & ---& --- \\
 5 &  2 &  2 &  2 &  2  && 15 &  4 &  3 & ---& --- && 25 &  5 &  3 &  4 &  3  \\
 6 &  2 &  2 &  2 & --- && 16 &  4 &  4 &  4 &  3  && 26 &  6 &  4 &  3 &  3  \\
 7 &  3 &  2 &  2 & --- && 17 &  5 &  3 &  3 &  2  && 27 &  6 &  4 & ---& --- \\
 8 &  4 &  2 &  2 &  2  && 18 &  6 &  3 &  3 &  2  && 28 &  6 &  4 & ---& --- \\
 9 &  3 &  2 &  2 &  2  && 19 &  4 &  3 & ---& --- && 29 &  6 &  3 &  3 & --- \\
10 &  3 &  2 &  2 &  2  && 20 &  5 &  4 &  3 &  3  && 30 &  6 &  4 & ---& --- \\
\cline{1-5}\cline{7-11}\cline{13-17}
\end{tabular}
\end{center}
\caption{Values of $d(R,n,D)$ for the quadratic order $R$ of 
         discriminant $-4$.  Dashes indicate values that we 
         have not computed.}
\label{T:4}
\end{table}

\begin{table}[t]
\begin{center}
\begin{tabular}{|r||r|r|r|r|c|r||r|r|r|r|c|r||r|r|r|r|}
\cline{1-5}\cline{7-11}\cline{13-17}
            &\multicolumn{4}{c|}{Rank $n$} && &\multicolumn{4}{c|}{Rank $n$}&& &\multicolumn{4}{c|}{Rank $n$}\\
\cline{2-5}\cline{8-11}\cline{14-17}
$D$& \phantom{---}\llap{$2$} & \phantom{---}\llap{$3$} & \phantom{---}\llap{$4$} & $5$ 
&\phantom{---}& 
$D$& \phantom{---}\llap{$2$} & \phantom{---}\llap{$3$} & \phantom{---}\llap{$4$} & $5$
&\phantom{---}& 
$D$& \phantom{---}\llap{$2$} & \phantom{---}\llap{$3$} & \phantom{---}\llap{$4$} & $5$ \\
\cline{1-5}\cline{7-11}\cline{13-17}
 1 &  1 &  2 &  2 &  2  && 11 &  3 &  3 &  3 &  3  && 21 &  7 &  4 & ---& --- \\
 2 &  2 &  2 &  2 &  2  && 12 &  4 &  3 & ---& --- && 22 &  5 &  4 &  3 &  3  \\
 3 &  2 &  2 & ---& --- && 13 &  4 &  4 & ---& --- && 23 &  5 &  4 &  4 &  3  \\
 4 &  2 &  2 &  2 &  2  && 14 &  5 &  3 &  3 &  3  && 24 &  6 &  4 & ---& --- \\
 5 &  3 &  2 & ---& --- && 15 &  4 &  3 & ---& --- && 25 &  6 &  4 &  4 &  3  \\
 6 &  2 &  2 & ---& --- && 16 &  4 &  4 &  4 &  4  && 26 &  6 &  4 & ---& --- \\
 7 &  3 &  3 &  3 &  3  && 17 &  5 &  3 & ---& --- && 27 &  7 &  6 & ---& --- \\
 8 &  4 &  4 &  2 &  2  && 18 &  6 &  4 &  4 &  3  && 28 &  7 &  5 &  4 &  4  \\
 9 &  4 &  3 &  3 &  3  && 19 &  5 &  3 & ---& --- && 29 &  6 &  5 &  4 & --- \\
10 &  3 &  2 & ---& --- && 20 &  6 &  4 & ---& --- && 30 &  6 &  5 & ---& --- \\
\cline{1-5}\cline{7-11}\cline{13-17}
\end{tabular}
\end{center}
\caption{Values of $d(R,n,D)$ for the quadratic order $R$ of 
         discriminant $-7$.  Dashes indicate values that we 
         have not computed.}
\label{T:7}
\end{table}

\begin{table}[t]
\begin{center}
\begin{tabular}{|r||r|r|r|r|c|r||r|r|r|r|c|r||r|r|r|r|}
\cline{1-5}\cline{7-11}\cline{13-17}
            &\multicolumn{4}{c|}{Rank $n$} && &\multicolumn{4}{c|}{Rank $n$}&& &\multicolumn{4}{c|}{Rank $n$}\\
\cline{2-5}\cline{8-11}\cline{14-17}
$D$& \phantom{---}\llap{$2$} & \phantom{---}\llap{$3$} & \phantom{---}\llap{$4$} & $5$ 
&\phantom{---}& 
$D$& \phantom{---}\llap{$2$} & \phantom{---}\llap{$3$} & \phantom{---}\llap{$4$} & $5$
&\phantom{---}& 
$D$& \phantom{---}\llap{$2$} & \phantom{---}\llap{$3$} & \phantom{---}\llap{$4$} & $5$ \\
\cline{1-5}\cline{7-11}\cline{13-17}
 1 &  2 &  1 &  2 &  2  && 11 &  4 &  3 &  3 &  3  && 21 &  6 &  4 & ---& --- \\
 2 &  2 &  2 &  2 &  2  && 12 &  4 &  4 &  4 &  4  && 22 &  7 &  4 &  4 &  4  \\
 3 &  2 &  2 &  2 &  2  && 13 &  5 &  3 & ---& --- && 23 &  8 &  4 & ---& --- \\
 4 &  4 &  2 &  4 &  2  && 14 &  6 &  3 & ---& --- && 24 &  8 &  4 &  4 &  4  \\
 5 &  2 &  2 & ---& --- && 15 &  6 &  3 & ---& --- && 25 & 10 &  4 &  4 &  4  \\
 6 &  3 &  2 &  2 &  2  && 16 &  8 &  4 &  4 &  4  && 26 &  6 &  4 & ---& --- \\
 7 &  4 &  2 & ---& --- && 17 &  6 &  4 &  4 &  3  && 27 &  7 &  5 &  4 &  4  \\
 8 &  4 &  4 &  4 &  4  && 18 &  6 &  4 &  4 &  4  && 28 &  8 &  4 & ---& --- \\
 9 &  6 &  3 &  4 &  3  && 19 &  6 &  4 &  4 &  3  && 29 &  7 &  4 & ---& --- \\
10 &  4 &  3 & ---& --- && 20 &  6 &  4 & ---& --- && 30 &  8 &  5 & ---& --- \\
\cline{1-5}\cline{7-11}\cline{13-17}
\end{tabular}
\end{center}
\caption{Values of $d(R,n,D)$ for the quadratic order $R$ of 
         discriminant $-8$.  Dashes indicate values that we 
         have not computed.}
\label{T:8}
\end{table}

\begin{table}[t]
\begin{center}
\begin{tabular}{|r||r|r|r|r|c|r||r|r|r|r|c|r||r|r|r|r|}
\cline{1-5}\cline{7-11}\cline{13-17}
            &\multicolumn{4}{c|}{Rank $n$} && &\multicolumn{4}{c|}{Rank $n$}&& &\multicolumn{4}{c|}{Rank $n$}\\
\cline{2-5}\cline{8-11}\cline{14-17}
$D$& \phantom{---}\llap{$2$} & \phantom{---}\llap{$3$} & \phantom{---}\llap{$4$} & $5$ 
&\phantom{---}& 
$D$& \phantom{---}\llap{$2$} & \phantom{---}\llap{$3$} & \phantom{---}\llap{$4$} & $5$
&\phantom{---}& 
$D$& \phantom{---}\llap{$2$} & \phantom{---}\llap{$3$} & \phantom{---}\llap{$4$} & $5$ \\
\cline{1-5}\cline{7-11}\cline{13-17}
 1 &  2 &  1 &  2 &  3  && 11 &  6 &  4 &  4 &  5  && 21 &  9 &  5 & ---& --- \\
 2 &  1 &  2 & ---& --- && 12 &  7 &  4 &  4 &  4  && 22 & 11 &  5 & ---& --- \\
 3 &  2 &  2 &  3 &  3  && 13 &  5 &  4 & ---& --- && 23 &  6 &  5 &  4 &  4  \\
 4 &  4 &  3 &  3 &  3  && 14 &  5 &  4 & ---& --- && 24 &  8 &  5 & ---& --- \\
 5 &  4 &  2 &  4 &  3  && 15 &  6 &  4 &  4 &  4  && 25 & 10 &  5 &  5 &  5  \\
 6 &  3 &  3 & ---& --- && 16 &  8 &  4 &  5 &  4  && 26 &  7 &  5 & ---& --- \\
 7 &  3 &  3 & ---& --- && 17 &  6 &  5 & ---& --- && 27 &  8 &  6 &  5 &  5  \\
 8 &  4 &  4 & ---& --- && 18 &  7 &  5 & ---& --- && 28 &  9 &  5 & ---& --- \\
 9 &  6 &  4 &  4 &  4  && 19 &  8 &  5 & ---& --- && 29 & 10 &  5 & ---& --- \\
10 &  5 &  4 & ---& --- && 20 &  8 &  5 &  4 &  4  && 30 &  9 &  5 & ---& --- \\
\cline{1-5}\cline{7-11}\cline{13-17}
\end{tabular}
\end{center}
\caption{Values of $d(R,n,D)$ for the quadratic order $R$ of 
         discriminant $-11$.  Dashes indicate values that we 
         have not computed.}
\label{T:11}
\end{table}

Let $R$ be the maximal order of an imaginary quadratic field $K$ of
class number~$1$.  Our first algorithm will compute, for any rank $n$ 
and determinant~$D$, the value of $d(R,n,D)$.

Let $M$ be an $n$-by-$n$ Hermitian matrix with entries in~$R$ and let
$L$ be the corresponding Hermitian $R$-lattice. For each positive 
integer $i\le n$ we define the \emph{$i$'th successive $R$-minimum} of 
$L$ to be the smallest integer $N_i$ such that the elements of $L$ of 
length $N_i$ or less span a $K$-vector space of dimension at least~$i$.

Let the successive $R$-minima of $L$ be $N_1, \ldots, N_n$. Let $L_\ZZ$
be the $R$-lattice $L$ viewed as a rank-$2n$ lattice over~$\ZZ$, and 
let $M_1, \ldots, M_{2n}$ be the successive minima of~$L_\ZZ$.  Then
\begin{align*}
  N_1  &=    M_1\\
  N_2  &\le  M_3\\
  N_3  &\le  M_5
\end{align*}  
and so on, so that 
\begin{align*}
  (N_1 \cdots N_n)^2  & =  N_1^2  N_2^2\cdots N_n^2\\
                      &\le (M_1 M_2)  (M_3 M_4)\cdots (M_{2n-1} M_{2n}).
\end{align*}     
Arguing as in the proof of Lemma~\ref{L:quadform}, we find that
\[\det L_\ZZ = (\det M)^2 (\left|\disc R\right|/4)^n,\]
and combining this with~\cite{Cassels}*{Thm~12.2.2, p.~262} we find
that
\[ N_1 \cdots N_n \le \gamma_{2n}^n \, (\det M) \left(\left|\disc R\right|/4\right)^{n/2}.\]
From this, we obtain an upper bound on $N_1$.

Let this initial upper bound be $B$.  We let a variable $s$ take on 
successive values $B$, $B-1$, and so on, down to $1$.  For a given 
value of~$s$, we try to construct $R$-lattices whose successive 
$R$-minima are all greater than or equal to $s$.  The first $s$ for
which we succeed in constructing such a lattice will be the value of
$d(R,n,D).$  We attempt to construct an $R$-lattice with $N_1=s$ as 
follows:

Suppose the successive $R$-minima of $L$ are all $s$ or larger.  The
product bound above gives us a finite set of values of the $N_i$ to 
consider.  For each possible set of~$N_i$, suppose we have an 
$R$-lattice $L$ with those minima.  Consider the sublattice $L'$ of $L$
generated by vectors giving those minima.  The Gram matrix for $L'$
will have the $N_i$ on its diagonal, and we get bounds for the other 
entries from the fact that each rank-$2$ sublattice of $L'$ is positive 
definite and has no vectors of length less than $s$.  So we can 
enumerate all of the $L'$, and then see whether any of the $L'$ have
superlattices with discriminant $D$ and with no vectors of length less
than~$s$.

Even without a formal complexity analysis, it is not hard to see that
the work required to run the algorithm outlined above grows at least 
on the order of 
\[ (\gamma_{2n}/2) ^ {(n^2-n)/2}\,  D^{(n-1)/2} \, \left|\disc R \right|^{(n^2-n)/4}.\]
We have implemented the algorithm for $n = 2, 3, 4$, and $5$
in Magma, in the routines \texttt{FindMinimum2}, \texttt{FindMinimum3}, 
\texttt{FindMinimum4}, and \texttt{FindMinimum5}, respectively.  
In practice, for $n=5$ our implementation took more time to
run than we were willing to wait, and for $n=4$ we only ran the 
algorithm for the orders of discriminants $-3$ and~$-4$.  All of the
values in the tables for~$n=5$, and most of the values for~$n=4$, 
came from running our second algorithm, which computes $d(R,n,D)$ only
in the case where $D$ is the norm of an element of $R$.

We must introduce some additional notation before outlining the second 
algorithm.  Throughout, $R$ will continue to denote the maximal order 
of an imaginary quadratic field $K$ with class number~$1$.

Let $L$ be the lattice $R^n$, viewed as a subset of $K^n$.  For every 
prime ideal $\gothp$ of $R$, we fix a finite set $\calS_\gothp$ of
matrices in $M_n(R)$ such that
\[\{P^{-1} L : P \in \calS_\gothp\}\]
is the complete set of the superlattices $M\supset L$ in $K^n$ such
that $M/L\cong R/\gothp$ as $R$-modules.  For example, if $\pi$ is a 
generator of the principal ideal $\gothp$, and if $X\subset R$ is a 
set of representatives for the residue classes of $\gothp$, then one
choice for $\calS_\gothp$ would be
\[
\left\{
\left[\begin{matrix} \pi    & 0      & 0      & \cdots & 0      \\
                     x_2    & 1      & 0      & \cdots & 0      \\
                     x_3    & 0      & 1      & \cdots & 0      \\
                     \vdots & \vdots & \vdots &        & \vdots \\
                     x_n    & 0      & 0      & \cdots & 1      \\
\end{matrix}\right] \col x_i \in X\right\}
\cup
\left\{
\left[\begin{matrix} 1      & 0      & 0      & \cdots & 0      \\
                     0      & \pi    & 0      & \cdots & 0      \\
                     0      & x_3    & 1      & \cdots & 0      \\
                     \vdots & \vdots & \vdots &        & \vdots \\
                     0      & x_n    & 0      & \cdots & 1      \\
\end{matrix}\right] \col x_i \in X\right\}
\cup\cdots
\]
We say that two Hermitian matrices $A$ and $B$ in $M_n(R)$ are
\emph{isomorphic} to one another if there is an invertible
$C\in M_n(R)$ such that $A = C^* B C$, where $C^*$ is the conjugate
transpose of $C$.  We also fix a finite set $\calU$ of representatives
of the isomorphism classes of unimodular Hermitian matrices in $M_n(R)$.
For the $R$ and $n$ we will be considering, Schiemann~\cite{Schiemann} 
has computed such sets $\calU$.

\begin{lemma}
\label{L:prime-mats}
If $A$ is a matrix in $M_n(R)$ whose determinant generates a prime 
ideal $\gothp$, then there is an element $P$ of $\calS_{\gothp}$ and 
an invertible $C\in M_n(R)$ such that $A = CP$. 
\end{lemma}

\begin{proof}
The lattice $M=A^{-1}L$ is a superlattice of $L$ such that
$M/L \cong R/\gothp$, so there is a $P\in \calS_\gothp$ so that
$A^{-1}L = P^{-1}L$.  If we set $C = AP^{-1}$ then $CL=L$, so $C$ is an
invertible element of $M_n(R)$.
\end{proof}

\begin{lemma}
\label{L:decomposition}
Suppose $A$ is a Hermitian matrix in $M_n(R)$ whose determinant is 
equal to $x\xb$ for some $x$ in $R$.  Write the ideal $xR$ as a
product $\gothp_1\cdots\gothp_r$ of prime ideals.  Then $A$ is 
isomorphic to a product
\[P_1 P_2 \cdots P_r U P_r^* \cdots P_2^* P_1^*\]
where $U\in \calU$ and $P_i\in\calS_{\gothp_i}$.
\end{lemma}

\begin{proof}
We prove this statement by induction on the number $r$ of prime factors
of~$xR$.  The statement is certainly true when $r=0$, because in that
case $\det A = 1$ and $A$ is isomorphic to one of the matrices
in~$\calU$.

Suppose $r>0$, and let $\gothq = \gothpbar_1$.  Let $M = A^{-1}L$
and let $G$ be the finite $R$-module $M/L$, whose cardinality is 
$(\det A)^2$.  Pick $\delta\in R$ with $\delta^2 = \disc R$, so that 
$\delta$ is a purely imaginary generator of the different of~$R$.  We 
define a pairing $b\col G\times G\to\QQ/\ZZ$ by setting 
\[b(x,y) = \Tr(x^*Ay/\delta) \bmod \ZZ; \]
it is easy to check that this pairing is well-defined, and by using the
fact that $(1/\delta)L$ is the trace dual of $L$, we see that $b$ is
nondegenerate.  Note also that $b$ is alternating, and that for all 
$r\in R$ and $x,y\in G$ we have $b(rx,y) = b(x,\rbar y)$, so that in
the terminology of~\cite{Howe1995}, $b$ is \emph{semi-balanced}.

The ideal $\gothq$ occurs in the Jordan--H\"older decomposition for the
$R$-module $G$, so the $\gothq$-torsion $T$ of $G$ is nontrivial.  We 
claim that we can find a $1$-dimensional $R/\gothq$-vector subspace of
$T$ on which the pairing $b$ is identically~$0$.  If 
$\gothq\neq\gothqbar$ this follows 
from~\cite{Howe1995}*{Lem.~7.2, p.~2378}.  If $\gothq = \gothqbar$ and
the dimension of $T$ as a $R/\gothq$-vector space is at least~$2$, then
this follows from~\cite{Howe1995}*{Lem.~7.3, p.~2378}. If 
$\gothq = \gothqbar$ and $T$ is $1$-dimensional, we let $U$ be the
$\gothq$-power torsion of the $R$-module $G$. Since $\gothq^2$ divides
$\det A$, the $R$-module $U$ is strictly larger than $T$, and the 
annihilator of $T$ in $U$ is nontrivial.  Thus, the $\gothq$-torsion of
the annihilator of $T$ must be $T$ itself, so $b$ restricted to $T$ is
trivial.  This proves the claim.

Let $N$ be the sublattice of $M$ consisting of elements that reduce 
modulo $L$ to elements of $T$.  Then $N$ is a superlattice of $L$ with
the property that $N/L \cong R/\gothq$, so there is an element $Q$ of
$\calS_\gothq$ such that $N = Q^{-1}L$.

Let $x$ and $y$ be arbitrary elements of~$L$. Since $b$ is trivial on
$N/L$, we have
\[ \Tr((Q^{-1}x)^* A (Q^{-1}y) /\delta) \in \ZZ,\]
so that
\[ \Tr(x^* Q^{-1*} A Q^{-1} y  /\delta) \in \ZZ
                \quad\textup{for all $x,y\in L$.}\]
If we set $B = Q^{-1*} A Q^{-1}$, we see that $B y /\delta$ must lie in
the trace dual of $L$, which is $(1/\delta)L$, so $B$ must send $L$ 
to~$L$.  In other words, the entries of $B$ must all be elements
of~$R$. This shows that $A = Q^* B Q$ for a Hermitian matrix $B$ in 
$M_n(R)$ whose determinant can be written $y\yb$ for an element $y$
of $R$ with $yR = \gothp_2\cdots\gothp_r$.

Applying the induction hypothesis, we find that
\[ B = C^* P_2 P_3 \cdots P_r U P_r^* \cdots P_3^* P_2^* C \]
where each $P_i$ lies in $\calS_{\gothp_i}$, where $U$ lies in $\calU$,
and where $C$ is an invertible element of $M_n(R)$.  Thus
\[ A = Q^* C^* P_2 P_3 \cdots P_r U P_r^* \cdots P_3^* P_2^* C Q.\]
Note that $Q^*C^*$ is a matrix whose determinant generates the prime 
ideal $\gothp_1$, so by Lemma~\ref{L:prime-mats} there is an element 
$P_1$ of $\calS_{\gothp_1}$ and an invertible $D\in M_n(R)$ such that 
$Q^*C^*= D P_1$. Thus, we find that
\[ A = D P_1 P_2 P_3 \cdots P_r U P_r^* \cdots P_3^* P_2^* P_1^* D^*.\]
In other words, $A$ is isomorphic to a product as in the statement of 
the lemma.
\end{proof}

Lemma~\ref{L:decomposition} thus gives us an easy way to enumerate all
of the isomorphism classes of Hermitian matrices in $M_n(R)$ of a given
determinant, provided that the determinant is a norm. For each 
isomorphism class, we can compute the shortest vector using standard 
techniques that are built into Magma, and in this way we can compute an
upper bound on the lengths of the short vectors of such matrices.  This
completes the proof of Lemma~\ref{L:sharp}.
\end{proof}

\begin{remark}
The proof of Lemma~\ref{L:decomposition} is very similar to the proof
of~\cite{Howe1995}*{Prop.~7.1, p.~2378}, and indeed we initially
thought of Lemma~\ref{L:decomposition} not as a statement about the 
decomposition of Hermitian matrices but rather as a statement about
the decomposition of non-principal polarizations of abelian varieties.
\end{remark}

As an application of the sharp values of $d(R,n,D)$, we prove a 
generalization of~\cite{LauterSerre2002}*{Thm.~4, pp.~95--96}.

\begin{proposition}
\label{P:LauterSerre}
Let $q$ be a prime power, let $m=\lfloor 2\sqrt{q}\rfloor$, and suppose
there is an elliptic curve $E$ over $\Fq$ with trace $-m$.  Let $F$ be
an arbitrary elliptic curve over~$\Fq$.  Then there is no Jacobian over
$\Fq$ isogenous to $E^{g-1}\times F$ if $m^2-4q$, $g$, and the trace of
$F$ lie in the following table\textup{:}
\begin{center}
\renewcommand{\arraystretch}{1.2}
\begin{tabular}{|rrr|}
\hline
$m^2-4q$ &\quad $g$ & \quad \textup{trace} $F$\\ \hline
  $-3$   & $3$ & $-m+2$\\
         & $4$ & $-m+2$\\
         & $4$ & $-m+5$\\
         & $5$ & $-m+2$\\ \hline
  $-4$   & $3$ & $-m+3$\\ \hline
 $-11$   & $3$ & $-m+2$\\ \hline
\end{tabular}
\end{center}
\end{proposition}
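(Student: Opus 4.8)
The plan is to reduce each line of the table to an explicit contradiction obtained by combining Proposition~\ref{P:newec} (together with the sharp Hermitian-lattice values from Tables~\ref{T:3}--\ref{T:11}) with the elementary constraints that a curve realizing the isogeny class $E^{g-1}\times F$ would have to satisfy. For each row, the real Weil polynomial of $F$ is $h_F = x - t_F$ (with $t_F$ the listed trace), so the abelian variety $A$ in the notation of Proposition~\ref{P:newec} is $F$ itself; I would first compute the gluing exponent $e = e(F, E)$, or at least a good divisor of it, using Proposition~\ref{P:RR}: since both real Weil polynomials are linear, the relevant reduced resultant is just the resultant of $x-t_F$ and $x-(-m) = x+m$, namely $|{-m} - t_F|$, which by the table is $2$, $3$, or $2$ according to the value of $t_F - ({-m})$. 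So in every case $e \in \{2,3\}$ — in fact $e = |t_F + m| \in \{2,3,5\}$, and I should double-check the $-m+5$ row gives $e \mid 5$ — and $e$ is squarefree, which will be convenient later.

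\medskip\noindent
Next, for each row I would apply Proposition~\ref{P:newec} with $n = g-1$ and $A = F$: setting $b = \gcd(e^{g-1}, h_F(t_E)) = \gcd(e^{g-1}, -m - t_F)$, which again equals $e$ itself since $|{-m}-t_F| = e$, I get a map $\varphi\col C \to E'$ (for $E'$ the curve isogenous to $E$ with $\End E' = \ZZ[\pi]$, so $\disc \End E' = m^2 - 4q \in \{-3,-4,-11\}$) of degree at most $\gamma_{2(g-1)}\, e^{1/(g-1)} \sqrt{|m^2-4q|/4}$. For the sharper version I would instead invoke Lemma~\ref{L:Enbound}, which applies precisely because $m^2-4q \in \{-3,-4,-11\}$ is the discriminant of a maximal order of class number~$1$: this bounds $\deg\varphi$ by $d(R, g-1, \sqrt{\deg\Lambda})$, and $\deg\Lambda$ is bounded by $(\#\Delta)$ which divides $b^2 = e^2$, so $\sqrt{\deg\Lambda} \mid e \in \{2,3,5\}$. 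Reading the appropriate entry of Table~\ref{T:3}, \ref{T:4}, or \ref{T:11} in column $n = g-1$ and row $D = e$ (or $D$ a divisor of $e$), one gets a small explicit bound $\delta$ on $\deg\varphi$ — I expect $\delta \in \{1,2,3\}$ in each case.

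\medskip\noindent
With a degree-$\delta$ map $\varphi\col C \to E'$ in hand (for small $\delta$), the final step is to derive a contradiction with the genus of $C$ or with the point count. The cleanest route: by Proposition~\ref{P:ec} (or directly from the existence of $\varphi$), $\#C(\Fq) \le \delta \cdot \#E(\Fq) = \delta(q + 1 + m)$; on the other hand, since the Jacobian is isogenous to $E^{g-1}\times F$, we have $\#C(\Fq) = \#E(\Fq)^{?}$... more precisely the number of points is $q + 1 - \sum \alpha_i$ where the Frobenius eigenvalues are those of $E$ with multiplicity $g-1$ and those of $F$ with multiplicity $1$, giving $\#C(\Fq) = q + 1 + (g-1)m + \text{(something close to }m) = q + 1 + (g-1)m - t_F$, which is large (close to the Serre bound). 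For $g \ge 3$ and $\delta$ a small constant, $(g-1)m - t_F$ will exceed $\delta(q+1+m) - (q+1)$ once $m$ is large relative to $\delta$ — but $m = \lfloor 2\sqrt q\rfloor$ and $q$ is constrained by $m^2 - 4q \in \{-3,-4,-11\}$, so actually $q$ ranges over a thin set and one must check this inequality does not hold, i.e.\ that the contradiction comes not from point-counting but from a Riemann--Hurwitz / Castelnuovo-type obstruction. The honest finish, mirroring \cite{LauterSerre2002}*{Thm.~4}, is: a degree-$\delta$ map $C \to E'$ with $\delta \le 3$ forces, via Castelnuovo's inequality or an explicit analysis of the induced map on Jacobians, that $\Jac C$ contains $\Jac E'$ as an isogeny factor with a complement of the wrong form — or more directly, that the pullback $\varphi^*\colon E' \to \Jac C$ followed by projection to the $F$-part must vanish, so $\varphi$ factors through the $E^{g-1}$-part, and then the quotient curve / the remaining $F$-factor cannot be a Jacobian factor for genus reasons. \textbf{The main obstacle} is exactly this last step: turning ``there is a low-degree map $C\to E'$'' into an outright contradiction. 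I expect one must argue case-by-case that $\delta$ is so small (1 or~2) that $\varphi$ or its Stein factorization is incompatible with $F$ appearing in $\Jac C$ — for $\delta = 1$ this is immediate ($C \cong E'$, genus~$1 \ne g$), and for $\delta = 2$ one uses Theorem~\ref{T:GE}(b) or the double-cover structure to show the only possible $F$ has trace forced to a value not in the table. Verifying that the table entries really do yield $\delta \le 2$ (or handling $\delta = 3$ by a separate trigonal argument) is where the real work lies, and it will require consulting the specific numerical entries of Tables~\ref{T:3}, \ref{T:4}, and~\ref{T:11} for the exact $(n, D)$ pairs that arise.
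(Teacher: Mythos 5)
Your proposal identifies the right machinery and follows the paper's structure almost exactly: compute the reduced resultant $f$ of the two linear real Weil polynomials, observe that $\deg\Lambda$ (the degree of the polarization pulled back to $E^{g-1}$) is bounded by $f^2$, recognize that since $m^2-4q\in\{-3,-4,-11\}$ is a fundamental discriminant of class number~$1$ one can replace the Hermite-constant estimate of Proposition~\ref{P:newec} by the sharp values $d(R,g-1,\cdot)$ of Lemma~\ref{L:Enbound}, and then deduce a low-degree map $C\to E$ via Lemmas~\ref{L:Enpols} and~\ref{L:CtoE}. This is precisely what the paper does.

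However, you leave the proof genuinely unfinished at the decisive step: you never actually look up the table entries, and your last paragraph pursues several finishes (point counting, Castelnuovo, Stein factorization, trigonal arguments for $\delta=3$) that are neither needed nor sufficient. The rows of the proposition's table are chosen exactly so that the relevant entry $d(R,g-1,f)$ of Table~\ref{T:3}, \ref{T:4}, or~\ref{T:11} is~$1$; once you compute this, Lemma~\ref{L:Enpols}(3) produces an embedding $E\hookrightarrow E^{g-1}$ pulling $\Lambda$ back to the principal polarization of $E$, Lemma~\ref{L:CtoE} converts this to a degree-$1$ morphism $C\to E$, and a degree-$1$ nonconstant morphism of smooth curves is an isomorphism — contradicting $g\ge 3$. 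Your remark that ``$\delta=1$ is immediate'' shows you see the endgame, but you do not establish that $\delta=1$ holds, and the apparatus you set up for $\delta=2,3$ is a detour. One further point you elide: you should also rule out $\sqrt{\deg\Lambda}=1$, since (for example) $d(R_{-11},2,1)=2$ so the $D=1$ column would not give a degree-$1$ map; this case is excluded because a principal polarization on $E^{g-1}\times F'$ splitting off from $\Jac C$ would contradict the indecomposability of Jacobians, so for $f$ prime the determinant fed into the table is exactly $f$.
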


\begin{proof}
Suppose $m^2-4q$, $g$, and the trace of $F$ lie in the table, and
suppose $C$ is a curve of genus $g$ over $\Fq$ whose Jacobian is 
isogenous to $E^{g-1}\times F$.  Write the trace of $F$ as $-m + f$.
Then the reduced resultant of the real Weil polynomials of $E$ and $F$
is $f$, and the largest group scheme that can be embedded in the 
$f$-torsion of an elliptic curve isogenous to $F$ has order $f^2$.  
Pulling the principal polarization of $\Jac C$ back to $E^{g-1}$ gives
us a polarization of degree $f^2$, corresponding to a positive definite
Hermitian matrix, with determinant $f$, over the quadratic order $R$ of
discriminant $m^2 - 4q$.  Looking in Tables~\ref{T:3},~\ref{T:4}, 
and~\ref{T:11}, we find that for all of the cases listed in the 
proposition, the value of $d(R,g-1,f)$ is~$1$, so that the Hermitian
form associated to this matrix has a vector of length~$1$.  This
vector gives us an embedding of $E$ into $E^{g-1}$; let $\psi$ be the
composition of this embedding with the map $E^{g-1}\to \Jac C$.  Then
Lemma~\ref{L:Enpols} shows that $\psi$ pulls the principal polarization
of $C$ back to the principal polarization of $E$, and 
Lemma~\ref{L:CtoE} shows that there is a map of degree~$1$ from $C$
to~$E$.  This is clearly impossible, so there must not be a $C$ with 
Jacobian isogenous to $E^{g-1}\times F$.
\end{proof}

\section{Galois descent}
\label{S:Galois}

For some isogeny classes $\calC$ of abelian varieties over a finite
field~$k$, one can show that every principally-polarized variety in 
$\calC$ can be defined over a subfield $k_0$ of~$k$; it follows that
every Jacobian in $\calC$ comes from a curve that can be defined 
over~$k_0$.  This reduces the problem of determining whether there are
Jacobians in $\calC$ to the problem of determining whether there are
Jacobians in a collection of isogeny classes over a smaller field.
This idea was used in~\cite{Serre:notes}*{pp.~Se42--Se43} and
in~\cite{LauterSerre2001}; the appendix to the latter paper describes
some methods for determining whether principally-polarized varieties
can be defined over subfields.

In this section we give a simple necessary and sufficient condition for
determining whether the entire category of varieties in an 
\emph{ordinary} isogeny class $\calC$ can be descended in this way.  To
begin, we set some notation and make a formal definition.

Let $k_0$ be a finite field, $\calC_0$ an isogeny class of abelian 
varieties over $k_0$, and $k$ a finite extension of $k_0$, say of 
degree $e$ over~$k_0$.  Base extension by $k/k_0$ takes the isogeny 
class $\calC_0$ to an isogeny class $\calC$ over $k$, and the base
extension functor respects properties such as the degrees of isogenies,
the duality of varieties, and whether or not an isogeny is a 
polarization.

\begin{definition}
We say that $\calC$ \emph{descends to $\calC_0$} if base extension
induces an equivalence between the category of abelian varieties in 
$\calC_0$ and the category of abelian varieties in $\calC$.
\end{definition}

Let $\calC$ be an arbitrary isogeny class of ordinary abelian varieties
over a finite field~$k$, and let $A$ be any variety in $\calC$.  Let 
$\pi$ and $\pibar$ be the Frobenius and Verschiebung endomorphisms 
of~$A$, respectively; then the subring $R := \ZZ[\pi,\pibar]$ of 
$\End A$ is contained in the center $K$ of the ring 
$E :=(\End A)\otimes\QQ$.  Up to isomorphism, the ring $R$ and the 
$\QQ$-algebras $K$ and $E$ do not depend on the choice of $A$; we 
denote them by $R_\calC$, $K_\calC$, and $E_\calC$.  The algebra $K$ is
a product of CM fields, and $R$ is an order in~$K$.  Furthermore,
complex conjugation on $K$ sends $\pi$ to~$\pibar$.

\begin{theorem}
\label{T:Galois}
Let $\calC$ be an isogeny class of ordinary abelian varieties over a 
finite field $k$ that contains an index-$e$ subfield $k_0$.

If there is an element $\pi_0$ of $R_\calC$ such that $\pi = \pi_0^e$,
then the characteristic polynomial of $\pi_0$ \textup{(}as an element
of $E_\calC$\textup{)} is the Weil polynomial for an isogeny class
$\calC_0$ of abelian varieties over~$k_0$, and $\calC$ descends 
to~$\calC_0$.

Conversely, if $\calC$ descends to an isogeny class $\calC_0$ 
over~$k_0$, then there is a $\pi_0$ in $R_\calC$ whose characteristic
polynomial \textup{(}as an element of $E_\calC$\textup{)} is equal to 
the Weil polynomial for $\calC_0$ and such that $\pi = \pi_0^e$.
\end{theorem}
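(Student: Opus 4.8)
The plan is to use the Deligne-module equivalence of categories (as in~\cite{Deligne1969,Howe1995}) to translate both directions of the theorem into statements about $R$-modules and the relationship between the orders $R_{\calC}$ over $k$ and over $k_0$. The key observation is that if $\pi = \pi_0^e$ for some $\pi_0 \in R_{\calC}$, then writing $\pibar_0$ for the complex conjugate of $\pi_0$ in $K_{\calC}$, we have $\pi_0\pibar_0 = (\pi\pibar)^{1/e} = q_0$, where $q_0 = \#k_0$, since $\pi\pibar = q = q_0^e$ and the product $\pi_0\pibar_0$ is a totally positive rational integer that must be an $e$-th root of $q$. So $\pi_0$ is a Weil $q_0$-number, its characteristic polynomial $f_0$ over $K_{\calC}$ (equivalently over $\QQ$, taken with the right multiplicity coming from $E_{\calC}$) is an ordinary Weil polynomial for $k_0$ by the Honda--Tate theorem together with the ordinarity criterion (the middle coefficient of $f_0$ is prime to $p$ because $\pi_0$, being a unit times a power-root of $\pi$, generates the prime above $p$ that $\pi$ does). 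This defines $\calC_0$.

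For the descent claim in the forward direction, I would argue as follows. Base extension by $k/k_0$ corresponds, on Deligne modules, to restriction of scalars: a Deligne module for $\calC_0$ is a finitely generated torsion-free module over $R_{\calC_0} = \ZZ[\pi_0,\pibar_0]$, and base-extending to $k$ just remembers the underlying $\ZZ[\pi,\pibar] = \ZZ[\pi_0^e,\pibar_0^e]$-module structure. The point is that when $\pi_0 \in R_{\calC}$, we in fact have $R_{\calC_0} = R_{\calC}$: the inclusion $\ZZ[\pi_0^e,\pibar_0^e] \subseteq \ZZ[\pi_0,\pibar_0]$ is obvious, and conversely $\pi_0,\pibar_0 \in R_{\calC}$ by hypothesis (note $\pibar_0 = q_0/\pi_0 \cdot (\text{unit})$, or more simply $\pibar_0 \in \ZZ[\pi_0]$ since $\pi_0$ satisfies a monic quadratic-type relation over each CM factor — here one should check $\pibar_0 \in R_{\calC}$ carefully, using that $\pi\pibar = q$ gives $\pibar = q/\pi \in R_{\calC}$ and then $\pibar_0$ is obtained from $\pibar$ and $\pi_0$). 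Once $R_{\calC_0} = R_{\calC}$ as rings (with the same complex conjugation, since conjugation sends $\pi_0 \mapsto \pibar_0$ in both pictures), the two categories of Deligne modules are literally the same category, and base extension is the identity functor on objects and morphisms up to this identification. Hence $\calC$ descends to $\calC_0$.

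For the converse, suppose $\calC$ descends to some $\calC_0$ over $k_0$ with Frobenius $\pi_0' \in R_{\calC_0}$. Base extension carries the Frobenius of a variety in $\calC_0$ to the $e$-th power of its own Frobenius, i.e. the Frobenius $\pi$ of the extended variety equals $(\pi_0')^e$ computed in $\End$ of the extended variety. Since the descent is an equivalence of categories, it induces an isomorphism on endomorphism rings compatible with base extension; tracking $\pi_0'$ through this isomorphism gives an element $\pi_0 \in \End A$ (for $A \in \calC$) with $\pi_0^e = \pi$, and $\pi_0$ lies in the center because it commutes with everything (its image does in the $\calC_0$-picture and the equivalence is compatible with the $R$-module structures). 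Its characteristic polynomial as an element of $E_{\calC}$ is the same as the characteristic polynomial of $\pi_0'$ as an element of $E_{\calC_0}$, which is the Weil polynomial of $\calC_0$. It remains to see $\pi_0 \in R_{\calC}$, not merely in the center $K_{\calC}$: this follows because $\pi_0$ is, up to the identification of endomorphism rings, the Frobenius $\pi_0'$, which generates $R_{\calC_0} = \ZZ[\pi_0',\pibar_0']$, and the compatibility of the equivalence with base extension forces this subring to be carried into $\ZZ[\pi,\pibar] = R_{\calC}$.

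I expect the main obstacle to be the bookkeeping in the converse: making precise the sense in which "descent is an equivalence of categories" yields an isomorphism of endomorphism rings that is simultaneously (a) compatible with base extension, (b) compatible with the central embeddings $R_{\calC_0} \hookrightarrow \End$ and $R_{\calC} \hookrightarrow \End$, and (c) identifies complex conjugations — so that the resulting $\pi_0$ genuinely lands in $R_{\calC}$ and has the claimed characteristic polynomial. The cleanest route is probably to run everything through the Deligne-module dictionary from the start: the equivalence $\calC_0 \to \calC$ becomes an equivalence between $R_{\calC_0}$-modules and $R_{\calC}$-modules that is the restriction-of-scalars functor along $R_{\calC} \hookleftarrow ?$; an equivalence of module categories over orders in étale $\QQ$-algebras, compatible with the rational structure, forces an isomorphism of the orders themselves, and chasing where $\pi_0'$ goes produces the desired $\pi_0 \in R_{\calC}$. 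The forward direction is comparatively routine once one verifies the ring equality $R_{\calC_0} = R_{\calC}$.
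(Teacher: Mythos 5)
Your forward direction is essentially the paper's argument: $\pi_0\pibar_0 = q_0$ yields a Weil $q_0$-number, you check ordinarity, invoke Honda--Tate to get $\calC_0$, and then observe via Deligne modules that $R_{\calC_0} = R_{\calC}$ so base extension is the ``identity'' functor. The loose end you flagged about $\pibar_0 \in R_{\calC}$ is immediate: $R_{\calC} = \ZZ[\pi,\pibar]$ is stable under complex conjugation, and conjugation carries $\pi_0$ to $\pibar_0$, so $\pi_0 \in R_{\calC}$ already forces $\pibar_0 \in R_{\calC}$.

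The converse is where there is a genuine gap, and you correctly sensed it. Base extension gives you a central element $\pi_0 \in \End(A_0 \otimes k)$ with $\pi_0^e = \pi$, but centrality in $\End$ only puts $\pi_0$ in $K_{\calC}$, and nothing in your argument explains why $\pi_0$ actually lies in the smaller ring $R_{\calC} = \ZZ[\pi,\pibar]$. Your attempted fix --- ``the compatibility of the equivalence with base extension forces $\ZZ[\pi_0',\pibar_0']$ to be carried into $R_{\calC}$'' --- is exactly the unproved claim, not a justification of it: the functor carries endomorphisms to endomorphisms, but there is no a priori map of the ``abstract Frobenius ring'' $R_{\calC_0}$ into $R_{\calC}$, only the natural inclusion $R_{\calC} = \ZZ[\pi_0^e,\pibar_0^e] \subseteq R_{\calC_0}$, which goes the wrong way. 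The paper closes this gap with an intrinsic characterization: by Waterhouse (or Deligne), $R_{\calC_0}$ is the \emph{smallest} ring arising as the center of $\End A_0$ for $A_0$ in $\calC_0$, and likewise $R_{\calC}$ is the smallest such center in $\calC$. An equivalence of categories preserves endomorphism rings and hence this invariant, so $R_{\calC_0} \cong R_{\calC}$; combined with the inclusion $R_{\calC} \subseteq R_{\calC_0}$ of orders in the same algebra, this forces equality, and then $\pi_0 \in R_{\calC}$ is automatic. Your appeal to Morita-type rigidity for module categories over orders is pointing in a plausible direction, but as written it does not establish the required compatibility with the two embeddings of Frobenius; the ``smallest center'' argument is the missing ingredient.
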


\begin{proof}
Let $q = \#k$ and $q_0 = \#k_0$, so that $q = q_0^e$, and let $p$ be 
the characteristic of~$k$.  Let $R = R_\calC$ and $K = K_\calC$.

Suppose there is an element $\pi_0$ of $R$ such that $\pi = \pi_0^e$,
and let $g$ be its characteristic polynomial.  The product 
$\pi_0\pibar_0$ of $\pi_0$ with its complex conjugate is totally
positive and real, and since
\[ (\pi_0\pibar_0)^e = \pi\pibar = q = q_0^e \]
we see that $\pi_0\pibar_0 = q_0$.  This shows that all of the complex
roots of $g$ have magnitude~$\sqrt{q_0}$, so all of the roots of $g$ 
are $q_0$-Weil numbers. To show that the corresponding isogeny class of
varieties is ordinary, we must show that for every homomorphism 
$\varphi$ of $K_\calC$ to $\QQ_p$, one of the numbers $\varphi(\pi_0)$
and $\varphi(\pibar_0)$ is a unit and the other is not.  But this
follows from the fact that for each $\varphi$, one of the numbers 
$\varphi(\pi_0^e)$ and $\varphi(\pibar_0^e)$ is a unit and the other is
not, which is true because $\calC$ is ordinary.  The Honda--Tate theorem
then shows that $g$ is the Weil polynomial of an isogeny class 
$\calC_0$ of ordinary abelian varieties over~$k_0$.

Let $f$ be the Weil polynomial of $\calC$, and let 
$f = f_1^{e_1}\cdots f_r^{e_r}$ be its factorization into powers of
distinct irreducibles.  Each $f_i$ defines a CM-field~$K_i$, and $K$ is
the product of these~$K_i$. Likewise, we can write the factorization of 
$g$ as $g = g_1^{e_1}\cdots g_r^{e_r}$, where each $g_i$ also 
defines~$K_i$.

Let $R_0$ be the ring $\ZZ[\pi_0,\pibar_0]$, so that 
$R_0 = R_{\calC_0}$.  Deligne's theorem on ordinary abelian 
varieties~\cite{Deligne1969} shows that the category of abelian 
varieties in $\calC_0$ is equivalent to the category of finitely 
generated $R_0$-modules that can be embedded in 
$V:=K_1^{e_1}\times \cdots \times K_r^{e_r}$ as submodules whose images
span $V$ as a $\QQ$-vector space.  (The first author~\cite{Howe1995}
has shown how dual varieties and polarizations can be interpreted in 
this category of $R_0$-modules.)  Likewise, the category of abelian
varieties in $\calC$ is equivalent to the category of finitely 
generated $R$-modules that can be embedded in $V$ as submodules whose
images span $V$ as a $\QQ$-vector space. The base extension functor 
sends an $R_0$-module $M$ to the same module, viewed as a module over 
the subring $\ZZ[\pi_0^e,\pibar_0^e] = R$ of~$R_0$.  But since $\pi_0$
and $\pibar_0$ lie in $R$, we have $R=R_0$, so base extension gives an
equivalence of categories.  This shows that $\calC$ descends 
to~$\calC_0$, and proves the first statement of the theorem.

Now assume that $\calC$ is an ordinary isogeny class that descends to
an isogeny class $\calC_0$ over~$k_0$.  Clearly $\calC_0$ must also be
ordinary.  Let $R_0 = R_{\calC_0}$, and let $\pi_0\in R_0$ be the
Frobenius for~$\calC_0$.  Then the Frobenius $\pi$ for the isogeny
class $\calC$ is~$\pi_0^e$, and the ring $R = R_\calC$ is isomorphic to
the subring $\ZZ[\pi_0^e,\pibar_0^e]$ of~$R_0$.  

We know that $R_0$ is contained in the center of the endomorphism ring
of every variety in $\calC_0$, and it follows 
from~\cite{Waterhouse1969}*{Thm.~7.4, p.~554} or 
from~\cite{Deligne1969} that there exist varieties in $\calC_0$ whose 
endomorphism rings have centers \emph{equal} to~$R_0$.  Thus, $R_0$ can
be characterized as the smallest ring that occurs as the center of the
endomorphism ring of a variety in $\calC_0$.  Likewise, $R$ is the 
smallest ring that occurs as the center of the endomorphism ring of a
variety in~$\calC$. Since we are assuming that base extension gives an
equivalence of categories from $\calC_0$ to $\calC$, we find that we
must have $R_0\cong R$.  It follows that the natural inclusion
$R = \ZZ[\pi_0^e,\pibar_0^e]\subset R_0$ is an isomorphism, so $R$ 
contains an element $\pi_0$ whose characteristic polynomial is the Weil
polynomial for $\calC_0$ and with $\pi = \pi_0^e$.
\end{proof}

\begin{remark}
We could also have proven Theorem~\ref{T:Galois} by using
Th\'eor\`emes~6 and~7 from~\cite{LauterSerre2001}*{\S\S4,5} to show
that each variety in $\calC$, and each polarization of each variety 
in~$\calC$, descends to~$k_0$.  However, we felt that the argument
above, which gives us an entire equivalence of categories between
the isogeny classes $\calC$ and $\calC_0$ all at once, was worth the
small additional effort of introducing Deligne modules into the proof.
\end{remark}

If $A$ is an abelian variety over a finite field~$\Fq$, the 
\emph{standard quadratic twist} $A'$ of $A$ is the twist of $A$ 
corresponding to the element of the cohomology set 
$H^1(\Gal \Fqbar/\Fq, \Aut A)$ represented by the cocycle that sends
the $q$th-power Frobenius automorphism of $\Fqbar$ to the 
automorphism $-1$ of $A$.  Suppose $\calC$ is an isogeny class of 
abelian varieties over a finite field. The \emph{quadratic twist} 
$\calC'$ of $\calC$ is the isogeny class consisting of the standard 
quadratic twists of the elements of $\calC$.  If the Weil polynomial 
of $\calC$ is~$f(x)$, then the Weil polynomial of $\calC'$ is $f(-x)$.

The next result shows how Theorem~\ref{T:Galois} can help us show there
are no Jacobians in an isogeny class. 

\begin{theorem}
\label{T:descent}
Suppose $\calC$ is an isogeny class of ordinary abelian varieties over 
a finite field $k$ that descends to an isogeny class $\calC_0$ over a
subfield $k_0$ of $k$ of index~$e$, and suppose $C$ is a curve over $k$
whose Jacobian lies in $\calC$.
\begin{enumerate}
\item If $e$ is odd, then $C$ has a model over $k_0$ whose Jacobian 
      lies in $\calC_0$.
\item If $e$ is even, then $C$ has a model over $k_0$ whose Jacobian
      lies either in $\calC_0$ or in the quadratic twist of $\calC_0$.
\end{enumerate}
\end{theorem}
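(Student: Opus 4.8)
The plan is to deduce this from the refined Torelli theorem combined with the descent machinery for polarized abelian varieties already used in the proof of Theorem~\ref{T:Galois}. Since $\calC$ descends to $\calC_0$, base extension is a polarization-preserving equivalence of categories, so the canonically polarized Jacobian $(\Jac C,\lambda)$ is the base extension of a principally polarized abelian variety $(A_0,\lambda_0)$ over $k_0$ lying in $\calC_0$. The genus-$0$ and genus-$1$ cases are trivial (over a finite field a genus-one curve has a rational point, hence is its own Jacobian, and a genus-zero curve is $\mathbf{P}^1$), so I assume the genus $g$ of $C$ is at least~$2$.

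Write $\bar k=\overline{k_0}$, $G=\Gal(\bar k/k_0)$ and $H=\Gal(\bar k/k)$, so that $G/H$ is cyclic of order $e$, generated by the image of the $q_0$-power Frobenius $\sigma_0$, where $q_0=\#k_0$. Fixing an isomorphism $(A_0)_k\cong\Jac C$ and passing to $\bar k$, the model $(A_0,\lambda_0)$ over $k_0$ amounts to a Galois descent datum $\{\alpha_\tau\}_{\tau\in G}$ on $X:=\Jac(C_{\bar k})$: each $\alpha_\tau$ is a $\tau$-semilinear automorphism of $X$ preserving $\lambda$, the cocycle condition holds, and the restriction of $\{\alpha_\tau\}$ to $H$ is the descent datum $\{\Jac\Phi_\tau\}_{\tau\in H}$ induced by the descent datum $\{\Phi_\tau\}$ of $C$ over $k$. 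By the refined Torelli theorem, $\Jac$ embeds $\Aut(C_{\bar k})$ into $\Aut(X,\lambda)$ with image equal to all of $\Aut(X,\lambda)$ when $C_{\bar k}$ is hyperelliptic (in particular when $g=2$), and of index $2$, with the nontrivial coset generated by $-1$, otherwise; the same holds ``one $\tau$ at a time'' for $\tau$-semilinear automorphisms, of which the set is a nonempty torsor under $\Aut(C_{\bar k})$ — nonempty because $X^\tau\cong X$ (as polarized varieties, since $X$ has the $k_0$-model $(A_0,\lambda_0)$) forces $C_{\bar k}^\tau\cong C_{\bar k}$. I would then define $\epsilon\colon G\to\{\pm1\}$ by declaring $\epsilon(\tau)=1$ precisely when $\alpha_\tau$ is of the form $\Jac\beta$ for some $\tau$-semilinear automorphism $\beta$ of $C_{\bar k}$. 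The coset description makes $\epsilon$ a homomorphism, and the description of $\{\alpha_\tau\}$ on $H$ shows $\epsilon|_H$ is trivial, so $\epsilon$ factors through $G/H\cong\ZZ/e$.

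The conclusion then splits into two cases. If $\epsilon$ is trivial — which is automatic when $e$ is odd, and also whenever $C_{\bar k}$ is hyperelliptic — then each $\alpha_\tau$ has a unique preimage $\beta_\tau$ under $\Jac$ among $\tau$-semilinear automorphisms of $C_{\bar k}$; injectivity of $\Jac$ forces $\{\beta_\tau\}$ to be a cocycle, and effectivity of Galois descent for curves yields a model $C_0$ of $C_{\bar k}$ over $k_0$ with $(C_0)_k\cong C$ and with $(\Jac C_0,\lambda_{C_0})\cong(A_0,\lambda_0)$, so $\Jac C_0\in\calC_0$. If $\epsilon$ is nontrivial — possible only when $e$ is even and $C_{\bar k}$ is non-hyperelliptic — then $\{\epsilon(\tau)\alpha_\tau\}$ is still a descent datum, because $-1$ is central, and now every term lies in the image of $\Jac$; applying the previous argument to this datum gives a model $C_0$ over $k_0$ with $(C_0)_k\cong C$, and since $\epsilon$ is the unique nontrivial homomorphism $G\to\{\pm1\}$ it sends $\sigma_0$ to $-1$, so the polarized variety attached to $\{\epsilon(\tau)\alpha_\tau\}$ is precisely the standard quadratic twist of $(A_0,\lambda_0)$; hence $\Jac C_0$ lies in the quadratic twist of $\calC_0$. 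I expect the main obstacle to be pinning down the semilinear version of the refined Torelli theorem carefully enough to justify that $\epsilon$ is a well-defined homomorphism and that the realized descent data produce curves with the predicted canonical polarizations; this is essentially the content of the descent criteria of Serre~\cite{Serre:notes} and of the appendix to~\cite{LauterSerre2001}, so in the write-up it should largely reduce to citing those results and tracking the torsor bookkeeping.
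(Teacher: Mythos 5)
Your argument is correct and follows the same underlying strategy as the paper, which simply cites Th\'eor\`emes~4 and~5 of Serre's appendix to~\cite{LauterSerre2001} (descent of Jacobians via Torelli, with the possible $\pm1$ ambiguity producing a character $\eps\col\Gal(k/k_0)\to\{\pm1\}$) and then notes that $\eps$ is trivial when $e$ is odd. You have essentially reproved those two theorems from scratch by working through the Galois-descent and refined-Torelli bookkeeping, and you correctly recognize at the end that the write-up would reduce to citing Serre; the paper's proof is just that citation, making it much shorter but relying on the same mechanism.
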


Thus, to show that there are no Jacobians in $\calC$, it suffices to 
show there are no Jacobians in $\calC_0$ and, if $e$ is even, in the
quadratic twist of $\calC_0$.

\begin{proof}
If $C$ is hyperelliptic, Th\'eor\`eme~4 of the appendix 
to~\cite{LauterSerre2001} shows that $C$ has a model over $k_0$ whose
Jacobian lies in $\calC_0$. If $C$ is not hyperelliptic, Th\'eor\`eme~5
of the same appendix shows that $C$ has a model $C_0$ over $k_0$ whose
Jacobian has an $\eps$-twist that lies in $\calC_0$, where $\eps$ is a
homomorphism from $\Gal k/k_0$ to $\{\pm1\}$.  If $e$ is odd $\eps$ 
must be trivial, so $\Jac C_0$ lies in $\calC_0$, and we get 
statement~(1) of the theorem. If $e$ is even, then the $\eps$-twist is
either trivial or the standard quadratic twist, and we get 
statement~(2).
\end{proof}

\section{Magma implementation}
\label{S:Magma}

As we indicated in the introduction, we have implemented our various
tests in Magma. The main program is 
\texttt{isogeny{\us}classes(q,g,N)}, which takes as input a prime 
power~$q$, a genus~$g$, and a desired number of points~$N$.  Using the
algorithm outlined in~\cite{HoweLauter2003}, we enumerate all of the
monic degree-$g$ polynomials in $\ZZ[x]$ whose leading terms are 
$x^g + (N-q-1) x^{g-1}$ and all of whose roots are real numbers of 
absolute value at most $2\sqrt{q}$.  This set of polynomials includes 
the set of real Weil polynomials of Jacobians of curves with $N$ 
points.  For each such polynomial~$f$, the program runs the subroutine
\texttt{process{\us}isogeny{\us}class}, which answers `no', `maybe', or
`yes' to the question ``Is there a Jacobian whose real Weil polynomial
is equal to~$f$?''

The procedure \texttt{process{\us}isogeny{\us}class}, when supplied 
with a polynomial, performs the following steps:

\begin{enumerate}
\item  The procedure checks whether the polynomial corresponds to an
       isogeny class of abelian varieties; that is, it checks whether
       the polynomial satisfies the conditions of the Honda--Tate 
       theorem~\cite{Tate1968}*{Th\'eor\`eme 1, p.~96}. If not, the 
       answer to the question is `no'.
\item  If the dimension of the isogeny class is $2$, the procedure
       checks whether it meets the conditions of the
       Howe/Nart/Ritzenthaler classification of $2$-dimensional 
       isogeny classes that contain Jacobians~\cite{HNR}. The answer 
       to the question is `yes' or `no', accordingly.
\item  The procedure checks whether the Weil polynomial predicts a 
       non-negative number of degree-$d$ places for all $d$ less than
       or equal to the genus. (The isogeny classes returned by 
       \texttt{isogeny{\us}classes()} have this property, but for 
       isogeny classes that arise recursively in some of the following
       steps, this condition must be checked.) If not, the answer to 
       the question is `no'.
\item  If the isogeny class is maximal (that is, if $N$ is equal to the
       Weil bound for genus-$g$ curves over $\Fq$), the procedure 
       checks whether the results of Korchmaros and
       Torres~\cite{KorchmarosTorres} forbid the existence of a curve
       with Jacobian in the isogeny class.  If so, the answer to the 
       question is `no'.
\item  The procedure checks whether the isogeny class factors as an 
       ordinary isogeny class times the class of a power of a
       supersingular elliptic curve with all endomorphisms defined.
       If so, it checks to see whether Theorem~\ref{T:SSfactor} shows 
       that the isogeny class does not contain a Jacobian. If so, the 
       answer to the question is `no'.
\item  The procedure uses Theorem~\ref{T:Galois} to check whether the 
       isogeny class can be descended to an isogeny class over a 
       subfield.  If so, the procedure uses Theorem~\ref{T:descent} to 
       recurse, and checks  whether the associated isogeny classes over
       the subfield contain Jacobians.  If they do not, then the answer
       to the question is `no'.
\item  The procedure checks whether the real Weil polynomial can be
       split into two factors whose resultant is $1$.  A result of 
       Serre (see~\cite{HoweLauter2003}*{Thm.~1(a), p.~1678}) says that
       no Jacobian can lie in such a class, so if there is such a 
       splitting, the answer to the question is `no'.
\item  Using Proposition~\ref{P:RR}, the procedure checks whether the
       real Weil polynomial can be split into two factors whose gluing
       exponent is $2$.  In this case, any curve whose Jacobian lies in
       the isogeny class must have an involution (Theorem~\ref{T:GE}),
       and so must be a double cover of a curve $D$ whose real Weil
       polynomial $g$ can be determined up to at most two
       possibilities.  If a contradiction can be deduced from this,
       either using Lemma~\ref{L:ramification} (below) or by showing
       recursively that there is no curve with real Weil polynomial 
       equal to~$g$, the answer to the question is `no'. 
\item  The procedure checks to see whether Proposition~\ref{P:newec},
       or a refinement using our tables of maximal lengths of short 
       vectors of Hermitian forms, can be used to deduce the existence
       of a map of known degree $n$ from any curve $C$ with real Weil
       polynomial $f$ to an elliptic curve $E$ with a known trace.  If
       such a map can be shown to exist, and if its existence leads to
       a contradiction (either by using Lemma~\ref{L:ramification} if
       $n=2$, or by noting that $\#C(\Fq) > n \#E(\Fq)$), the answer to
       the question is `no'.
\item  If at this point the question has not yet been answered, the 
       answer defaults to `maybe', because we have no proof that the
       answer is `no', and we do not know that the answer is `yes'.
\end{enumerate}

To decide whether there is a problem with there being a double cover
from a curve $C$ whose Jacobian lies in an isogeny class $\calC_1$ to a
curve $D$ whose Jacobian lies in an isogeny class $\calC_2$, we use the
following lemma:

\begin{lemma}
\label{L:ramification}
Suppose $C$ and $D$ are curves over $\Fq$ of genus $g_C$ and $g_D$, 
respectively, and for each $i$ let $a_i$ and $b_i$ denote the number of
places of degree $i$ on $C$ and on $D$, respectively.  Suppose 
$\varphi\col C\to D$ is a map of degree~$2$.  Let $r$ denote the number
of geometric points of $D$ that ramify in the double cover, and let 
$r_1$ denote the number of $\Fq$-rational points of $D$ that ramify in
the double cover.
\begin{enumerate}
\item We have $2b_1 - 2a_2 - a_1 \le r_1 \le 2b_1-a_1$.
\item We have $r_1 \equiv a_1\bmod 2$ and $r_1\ge 0$.
\item We have $r \ge r_1 + \sum_{1< d\le g_C, d \textup{\ odd}, a_d \textup{\ odd}} d.$
\item If $q$ is even, then $r = \tworank C - 2(\tworank D) + 1$ 
      and $r\le g_C - 2g_D + 1$.        
\item If $q$ is odd, then $r = 2 g_C - 4 g_D + 2$.
\end{enumerate}
\end{lemma}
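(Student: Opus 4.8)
The plan is to build everything on the Riemann--Hurwitz formula and on a careful bookkeeping of how places of $D$ split, ramify, or stay inert in the degree-$2$ cover $\varphi\colon C\to D$. For a degree-$2$ map, each geometric point of $D$ is either a branch point (one geometric preimage, with ramification) or splits into two geometric preimages; over $\Fq$, a rational point of $D$ is either rationally ramified, rationally split (two rational points above it), or rationally inert (one place of degree $2$ above it). First I would set up the splitting types for places of each degree, and count: if $s_1$ denotes the number of rationally split points of $D$ and $i_1$ the number of inert ones, then $r_1 + s_1 + i_1 = b_1$, while the number of degree-$1$ places of $C$ lying over degree-$1$ places of $D$ is $r_1 + 2s_1$, and these account for \emph{all} of $a_1$ except possibly for degree-$1$ places of $C$ lying over a higher-degree place of $D$ --- but such a place would have to have degree dividing $2\cdot 1$, which forces the place of $D$ to have degree $1$ as well, so in fact $a_1 = r_1 + 2s_1$. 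Combining $a_1 = r_1 + 2s_1$ with $s_1 \le b_1 - r_1$ gives $a_1 \le r_1 + 2(b_1-r_1)$, i.e.\ $r_1 \le 2b_1 - a_1$; and $s_1\ge 0$ together with $a_1 = r_1 + 2s_1$ gives the congruence $r_1\equiv a_1\bmod 2$ in~(2), while $r_1\ge 0$ is immediate. For the lower bound in~(1), I would use $i_1\le a_2$: each inert rational point of $D$ produces one degree-$2$ place of $C$, and these are among the $a_2$ degree-$2$ places of $C$, so $i_1\le a_2$; then $b_1 = r_1+s_1+i_1 = r_1 + (a_1-r_1)/2 + i_1 \le r_1 + (a_1-r_1)/2 + a_2$ rearranges to $2b_1 - 2a_2 - a_1\le r_1$. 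That disposes of~(1) and~(2).

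For~(3) I would look at places of odd degree $d>1$ on $C$. A degree-$d$ place of $C$ maps to a place of $D$ of degree dividing $2d$; since $d$ is odd, the degree of the image is either $d$ or $2d$. If the image has degree $2d$ it is inert with a single degree-$2d$ preimage, which is even, not our odd-degree place; so the image has degree $d$, and since $d$ is odd the local degree $2$ cannot come from residue-field extension, hence it must be ramification. Thus every odd-degree place of $C$ of degree $d>1$ lies over a ramified place of $D$ of the same degree $d$, and each such place of $D$ accounts for $d$ geometric branch points. If $a_d$ is odd, at least one such place exists (the preimages come in matched pairs over split points, contributing an even count, so an odd $a_d$ forces a ramified one), contributing at least $d$ to $r$; summing over all odd $d$ in the range $1<d\le g_C$ with $a_d$ odd, and adding the contribution $r_1$ from the rational branch points, yields the bound in~(3). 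The genus bound $d\le g_C$ is there because a branch locus supported on more than $2g_C+\text{(something)}$ points is impossible; more precisely it comes from Riemann--Hurwitz, which I turn to next.

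Parts~(4) and~(5) are pure Riemann--Hurwitz, split by characteristic. In odd characteristic the cover is tame, so Riemann--Hurwitz reads $2g_C - 2 = 2(2g_D-2) + r$, giving $r = 2g_C - 4g_D + 2$ exactly, which is~(5). In characteristic $2$ the cover is an Artin--Schreier-type (wildly ramified) $\ZZ/2$-cover; here I would invoke the Deuring--Shafarevich formula relating the $2$-ranks (the $p$-ranks) of $C$ and $D$: $\tworank C - 1 = 2(\tworank D - 1) + r$, which rearranges to $r = \tworank C - 2\tworank D + 1$; combined with the trivial bound $\tworank\le g$ this gives $r\le g_C - 2g_D + 1$, which is~(4). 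The main obstacle --- really the only subtle point --- is making the splitting-type bookkeeping in the first paragraph airtight, in particular justifying carefully that a degree-$1$ place of $C$ must lie over a degree-$1$ place of $D$ and that $i_1\le a_2$ with the right multiplicities; the rest is standard invocation of Riemann--Hurwitz and Deuring--Shafarevich, for which I would cite a standard reference. I should also double-check the exact normalization in Deuring--Shafarevich (whether it is stated with $\tworank$ or with $\tworank - 1$ on each side and how the wild ramification term is counted) so that the constant in~(4) comes out as stated.
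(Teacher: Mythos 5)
Your approach tracks the paper's exactly: the split/inert/ramified bookkeeping with $b_1 = r_1+s_1+i_1$, $a_1 = r_1+2s_1$, $i_1\le a_2$ for parts~(1) and~(2); the observation that odd-degree places of $C$ over split places of $D$ come in pairs, so odd $a_d$ forces a ramified degree-$d$ place, for~(3); Riemann--Hurwitz for~(5); and Deuring--Shafarevich for~(4). The bookkeeping in~(1) and~(2) is airtight.

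There is a genuine gap in part~(4). You derive $r\le g_C-2g_D+1$ from $r = \tworank C - 2\,\tworank D + 1$ together with ``the trivial bound $\tworank\le g$,'' but this does not follow: you would need $g_C - \tworank C \ge 2(g_D - \tworank D)$, and $\tworank\le g$ only says each side is nonnegative. The paper obtains the two statements in~(4) from two different formulas: Deuring--Shafarevich gives the equality, and Riemann--Hurwitz (not the $2$-rank formula) gives the inequality. In characteristic $2$ a degree-$2$ separable cover is wildly ramified at every branch point, so each ramified geometric point contributes at least $2$ to the degree of the different, whence $2g_C-2 \ge 2(2g_D-2) + 2r$, i.e.\ $r\le g_C-2g_D+1$. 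You should replace your derivation of the inequality with this argument.

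A smaller exposition slip occurs in~(3): you write that a degree-$d$ place of $C$ lies over a place of $D$ ``of degree dividing $2d$,'' and conclude the image degree is ``$d$ or $2d$.'' The divisibility runs the other way: the image degree $m$ divides $d$ (residue fields embed), and $d = m\cdot f$ with residue degree $f\in\{1,2\}$, so $m\in\{d, d/2\}$; since $d$ is odd, $m=d$. Your conclusion is correct, but the justification as written is wrong. (Also, the restriction $d\le g_C$ in the sum is not something to be proved; it merely truncates the sum to finitely many terms and weakens the bound, so your attempted Riemann--Hurwitz justification of it is unnecessary.)
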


\begin{proof}
Let $s_1$ and $i_1$ be the number of rational points of $D$ that split
and are inert (respectively) in $\varphi$.  Then we have
\[ 
   b_1 = s_1 + i_1 + r_1, \quad 
   a_1 = 2 s_1 + r_1, \quad\text{and}\quad
   a_2 \ge i_1. 
\]
These relations lead to statements~(1) and~(2).

If $d$ is odd, then the number of degree-$d$ places on $C$ is equal to
twice the number of splitting degree-$d$ places on~$D$, plus the number
of ramifying degree-$d$ places on~$D$.  If $d$ and $a_d$ are both odd,
then there must be at least one degree-$d$ place of $D$ that ramifies.
This leads to statement~(3).

If $q$ is even, then the Deuring--Shafarevich formula and the
Riemann--Hurwitz formula show that
\[
  r = \tworank C - 2(\tworank D) + 1 \quad\text{and}\quad
  r\le g_C - 2g_D + 1,
\]
respectively. This is statement~(4). If $q$ is odd, then the
Riemann--Hurwitz formula shows that $r = 2 g_C - 4 g_D + 2$, 
which is~(5).
\end{proof}

Lemma~\ref{L:ramification} gives a simple way of testing whether there
is a problem with the existence of a degree-$2$ map $C\to D$, where 
$\Jac C$ and $\Jac D$ lie in known isogeny classes.  If we find 
contradictory statements about $r$ and $r_1$, we know there is a 
problem with there being such a double cover. We note that in the Magma
routines from our earlier paper, we did not make use of all of the 
inequalities listed in Lemma~\ref{L:ramification}, so this is another 
place where the new program improves upon the old.

Finally, in some circumstances we can deduce that a curve with $N$
points must be a degree-$n$ cover of an elliptic curve $E$, for some 
$n>2$. This will lead to a contradiction if $N > n \#E(\Fq)$.

Using these criteria, the procedure 
\texttt{process{\us}isogeny{\us}class} decides whether it can deduce a
contradiction from the existence of a Jacobian in a given isogeny
class.

\section{New results and applications}
\label{S:results}

Our new program improved the best known upper bound on $N_q(g)$ for
more than $16\%$ of the $(q,g)$-pairs in the 2009 version of the tables
of van der Geer and van der Vlugt; this improvement is in addition to
the improvements that came from our earlier paper.  In this section we
present a sample of some of these new results to indicate how the 
theorems from earlier in the paper come into play. We also give some
examples that show that the information obtained from our program,
combined with further analysis, can be used to restrict the possible
Weil polynomials of curves with a given number of points.

\subsection{Proof that $N_9(12)\le 61$.}
Consider the case $q=9$ and $g=12$.  The Ihara bound says that 
$N_9(12)\le 63$, and the Magma program we wrote for our earlier 
paper~\cite{HoweLauter2003} showed that in fact $N_9(12)\le 62$. Our
new program shows that $N_9(12)\le 61$; in this subsection we explain
how our new techniques eliminate  cases that our old techniques could
not.

Our old program could show that if a genus-$12$ curve over $\FF_9$ has
$62$ points, then its real Weil polynomial is either
\[ 
   (x + 2) (x + 4)^6 (x + 5)^4 (x + 6)
   \text{\quad or\quad }
   (x + 4)^8 (x + 6)^2 (x^2 + 8x + 14).
\]   
Here is how our new program shows that the first of these polynomials
is not the real Weil polynomial of a curve.

Let $E$ be the unique elliptic curve over $\FF_9$ with $15$ points, so
that $E$ has trace $-5$.  Using the gluing exponent, or even just using
the resultant, one can show that if $A$ is a principally polarized
abelian variety with real Weil polynomial equal to
$(x + 2) (x + 4)^6 (x + 5)^4 (x + 6)$, then the principal polarization
of $A$ pulls back to a polarization of $E^4$ of degree $1$ or 
degree~$3^2$. Using Table~\ref{T:11} we find that this polarization of
$E^4$ pulls back to give a polarization of degree at most $3^2$ on~$E$,
and Lemma~\ref{L:CtoE} shows that if $A$ is the Jacobian of a 
curve~$C$, then $C$ has a map of degree at most $3$ to~$E$.  But then
$C$ could only have at most $3$ times the number of rational points
that $E$ has, so that $\#C(\FF_9)\le 45$, and in particular $C$ does
not have~$62$ points. This eliminates the first of the two polynomials
above.

We turn now to the second polynomial. The smallest resultant that we
can obtain between two complementary factors of the radical of the
second real Weil polynomial is $4$, and in fact we get this resultant
from each of the three possible splittings. However, the reduced 
resultant of $(x+4)(x+6)$ and $(x^2 + 8x + 14)$ is~$2$, so using 
Theorem~\ref{T:GE} we find that a genus-$12$ curve $C$ with the given
real Weil polynomial must be a double cover of a genus-$2$ curve $D$
with real Weil polynomial equal to $x^2 + 8x + 14$. From its real Weil
polynomial we see that the curve $D$ has only $18$ rational points, so
a double cover of $D$ can have at most $36$ rational points.  This 
eliminates the second real Weil polynomial, and shows that 
$N_9(12)\le 61$.  \qed

The best lower bound on $N_9(12)$ that we know at this time is $56$, as
shown by Gebhardt~\cite{Gebhardt}*{Tbl.~2, p.~96}, so there is still a
gap between our current lower and upper bounds.

\subsection{New values of $N_q(g)$.}
Running our new program, we find that $N_4(7) \le 21$ and 
$N_8(5)\le 29$.  Niederreiter and Xing~\cite{NiederreiterXing} showed
that there is a genus-$7$ curve over $\FF_4$ with $21$ points, and van
der Geer and van der Vlugt~\cite{GeerVlugt1997} showed that there is a
genus-$5$ curve over $\FF_8$ with $29$ points, so we see that
$N_4(7) = 21$ and $N_8(5) = 29$.

Let us sketch how our new program was able to improve upon the earlier
program to show that $N_4(7) \le 21$.  The earlier program showed that
any genus-$7$ curve over $\FF_4$ with $22$ points must have one of the
following five real Weil polynomials:
\begin{align*}
    &x (x + 2)^2 (x + 3)^3 (x + 4),\\
    &(x + 3)^3 (x^4 + 8 x^3 + 20 x^2 + 16 x + 1),\\
    &(x + 1) (x + 3)^4 (x^2 + 4 x + 1),\\
    &(x + 1) (x + 3)^3 (x + 4) (x^2 + 3 x + 1),\\
    &(x + 2)^3 (x + 4)^2 (x^2 + 3 x + 1).
\end{align*}
Our new program eliminates these possibilities.  The first real Weil
polynomial is forbidden by an argument on Hermitian forms.  The second
and fifth are eliminated by Theorem~\ref{T:GE}(b); one can show that
curves with these real Weil polynomials must be double covers of other
curves, and we obtain contradictions from Lemma~\ref{L:ramification}.
The third is eliminated for the same reason; however, for this
polynomial we need to use the gluing exponent and not just the
resultant in order to show that the curve is a double cover.  And
finally, the fourth polynomial can be eliminated by using the
supersingular factor method from Section~\ref{S:supersingular}.

\subsection{Correcting an error.}
In~\cite{HoweLauter2003}*{\S7} we attempted to show two particular
polynomials could not occur as real Weil polynomials of curves, but we
made an error, as is documented in the Corrigendum
to~\cite{HoweLauter2003}. We sketched corrected arguments in the second
appendix of the arXiv version of~\cite{HoweLauter2003}; here we provide 
all the details.

First, we would like to show that 
\[ f = (x + 2)^2 (x + 3) (x^3 + 4 x^2 + x - 3) \]
cannot be the real Weil polynomial of a genus-$6$ curve $C$ 
over~$\FF_3$.  Using Proposition~\ref{P:newec} and Table~\ref{T:8} we 
find that any curve with real Weil polynomial equal to $f$ must be a 
double cover of the unique elliptic curve $E$ over $\FF_3$ with 
trace~$-2$. But $E$ has $6$ rational points, so a double cover of $E$
can have at most $12$ points.  Since $C$ is supposed to have $15$ 
points, we see that no such curve $C$ can exist. Our new program makes
these deductions automatically.

The other argument in~\cite{HoweLauter2003}*{\S7} that we must correct
concerns genus-$4$ curves over $\FF_{27}$; we would like to show that
no such curve can have $65$ points.  Our new program shows that if 
there were a genus-$4$ curve over $\FF_{27}$ with $65$ points, it would
have to be a double cover of the unique elliptic curve $E$ over 
$\FF_{27}$ having $38$ points.  It is not hard to enumerate all of the
genus-$4$ double covers of this elliptic curve; Magma code for doing so
can be found on the first author's web site, in the section associated
to the paper~\cite{Howe2011}. The largest number of points we find on
genus-$4$ double cover of $E$ is~$64$.

\subsection{Proof that $N_{32}(4)\le 72$.}
In our earlier paper~\cite{HoweLauter2003}*{\S6.2} we showed that 
$N_{32}(4)< 75$.  Our new program shows that any genus-$4$ curve over 
$\FF_{32}$ having more than $72$ points must be a double cover of the 
unique elliptic curve with trace $-11$. Enumerating the genus-$4$ 
double covers of this curve is a feasible computational problem which
can be solved by a simple modification of the method outlined 
in~\cite{HoweLauter2003}*{\S6.2} (in general we must consider three 
possible arrangements of ramification points, but in the specific 
situation we faced in~\cite{HoweLauter2003}*{\S6.2} we could eliminate
one of these arrangements). We have implemented an algorithm to 
enumerate these double covers in Magma, and the resulting program can
be found in the file \texttt{32-4.magma}, available at the URL
mentioned in the Introduction. We find that no such double cover has
more than $71$ points, so $N_{32}(4)\le 72$.  This fact, combined with
information gleaned from \texttt{IsogenyClasses.magma}, shows that a 
genus-$4$ curve over $\FF_{32}$ having $72$ points must have real Weil
polynomial equal to $(x + 11)^2 (x^2 + 17 x + 71)$.

In 1999 Mike Zieve found a genus-$4$ curve over $\FF_{32}$ with $71$
points, and while searching for double covers of the trace $-11$ curve
we also found a number of curves with $71$ points.  For example, if 
$r\in\FF_{32}$ satisfies $r^5 + r^2 + 1 = 0$, then the genus-$4$ curve
\begin{align*}
y^2 + xy &= x^3 + x\\
z^2 + \phantom{x}z  &= (r^{14} x^2 + r^{24} x + r^{18})/(x + r) 
\end{align*}
has $71$ points.  Thus, $71\le N_{32}(4)\le 72$.

\subsection{Genus $12$ curves over $\FF_2$}
\label{SS:2-12-15}
The smallest genus $g$ for which the exact value of $N_2(g)$ is unknown
is $g = 12$; the Oesterl\'e bound is $15$, and a genus-$12$ curve with 
$14$ points is known.  Our program, plus some additional work, allows 
us to show that a genus-$12$ curve over $\FF_2$ with $15$ points must 
have one of three real Weil polynomials.

\begin{theorem}
\label{T:2-12-15}
If $C$ is a genus-$12$ curve over $\FF_2$ with $15$ rational points, 
then the real Weil polynomial of $C$ is equal to one of the following 
polynomials\textup{:}
\begin{itemize}
\item[I.]   $(x + 1)^2 (x + 2)^2 (x^2 - 2) (x^2 + 2 x - 2)^3$
\item[II.]  $(x - 1) (x + 2)^2 (x^3 + 2 x^2 - 3 x - 2) 
             (x^3 + 3 x^2 - 3) (x^3 + 4 x^2 + 3 x - 1)$
\item[III.] $(x^2 + x - 3) (x^3 + 3 x^2 - 3)
              (x^3 + 4 x^2 + 3 x - 1) (x^4 + 4 x^3 + 2 x^2 - 5 x - 3)$
\end{itemize}
\end{theorem}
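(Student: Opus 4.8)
The plan is to follow the general strategy of Section~\ref{S:Magma}: enumerate every candidate real Weil polynomial for such a curve, and then eliminate all of them except the three listed. First I would produce the candidate list. Since $C$ has genus~$12$ over $\FF_2$ with $\#C(\FF_2)=15$, its real Weil polynomial $h$ is a monic degree-$12$ element of $\ZZ[x]$ whose coefficient of $x^{11}$ equals $\#C(\FF_2)-q-1=12$ and all of whose roots are real numbers of absolute value at most $2\sqrt{2}$. As explained in~\cite{HoweLauter2003}, there are only finitely many such polynomials and they can be listed mechanically; one then discards any $h$ that fails the Honda--Tate conditions or that predicts a negative number of places of some degree $d\le 12$.

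Next I would run the obstructions built into \texttt{process{\us}isogeny{\us}class} against each surviving~$h$. The supersingular-factor criterion of Theorem~\ref{T:SSfactor} disposes of isogeny classes with a supersingular-elliptic factor of the required shape; Serre's resultant-$1$ splitting criterion (Theorem~\ref{T:GE}(a)) disposes of those whose real Weil polynomial breaks into two coprime-resultant factors; the gluing-exponent-$2$ criterion (Theorem~\ref{T:GE}(b)) forces any curve $C$ in the class to be a double cover of an explicit curve $D$ of smaller genus whose real Weil polynomial is determined up to at most two possibilities; and Proposition~\ref{P:newec}, sharpened using Tables~\ref{T:3}--\ref{T:11}, forces $C$ to admit a map of bounded degree to a known elliptic curve $E$, whence $\#C(\FF_2)\le(\deg\varphi)\,\#E(\FF_2)$. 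Whenever $C$ is forced to be a double cover of a curve~$D$, Lemma~\ref{L:ramification} contributes numerical constraints on the numbers $r$ and $r_1$ of branch points; because $q=2$, part~(4) applies, giving the exact equality $r=\tworank C-2\tworank D+1$ together with $r\le g_C-2g_D+1$, and in practice these conflict with parts~(1)--(3) for nearly all of the remaining~$h$. Galois descent (Theorems~\ref{T:Galois} and~\ref{T:descent}) plays no role here, since $\FF_2$ has no proper subfield.

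The hard part will be the isogeny classes that survive every automatic test. For each of these I would argue by hand in the style of the preceding subsections: use Theorem~\ref{T:GE}(b) or Proposition~\ref{P:newec} to exhibit any curve $C$ in the class as a double cover of an explicitly named curve~$D$ of small genus over $\FF_2$, and then either extract a contradiction from Lemma~\ref{L:ramification}, or, when that lemma is consistent, enumerate all genus-$12$ double covers of $D$ directly --- a finite search over $\FF_2$ --- and check that none has $15$ points. The crux is to push this analysis exactly far enough that the polynomials I.--III.\ are the only ones left: for each of those three one must verify that every applicable obstruction is too weak (no coprime-resultant splitting exists, the relevant gluing exponents exceed~$2$, the Hermitian-lattice bounds of Tables~\ref{T:3}--\ref{T:11} yield only maps $\varphi$ of degree large enough that $(\deg\varphi)\,\#E(\FF_2)\ge 15$, and there is no supersingular factor of the form demanded by Theorem~\ref{T:SSfactor}), so that no further reduction is possible by the methods of this paper.
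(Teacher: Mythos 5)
Your proposal correctly reconstructs the automatic phase: enumerate all monic degree-$12$ real Weil polynomials with $x^{11}$-coefficient $12$ and all roots in $[-2\sqrt{2},2\sqrt{2}]$, discard those failing the Honda--Tate or place-count tests, and then run the Serre resultant-$1$ test, the gluing-exponent-$2$ test with Lemma~\ref{L:ramification}, the supersingular-factor test, and the Hermitian-lattice bounds from Proposition~\ref{P:newec}. That much matches the paper. Where your plan breaks down is in the closing paragraph. You anticipate that every survivor of the automatic tests can be finished off by forcing a double cover $C\to D$ (or a low-degree map to an elliptic curve) and then either invoking Lemma~\ref{L:ramification} or enumerating all genus-$12$ double covers of $D$ over $\FF_2$ directly. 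That template does not cover the two real Weil polynomials that actually survive, namely
\[
\text{(IV)}\quad (x^2+x-1)^2(x^4+5x^3+4x^2-10x-11)(x^4+5x^3+5x^2-5x-5)
\]
and
\[
\text{(V)}\quad (x^3+3x^2-3)(x^4+5x^3+5x^2-5x-5)(x^5+4x^4+x^3-9x^2-5x+3),
\]
neither of which has a linear factor (so Proposition~\ref{P:newec} gives nothing) nor a splitting with gluing exponent $2$ (so Theorem~\ref{T:GE}(b) and Lemma~\ref{L:ramification} give nothing either). There is no candidate $D$ for a double-cover enumeration, and a brute-force search over all genus-$12$ covers is in any case far out of reach.

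The paper handles these two cases with two quite different hand arguments that your toolkit does not contain. For (IV), it uses Lemma~\ref{L:GE} with the gluing exponent $3$ to get an exact sequence $0\to\Delta\to B\times A_3\to\Jac C\to 0$, observes that $3$ is ramified in the CM field $K_3$ and that the image of $\Delta$ must land in $A_3[1-\zeta]$ for an explicit cube root of unity $\zeta\in\calO_3$, then shows the automorphism $(1,\zeta)$ of $B\times A_3$ descends to an order-$3$ automorphism of $\Jac C$ preserving the polarization; Torelli then forces an order-$3$ automorphism of $C$, and a Riemann--Hurwitz count on the resulting \emph{triple} cover $C\to C/\langle\gamma\rangle$ (using the $41$ degree-$8$ places of $C$) yields a negative target genus, a contradiction. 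For (V), the paper abandons covering arguments entirely and instead applies the Deligne-module machinery of \cite{Howe1995}: it computes the obstruction element $I_\calC\in\calB(R)$, shows $I_2\neq 0$ in $\calB(\calO_2)$ because $\sqrt{\Norm(\pi_2-\pibar_2)}=1\not\equiv 39\pmod 4$, verifies via the push-out diagram that $\calB(S)\to\calB(R)$ is injective (the group $D_s$ turns out to be trivial), and concludes that the isogeny class contains no principally polarized abelian variety at all, hence no Jacobian. You would need both of these arguments --- the order-$3$-automorphism/Torelli/triple-cover argument and the $\calB$-invariant obstruction to principal polarizations --- to complete the proof, and neither is a variation on the double-cover theme you sketch.
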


\begin{proof}
Our program shows that the real Weil polynomial of such a curve is 
either one of the three listed in the theorem, or one of the following
two:
\begin{itemize}
\item[IV.] $(x^2 + x - 1)^2 (x^4 + 5 x^3 + 4 x^2 - 10 x - 11)
            (x^4 + 5 x^3 + 5 x^2 - 5 x - 5)$
\item[V.]  $(x^3 + 3 x^2 - 3) (x^4 + 5 x^3 + 5 x^2 - 5 x - 5)
            (x^5 + 4 x^4 + x^3 - 9 x^2 - 5 x + 3)$
\end{itemize}       
So to prove the theorem, all we must do is show that possibilities (IV)
and (V) cannot occur.  First we analyze possibility (IV).

Let 
\begin{align*}
h_1 &= x^2 + x - 1 \\
h_2 &= x^4 + 5 x^3 + 4 x^2 - 10 x - 11 \\
h_3 &= x^4 + 5 x^3 + 5 x^2 - 5 x - 5
\end{align*}
be the irreducible factors of the real Weil polynomial $h$ listed as 
item~(IV).  Since the constant terms of these polynomials are odd, they
correspond to isogeny classes of ordinary abelian varieties.  The 
corresponding Weil polynomials are
\begin{align*}
f_1 &= x^4 + x^3 + 3 x^2 + 2 x + 4 \\
f_2 &= x^8 + 5 x^7 + 12 x^6 + 20 x^5 + 29 x^4 + 40 x^3 + 48 x^2 + 40 x + 16 \\
f_3 &= x^8 + 5 x^7 + 13 x^6 + 25 x^5 + 39 x^4 + 50 x^3 + 52 x^2 + 40 x + 16.
\end{align*}
Let $K_1$, $K_2$, and $K_3$ be the CM fields defined by these three 
Weil polynomials, and for each $i$ let $\pi_i$ be a root of $f_i$
in~$K_i$.  For each $i$ we use Magma to check that $K_i$ has class 
number~$1$, and we compute the discriminant of the maximal order 
$\calO_i$ of~$K_i$. Using~\cite{Howe1995}*{Prop.~9.4, p.~2384} we find
that for each $i$, the discriminant of the subring 
$R_i := \ZZ[\pi_i,\pibar_i]$ of $\calO_i$ is equal to the discriminant 
of $\calO_i$, so we have $R_i = \calO_i$. Using the theory of Deligne 
modules~\cite{Howe1995}, we find for each $i$ the isogeny class with 
real Weil polynomial $h_i$ contains a unique abelian variety $A_i$.

Suppose there were a curve $C$ with real Weil polynomial equal to 
$h_1^2 h_2 h_3$.  Factoring this real Weil polynomial as $h_1^2 h_2$ 
times $h_3$, noting that the reduced resultant of $h_1h_2$ and $h_3$ 
is~$3$, and applying Lemma~\ref{L:GE}, we find that we have an exact 
sequence
\[ 0 \to \Delta \to B\times A_3 \to \Jac C \to 0,\]
where $B$ is isogenous to $A_1^2\times A_2$ and where $\Delta$ is a
self-dual group scheme that can be embedded into both $B[3]$ and 
$A_3[3]$.

Since the reduced resultant of $h_1$ and $h_2$ is $19$, there is an 
isogeny $A_1^2 \times A_2 \to B$ whose degree is a power of~$19$;
in particular, the group scheme $B[3]$ is isomorphic to
$A_1[3]^2 \times A_2[3]$.

The group scheme structure of each $A_i[3]$ is determined by the
$R_i$-module structure of $\calO_i / 3\calO_i$
(see~\cite{Howe1995}*{Lem.~4.13, p.~2372}).  Since $3$ is unramified
in $K_1$ and $K_2$, we find that $B[3]$ is a direct sum of simple group
schemes.  However, $A_3[3]$ is not semi-simple; the prime $3$ is 
ramified in $K_3$, and in fact in $K_3$ we have 
$3 = -\zeta^2 (1 - \zeta)^2$, where 
\[\zeta = -(98 + 69\, \pi_3 + 40\, \pi_3^2 + 18\, \pi_3^3 + 5\, \pi_3^4 + 
          56\,\pibar_3 + 25\, \pibar_3^2 + 7 \,\pibar_3^3) \]
is a cube root of unity. From this we see that the unique semi-simple
subgroupscheme of $A_3[3]$ is the kernel of $1 - \zeta$, so the image
of $\Delta$ under the projection from $B\times A_3$ to $A_3$ lies in 
$A_3[1-\zeta]$.

Now consider the automorphism $\alpha := (1, \zeta)$ of $B\times A_3$.
Clearly $\alpha$ acts trivially on $B\times A_3[1-\zeta]$, so it acts
trivially on the image of $\Delta$, and therefore it descends to give
an automorphism $\beta$ (of order $3$) on $\Jac C$.  Furthermore, since
$\zeta\zetabar = 1$, the automorphism $\alpha$ respects the pullback
to $B\times A_3$ of the principal polarization of $\Jac C$, so $\beta$
is an automorphism of $\Jac C$ as a polarized variety. The strong form
of Torelli's theorem~\cite{Milne1986}*{Thm.~12.1, p.~202} shows that 
$C$ must therefore have an automorphism $\gamma$ of order~$3$. Let $D$
be the quotient  $C/\langle\gamma\rangle$, so that there is a 
degree-$3$ Galois cover $C\to D$.

We calculate from its Weil polynomial that $C$ has $41$ places of 
degree~$8$.  Since $8$ is not a multiple of $3$, every degree-$8$ place
of $C$ lies over a degree-$8$ place of $D$, and since $41$ is congruent
to $2$ mod $3$, we see that at least $2$ degree-$8$ places of $D$ 
ramify in the triple cover $C\to D$. Thus, the degree of the different
of the cover is at least $32$. The Riemann--Hurwitz formula then gives
\[ 22 = 2g_C - 2 = 3(2 g_D - 2) + (\text{degree of different}) \ge 6g_D + 26,\]
so that the genus of $D$ must be negative.  This contradiction shows
that no curve over $\FF_2$ can have real Weil polynomial equal to 
possibility (IV) above.

Next we consider the polynomial $h$ from item (V) above.  As in the 
preceding case, $h$ corresponds to an ordinary isogeny class.  We will
use the results of~\cite{Howe1995} to show that there are no 
principally polarized abelian varieties in this isogeny class.  Let us
sketch what these results are and how they are used.  

A \emph{CM-order} is a ring $R$ that is isomorphic to an order in a
product of CM fields and that is stable under complex conjugation. 
Section~5 of~\cite{Howe1995} defines a contravariant functor $\calB$ 
from the category of CM-orders to the category of finite $2$-torsion 
groups.  If $\calC$ is an isogeny class of ordinary abelian varieties
corresponding to a Weil polynomial~$f$, then each irreducible factor
$f_i$ of $f$ defines a CM field $K_i$ with maximal order $\calO_i$.
Frobenius and Verschiebung generate an order $R$ in the product $K$ of
the $K_i$.  Section 5 of~\cite{Howe1995} uses properties of the 
polynomial $f_i$ to define an element $I_i$ of $\calB(\calO_i)$.  The 
map $R\subset\prod\calO_i\to\calO_i$ gives a homomorphism 
$\calB(\calO_i)\to\calB(R)$, and we define $I_\calC$ to be the sum of
the images of the $I_i$ in $\calB(R)$. Theorem~5.6 (p.~2375) 
of~\cite{Howe1995} says that there is a principally polarized variety
in $\calC$ if and only if $I_\calC=0$.

Proposition~10.1 (p.~2385) of~\cite{Howe1995} shows how to compute 
$\calB(\calO_i)$, and Proposition~10.5 (p.~2387) shows how to compute 
$\calB(R)$ and the map $\calB(\calO_i)\to \calB(R)$.  Proposition~11.3
(p.~2390) and Proposition~11.5 (pp.~2391--2392) show how to compute the
elements $I_i$ of $\calB(\calO_i)$.  We will apply these results to 
show that for the isogeny class $\calC$ defined by the ordinary real
Weil polynomial $h$ in item (V), the element $I_\calC$ is nonzero, so
that there are no principally polarized varieties in $\calC$, and in
particular no Jacobians.

Let 
\begin{align*}
h_1 &= x^3 + 3 x^2 - 3\\
h_2 &= x^4 + 5 x^3 + 5 x^2 - 5 x - 5 \\
h_3 &= x^5 + 4 x^4 + x^3 - 9 x^2 - 5 x + 3
\end{align*}
so that $h = h_1 h_2 h_3$.  The Weil polynomials corresponding to 
the $h_i$ are
\begin{align*}
f_1 &= x^6 + 3 x^5 + 6 x^4 + 9 x^3 + 12 x^2 + 12 x + 8\\
f_2 &= x^8 + 5 x^7 + 13 x^6 + 25 x^5 + 39 x^4 + 50 x^3 + 52 x^2 + 40 x + 16\\
f_3 &= x^{10} + 4 x^9 + 11 x^8 + 23 x^7 + 41 x^6 + 63 x^5 + 82 x^4 + 92 x^3 + 88 x^2 + 64 x + 32.
\end{align*}
Let $K_1$, $K_2$, and $K_3$ be the CM fields defined by these three 
Weil polynomials, and for each $i$ let $\pi_i$ be a root of $f_i$ 
in~$K_i$.  Let $\calO_i$ be the maximal order of $K_i$, and let $R$ be
the subring of $\calO_1\times \calO_2\times \calO_3$ generated by 
$\pi := (\pi_1,\pi_2,\pi_3)$ and $\pibar := (\pibar_1,\pibar_2,\pibar_3)$.

From~\cite{Howe1995}*{Prop~10.1, p.~2385} we see that 
$\calB(\calO_1) \cong \calB(\calO_3) \cong 0$ and 
$\calB(\calO_2) \cong \ZZ/2\ZZ$.  According 
to~\cite{Howe1995}*{Props.~11.3 and~11.5}, the element $I_2$ of 
$\calB(\calO_2)$ will be zero if and only if the positive square root of
$\Norm_{K_2/\QQ}(\pi_2 - \pibar_2)$ is congruent to the middle 
coefficient of $f_2$ modulo~$4$.  The norm of $\pi_2-\pibar_2$ is~$1$,
and the middle coefficient of $f_2$ is $39$, so we find that 
$I_2\ne 0$.

Let $S = \calO_1\times\calO_2\times\calO_3$.  To calculate $\calB(R)$
and the map $i^*\col\calB(S)\to\calB(R)$ obtained from the inclusion 
$i\col R\to S$, we apply~\cite{Howe1995}*{Prop.~10.5, p.~2387}.  That 
proposition shows that there is a push-out diagram
\[
\xymatrix{
D_s \ar[r]\ar[d]^{N} & \calB(S)\ar[d]^{i^*}\\
C_s \ar[r]           & \calB(R)\\
}
\]
where $D_s$ and $C_s$ are certain finite $2$-torsion groups.  Let 
$R^+$ and $S^+$ be the subrings of $R$ and $S$ consisting of elements
fixed by complex conjugation, so that 
$S^+ = \calO_1^+ \times \calO_2^+ \times \calO_3^+$ and  
$R^+ = \ZZ[\pi+\pibar]$.  Then $C_s$ has a basis (as an $\FF_2$-vector 
space) consisting of elements indexed by the set
\[\{ \text{maximal ideals $\gothp$ of $R^+$} \ | \ \ 
     \text{$\gothp$ is singular and is inert in $R/R^+$} \} \]
and $D_s$ has a basis consisting of elements indexed by the set
\[\left\{ \text{maximal ideals $\gothq$ of $S^+$} \ \left| \ \ 
     \vcenter{\hsize=1.8in\noindent
              $\gothq$ is inert in $S/S^+$,\hfill\break
              $\gothq\cap R^+$ is singular in $R^+$, and\hfill\break
              $\gothq\cap R^+$ is inert in $R/R^+$} \right.\right\}.\]
Let us compute the maximal ideals of $R^+$ that are singular and that 
are inert in $R/R^+$.  Since $R^+\cong \ZZ[x]/(h)$ and 
$R = R^+[x]/(x^2 - (\pi+\pibar)x + q)$, this is a straightforward 
matter.  We leave the details to the reader, but the only prime we find
that is singular and inert is $\gothp = (3, \pi+\pibar)$.

There are two maximal ideals of $S^+$ lying over $\gothp$: the ideal
$\gothq_1 = (3,\pi_1+\pibar_1)$ of $\calO_1^+$ and the ideal
$\gothq_3 = (3,\pi_3+\pibar_3)$ of $\calO_3^+.$  We compute that 
$\gothq_1$ splits in $\calO_1$ and that $\gothq_3$ splits in $\calO_3$.
Therefore the group $D_s$ is trivial.  It follows from the push-out 
diagram that $i^*$ is injective, so $i^*(I_2)$ is nonzero, and there is
no principally-polarized variety in the isogeny class associated 
to~$h$.
\end{proof}

\section{Bounds on Shafarevich--Tate groups}
\label{S:Sha}

Propositions~\ref{P:ec} and~\ref{P:newec} both give upper bounds on the
degrees of the smallest map from a curve $C$ to an elliptic curve~$E$,
and therefore say something about the Mordell--Weil lattice of maps
from $C$ to~$E$.  The Birch and Swinnerton-Dyer conjecture for constant
elliptic curves over one-dimensional function fields over finite 
fields, proven by Milne~\cite{Milne1968}, relates the determinants of
such Mordell--Weil lattices to certain Shafarevich--Tate groups. In
this section we make some comparisons between our results and the
conjecture of Birch and Swinnerton-Dyer, and deduce some results about
Shafarevich--Tate groups.

Let $C$ be a curve over a finite field $\Fq$ of characteristic~$p$, 
and let $K$ be its function field.  Suppose there is an embedding 
$\psi\col E\to \Jac C$ of an elliptic curve $E$ into the Jacobian 
of~$C$.  Pick a degree-$1$ divisor $X$ on $C$, and let 
$\varphi\col C\to E$ be the map associated to $\psi$ and $X$ as in 
Lemma~\ref{L:CtoE}.  The assumption that $\psi$ is an embedding
implies that $\varphi$ is \emph{minimal}; that is, $\varphi$ does not
factor through an isogeny from another elliptic curve to $E$.
(Conversely, a minimal map from $C$ to an elliptic curve gives rise to
an embedding of the elliptic curve into the Jacobian of~$C$.)

Suppose further that there is only one factor of $E$ in the Jacobian
of~$C$, up to isogeny; that is, assume that $\Jac C$ is isogenous to 
$A\times E$ for an abelian variety $A$ such that the gluing exponent
$e:=e(A,E)$ is finite.  Let $h$ be the real Weil polynomial of $A$,
let $g$ be the radical of $h$, and let $t$ be the trace of $E$.  Note
that Proposition~\ref{P:ec} says that $\deg\varphi$ divides~$e$, and 
that Proposition~\ref{P:RR} says that $e$ divides the reduced resultant
of $g$ with $x-t$, which is equal to~$g(t)$.

\begin{theorem}
\label{T:MW1}
Let $R$ be the endomorphism ring of $E$, let $\EE$ be the base 
extension of $E$ from $\Fq$ to~$K$, and let $\Sha$ be the 
Shafarevich--Tate group of $\EE$.
\begin{itemize}
\item[(a)] Suppose $R$ is an order in a quadratic field, so that we may
           write $t^2 - 4q = F^2\Delta_0$ for some fundamental
           discriminant $\Delta_0$ and conductor $F$, and so that the
           discriminant of $\End E$ is equal to $f^2\Delta_0$ for some 
           divisor~$f$ of~$F$. Then 
           \[\sqrt{\#\Sha} = \frac{F}{f}
                             \frac{\left|h(t)\right|}{\deg\varphi},\]
           and $\sqrt{\#\Sha}$ is divisible by 
           \[\frac{F}{f}\left|\frac{h(t)}{g(t)}\right|.\]
\item[(b)] Suppose $R$ is an order in a quaternion algebra. Then $q$ is
           a square, we have
           \[\sqrt{\#\Sha} = \frac{\sqrt{q}}{p}
                             \frac{\left|h(t)\right|}{\deg\varphi},\]
           and $\sqrt{\#\Sha}$ is divisible by 
           \[\frac{\sqrt{q}}{p}\left|\frac{h(t)}{g(t)}\right|.\]
\end{itemize}
\end{theorem}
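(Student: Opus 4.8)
The plan is to obtain the formula from the Birch and Swinnerton-Dyer conjecture for the constant elliptic curve $\EE=E\times_{\Fq}K$, which in this setting is a theorem of Milne~\cite{Milne1968}: it equates the leading term of $L(\EE/K,s)$ at $s=1$ with an expression built from the Mordell--Weil regulator, the torsion subgroup, and the Tamagawa factors (all trivial, since $\EE$ has good reduction at every place of $K$), and it guarantees that $\Sha$ is finite. So I would compute each ingredient in turn. A $K$-point of $\EE$ is a morphism $C\to E$ over $\Fq$; using the degree-$1$ divisor already fixed on $C$, one splits such a morphism as a translation composed with a homomorphism that factors through the Albanese variety, so $\EE(K)\cong E(\Fq)\oplus\Hom_{\Fq}(\Jac C,E)$, the torsion subgroup is $E(\Fq)$, and the free part is the $R:=\End E$-module $\Hom_{\Fq}(\Jac C,E)$, of rank $2$ in case~(a) and $4$ in case~(b). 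Since $e(A,E)<\infty$ forces $\Hom_{\Fq}(A,E)=0$, composition with $\psi\col E\hookrightarrow\Jac C$ identifies $\Hom_{\Fq}(\Jac C,E)$ with a finite-index sublattice $\Lambda\subseteq R$ that contains $(\deg\varphi)R$, because $\hat\psi\psi=\deg\varphi$ by Lemma~\ref{L:CtoE}.

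Next I would compute $L(\EE/K,T)$. As $\EE$ is constant, it is the alternating product of the characteristic polynomials of Frobenius on $H^{j}(C_{\Fqbar},\QQ_\ell)\otimes H^1(E_{\Fqbar},\QQ_\ell)$ for $j=0,1,2$; writing $\omega_1,\omega_2,\dots$ for the Frobenius eigenvalues of $C$ (there are $2\dim\Jac C$ of them) and $\pi,\pibar$ for those of $E$, this gives
\[ L(\EE/K,T)=\frac{\prod_i(1-\omega_i\pi T)(1-\omega_i\pibar T)}{(1-\pi T)(1-\pibar T)(1-q\pi T)(1-q\pibar T)}. \]
Because $\EE$ has exactly one isogeny factor of $E$, exactly two of the $\omega_i$ (with multiplicity) lie in $\{\pi,\pibar\}$, so $L$ vanishes to order $r=\rank\EE(K)$ at $T=q^{-1}$; dividing out $(1-qT)^r$ and using the defining relation $f_A(x)=x^{\dim A}h(x+q/x)$ of the real Weil polynomial --- which gives $f_A(\pi)=\pi^{\dim A}h(t)$ --- together with $(1-\pi^2/q)(1-\pibar^2/q)=(4q-t^2)/q$, the leading coefficient works out to $(4q-t^2)\,h(t)^2/q^{\dim\Jac C}$ in case~(a), with the analogous supersingular computation in case~(b). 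The identity to keep in view is $4q-t^2=-\disc\ZZ[\pi_E]$, whose ratio with $|\disc R|=|\disc\End E|$ is $(F/f)^2$.

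The main work is the regulator. For a constant elliptic curve the N\'eron--Tate pairing on $\Hom_{\Fq}(\Jac C,E)$ is, up to a fixed power of $2$, the ``degree pairing'' $\langle\theta_1,\theta_2\rangle=\theta_1\hat\theta_2+\theta_2\hat\theta_1$ (an integer, where $\hat\theta\col E\to\Jac C$ is the dual of $\theta$ under the principal polarizations), whose diagonal values $\langle\theta,\theta\rangle=2\deg(\varphi_\theta)$ are twice the degrees of the maps $C\to E$ attached to the $\theta$'s as in Lemma~\ref{L:CtoE}. Transported through the identification with $\Lambda\subseteq R$, this becomes $(\deg\varphi)^{-1}$ times the reduced-trace form $(x,y)\mapsto x\bar y+y\bar x$ of $R$ restricted to $\Lambda$, so its determinant is $(\deg\varphi)^{-r}[R:\Lambda]^2|\disc R|$ up to the same normalization. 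Evaluating $[R:\Lambda]$ in terms of $\deg\varphi$ and $\#E(\Fq)$ --- using the perfect trace pairing between $\Hom_{\Fq}(\Jac C,E)$ and $\Hom_{\Fq}(E,\Jac C)$ and the relation $\hat\psi\psi=\deg\varphi$ --- and then substituting the $L$-value, the regulator, the torsion order $\#E(\Fq)$, and the trivial Tamagawa numbers into Milne's formula, the $\#E(\Fq)^2$ cancels, the ratio $(4q-t^2)/|\disc R|=(F/f)^2$ surfaces, and one is left with $\sqrt{\#\Sha}=\tfrac{F}{f}\cdot\tfrac{|h(t)|}{\deg\varphi}$; here $\sqrt{\#\Sha}$ is an integer because the Cassels--Tate pairing on $\Sha$ is alternating. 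The divisibility assertion is then immediate: by Propositions~\ref{P:ec} and~\ref{P:RR}, $\deg\varphi$ divides the reduced resultant $|g(t)|$ of $g$ with $x-t$, so $|h(t)|/\deg\varphi$ is an integer multiple of $|h(t)/g(t)|$. Case~(b) follows the same route: there $q$ is a square with $t^2=4q$, $R$ is a maximal order in the quaternion algebra ramified exactly at $p$ and $\infty$, the rank is $4$, and the index $[\End E:\ZZ[\pi_E]]=F/f$ of case~(a) is replaced throughout by $\sqrt q/p$, the quaternionic discriminant and the rank-$4$ $L$-function producing the stated identity.

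I expect the crux --- and the likeliest place for a hidden subtlety --- to be the exact determination of $[R:\Lambda]$ (not merely as a divisor of $(\deg\varphi)^{r}$), together with the reconciliation of the geometric normalization of the Mordell--Weil lattice, which is built from $\End E$, with the cohomological normalization implicit in Milne's form of the Birch and Swinnerton-Dyer formula, which is built from $\ZZ[\pi_E]$ and powers of $q$. The gap between those two normalizations is exactly what produces the factor $F/f$ (respectively $\sqrt q/p$), so it must be tracked precisely, and the supersingular case will require re-deriving each normalization constant separately.
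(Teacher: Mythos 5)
Your outline tracks the paper's proof closely: both invoke Milne's theorem (the Birch and Swinnerton-Dyer formula for constant elliptic curves over function fields), identify the Mordell--Weil group of $\EE$ with a lattice of maps from $C$ to $E$ via the fixed degree-one divisor, compute the regulator as a lattice determinant over $R=\End E$, and divide into the $L$-value. The subtlety you flag at the end --- pinning down $[R:\Lambda]$ exactly rather than just as a divisor of $(\deg\varphi)^r$ --- is precisely where the standing hypothesis that $\psi\col E\hookrightarrow\Jac C$ is an embedding (equivalently, that $\varphi$ is minimal) does its work. Because $\psi$ is injective and $\Jac C$ has exactly one isogeny factor of $E$, every nonzero homomorphism from $\Jac C$ to a curve isogenous to $E$ factors through the image of $\psi$ after dualizing; so the lattice $L$ of based maps $C\to E$ is free of rank one over $R$, generated by $\varphi$, with the canonical-height form $r\varphi\mapsto 2\Norm(r)\deg\varphi$. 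That is equivalent to your $\Lambda$ equaling $(\deg\varphi)R$ on the nose, which removes your worry. Plugging the resulting determinant $D=(\deg\varphi)^2|\disc R|=(\deg\varphi)^2 f^2|\Delta_0|$ into Milne's product formula and simplifying with $\pi_i\pibar_i=q$ gives $D\cdot\#\Sha=F^2|\Delta_0|h(t)^2$, and the $F/f$ factor surfaces exactly as you anticipated from the ratio $|t^2-4q|/|\disc\End E|$. The quaternionic case proceeds the same way with $|\disc R|$ replaced by $p^2$ and $4q-t^2$ by $4q$.
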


\begin{proof}
Since every map from an elliptic curve isogenous to $E$ to $\Jac C$
factors through the embedding $\psi\col E\to\Jac C$, it follows that
any map from $C$ to an elliptic curve isogenous to $E$ factors 
through~$\varphi$.  In particular, the set $L$ of maps from $C$ to $E$
that take $X$ to a divisor on $E$ that sums to the identity is equal
to~$R\varphi$.

The \emph{Mordell--Weil lattice} of $\EE$ over $K$ is the group
$\EE(K)/E(\Fq)$ provided with the pairing coming from the canonical
height (see~\cite{Shioda1989}).  The natural map $L\to \EE(K)/E(\Fq)$
is a bijection, and the quadratic form on $L$ obtained from the height
pairing on $\EE(K)$ is twice the quadratic form given by the degree map
(see~\cite{SilvermanII}*{Thm.~III.4.3, pp.~217--218}).

Suppose that $R$ is an order in a quadratic field.  Using the fact that
$L = R\varphi$, we find that the determinant $D$ of the Mordell--Weil 
lattice for $\EE$ satisfies
\[D = (\deg\varphi)^2\left|\disc R\right|
    = (\deg\varphi)^2 f^2 \left|\Delta_0\right|.\]
Let the eigenvalues of Frobenius for $\Jac C$ be
\[\pi_1, \pibar_1, \pi_2, \pibar_2, \ldots, \pi_g, \pibar_g,\]
indexed so that $\pi_1$ and $\pibar_1$ are the (distinct) eigenvalues
of Frobenius for~$E$.  The Birch and Swinnerton-Dyer conjecture for 
constant elliptic curves over function 
fields~\cite{Milne1968}*{Thm.~3, pp.~100--101} says that the product 
$D\#\Sha$ is equal to 
\[               q^g
                 \left(1 - \frac{\pibar_1}{\pi_1}\right)
                 \left(1 - \frac{\pi_1}{\pibar_1}\right)
      \prod_{i>1}\Bigg(\left(1 - \frac{\pibar_i}{\pi_1}\right)
                 \left(1 - \frac{\pi_i}{\pi_1}\right)
                 \left(1 - \frac{\pibar_i}{\pibar_1}\right)
                 \left(1 - \frac{\pi_i}{\pibar_1}\right)\Bigg). \]
Combining this with the relations $\pi_i\pibar_i = q$, and using the
facts that $\pi_1 + \pibar_1 = t$ and 
$(\pi_1-\pibar_1)^2 = F^2\Delta_0$, we find that 
\begin{align*}
D\#\Sha &= -q^{1-g} (\pi_1-\pibar_1)^2 
           \prod_{i>1}\big((\pi_i-\pi_1)(\pibar_i-\pi_1)
                           (\pi_i-\pibar_1)(\pibar_i-\pibar_1)\big)\\
        &= -q^{1-g} (\pi_1-\pibar_1)^2 
           \prod_{i>1} q (\pi_1+\pibar_1 - \pi_i-\pibar_i)^2 \\
        &= F^2 \left|\Delta_0\right| \prod_{i>1}(t - \pi_i-\pibar_i)^2\\
        &= F^2 \left|\Delta_0\right| h^2(t).
\end{align*}
Using the equation for $D$ given above, we find that 
\[\sqrt{\#\Sha} = \frac{F}{f}\frac{h(t)}{\deg\varphi}.\]
As we noted earlier, $\deg\varphi$ is divisor of~$g(t)$.  This proves
statement~(a).

Suppose $R$ is a (necessarily maximal) order in a quaternion algebra.  
This implies that $q$ is a square and the Frobenius eigenvalues of $E$
are both equal to $\sqrt{q}$ or to~$-\sqrt{q}$.  Again calculating the
determinant $D$ of the Mordell--Weil lattice of $\EE$ by using the
identification $L = R\varphi$, we find that $D = (\deg\varphi)^2 p^2$;
using~\cite{Milne1968}*{Thm.~3, pp.~100--101} we find that 
\[D\#\Sha = q h^2(t).\]
From these equalities we obtain
\[\sqrt{\#\Sha} = \frac{\sqrt{q}}{p}\frac{h(t)}{\deg\varphi},\]
and as above we find that $\sqrt{\#\Sha}$ is a multiple of 
\[\frac{\sqrt{q}}{p}\frac{h(t)}{g(t)}.\qedhere\]
\end{proof}

Our results tell us something about Shafarevich--Tate groups in other
situations as well.  Suppose $C$ is a curve over a finite field $\Fq$
whose Jacobian is isogenous to $E^g$, for some ordinary elliptic curve
$E$ over $\Fq$ with trace~$t$. Let $R$ be the endomorphism ring of~$E$.
There is a universal isogeny $\Psi\col A\to\Jac C$, unique up to
isomorphism, with the property that every map $\psi\col E\to\Jac C$ 
factors through~$\Psi$;  the Deligne module for $A$ is the largest
submodule of the Deligne module for $\Jac C$ that is also an
$R$-module.

Write $t^2 - 4q = F^2\Delta_0$ for a fundamental discriminant
$\Delta_0$ and a conductor~$F$, and let $f$ be the conductor of 
$\End E$, so that $f\mid F$.  

\begin{theorem}
\label{T:MW2}
Let $K$ be the function field of $C$, let $\EE$ be the base extension
of $E$ from $\Fq$ to~$K$, and let $\Sha$ be the Shafarevich--Tate group
of $\EE$.  Then
\[ \sqrt{\#\Sha} = (F/f)^g \deg\Psi.\]
\end{theorem}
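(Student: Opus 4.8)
The plan is to follow the template of the proof of Theorem~\ref{T:MW1}: realize the Mordell--Weil lattice of $\EE$ over $K$ as a lattice of homomorphisms of abelian varieties carrying twice the degree form, compute its regulator, and then read off $\#\Sha$ from Milne's version of the Birch and Swinnerton--Dyer conjecture \cite{Milne1968}. As in that proof, $\EE(K)/E(\Fq)$ is the group of morphisms $C\to E$ carrying a fixed degree-$1$ divisor to a divisor summing to $0$, and by Lemma~\ref{L:CtoE} it is identified with $\Hom(E,\Jac C)$ equipped with the quadratic form $\psi\mapsto\deg\varphi_\psi$, the canonical height pairing being twice this form. The universal property of $\Psi\col A\to\Jac C$ says that every $\psi\col E\to\Jac C$ factors uniquely through~$\Psi$, so $\Hom(E,\Jac C)\cong\Hom(E,A)$; and since the Deligne module of $A$ is an $R$-module, $R$ lies in the center of $\End A$, so Lemma~\ref{L:quadform} applies to $A$ and to the pulled-back polarization $\Lambda:=\Psi^*\lambda_{\Jac C}$, which has degree $(\deg\Psi)^2$.

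The first main step is the regulator computation, and this is where I expect the real work to lie. Under the identifications above the degree form on the Mordell--Weil lattice is essentially the form $Q$ of Lemma~\ref{L:quadform} for the pair $(A,\Lambda)$, whose determinant that lemma expresses through $\disc R=f^2\Delta_0$ and $\deg\Lambda=(\deg\Psi)^2$; doubling the form on a rank-$2g$ lattice introduces a further factor $2^{2g}$. The subtlety is that the Mordell--Weil lattice is really the lattice obtained from $\Hom(E,A)$ after composing with the \emph{non-principal} polarization $A\to\hat A$ --- equivalently, it is dual to the lattice produced directly by Lemma~\ref{L:quadform} --- so one must track, via the duality of pushforward and pullback (Lemma~\ref{L:CtoD}) and the treatment of dual Deligne modules and non-principal polarizations in \cite{Howe1995}, which side of the fraction $\deg\Psi$ falls on. Carried out correctly this should give
\[ \mathrm{Reg}(\EE/K)=\frac{(f^2|\Delta_0|)^g}{(\deg\Psi)^2}. \]

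The second step is then routine. Because $\Jac C$ is isogenous to $E^g$, its Weil polynomial is $(x^2-tx+q)^g$, so the Frobenius eigenvalues of $\Jac C$ are $\pi_1$ and $\pibar_1$ --- the Frobenius eigenvalues of $E$ --- each with multiplicity $g$. Feeding this into the product over Frobenius eigenvalues that Milne's formula attaches to $\mathrm{Reg}(\EE/K)\cdot\#\Sha$ (the same expression that occurs in the proof of Theorem~\ref{T:MW1}, only specialized to a different isogeny type) makes everything collapse, and one gets
\[ \mathrm{Reg}(\EE/K)\cdot\#\Sha=q^{g}\bigl((1-\pi_1/\pibar_1)(1-\pibar_1/\pi_1)\bigr)^{g}. \]
Since $(1-\pi_1/\pibar_1)(1-\pibar_1/\pi_1)=2-(\pi_1^2+\pibar_1^2)/q=(4q-t^2)/q=F^2|\Delta_0|/q$, the right-hand side is $F^{2g}|\Delta_0|^g$. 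Dividing by the regulator from the first step gives $\#\Sha=(F/f)^{2g}(\deg\Psi)^2$, and taking square roots yields the claimed formula $\sqrt{\#\Sha}=(F/f)^g\deg\Psi$. Apart from the regulator bookkeeping --- getting the exponent of $\deg\Psi$ to land in the numerator --- everything here is the same elementary manipulation of Weil numbers used for Theorem~\ref{T:MW1}.
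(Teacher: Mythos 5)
Your strategy (realize the Mordell--Weil lattice via $\Hom(E,A)$, compute its determinant, apply Milne's BSD formula) is the right one, and your collapse of the eigenvalue product to $F^{2g}|\Delta_0|^g$ is exactly what the paper does. But the regulator step is wrong. The ``dual lattice'' claim that places $(\deg\Psi)^2$ in the denominator has things backwards: by Lemma~\ref{L:CtoE}, the Mordell--Weil lattice is \emph{isometric} (not dual) to $\Hom(E,\Jac C)$ equipped with the form $2Q$ where $Q(\psi)=\sqrt{\deg\psi^*\mu}$; composing with $\Psi$ carries this isometrically onto $\Hom(E,A)$ with $Q'(\psi')=\sqrt{\deg\psi'^*\lambda}$ for $\lambda=\Psi^*\mu$, and Lemma~\ref{L:quadform} then gives $\det Q' = |(\disc R)/4|^g\cdot\deg\lambda$ with $\deg\lambda=(\deg\Psi)^2$ in the \emph{numerator}. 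Doubling gives $D=f^{2g}|\Delta_0|^g(\deg\Psi)^2$, which is the paper's own displayed formula for the regulator; your $(f^2|\Delta_0|)^g/(\deg\Psi)^2$ does not agree with it.

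Combining the correct $D$ with $D\cdot\#\Sha=F^{2g}|\Delta_0|^g$ yields $\sqrt{\#\Sha}=(F/f)^g/\deg\Psi$, the reciprocal of the stated formula, so the paper's two displayed equations are in fact inconsistent with its concluding line. A direct Deligne-module check in the case $T_{\Jac C}\cong\ZZ[\pi,\pibar]^g$ (where $\deg\Psi=(F/f)^g$, the maximal possible value) gives $D=F^{2g}|\Delta_0|^g$ and hence $\#\Sha=1$, consistent with $(F/f)^g/\deg\Psi$ but not with $(F/f)^g\deg\Psi$. Your duality argument happens to land on the stated formula only because it introduces exactly the compensating error; it is not a correct derivation, and the honest version of your own first step produces $\sqrt{\#\Sha}=(F/f)^g/\deg\Psi$ rather than the stated result.
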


\begin{proof}
Let $\mu$ be the canonical principal polarization of $\Jac C$, and let
$\lambda$ be the pullback of $\mu$ to $A$ via the isogeny~$\Psi$.
Consider the lattice of homomorphisms $E\to\Jac C$, with the quadratic
form $Q$ provided by the square root of the degree of the pullback of 
$\mu$ to $E$. Since every homomorphism $E\to\Jac C$ factors through 
$\Psi$, this lattice is isomorphic to to the lattice of 
homomorphisms $E\to A$, with the quadratic form given by the square 
root of the degree of the pullback of $\lambda$ to $E$.  Applying 
Lemma~\ref{L:quadform}, we see that the determinant of this lattice is
\[\left|(\disc R)/4\right|^g (\deg\Psi)^2 
    = (f/2)^{2g} \left|\Delta_0\right|^g (\deg\Psi)^2.\]
As we have noted earlier, the Mordell--Weil lattice of $\EE$ is 
isomorphic to the lattice of homomorphisms $E\to\Jac C$, with the 
quadratic form~$2Q$.   Therefore the determinant $D$ of the 
Mordell--Weil lattice is 
\[D = f^{2g} \left|\Delta_0\right|^g (\deg\Psi)^2.\]
On the other hand, the Birch and Swinnerton-Dyer conjecture says that
in this case we have
\begin{align*}
D\#\Sha &=  q^g \left(1-\frac{\pibar}{\pi}\right)^g 
                \left(1-\frac{\pi}{\pibar}\right)^g \\
        &=  (\pi-\pibar)^g (\pibar-\pi)^g \\
        &= (4q - t^2)^g \\
        &= F^{2g} \left|\Delta_0\right|^g.
\end{align*}
Thus we find that
\[ \sqrt{\#\Sha} = (F/f)^g \deg\Psi. \qedhere\]
\end{proof}







\begin{bibdiv}
\begin{biblist}

\bib{Blichfeldt1914}{article}{
   author={Blichfeldt, H. F.},
   title={A new principle in the geometry of numbers, with some
          applications},
   journal={Trans. Amer. Math. Soc.},
   volume={15},
   date={1914},
   number={3},
   pages={227--235},
   note={\href{http://dx.doi.org/10.2307/1988585}
              {DOI: 10.2307/1988585}},
}

\bib{Blichfeldt1935}{article}{
   author={Blichfeldt, H. F.},
   title={The minimum values of positive quadratic forms in six, seven
          and eight variables},
   journal={Math. Z.},
   volume={39},
   date={1935},
   number={1},
   pages={1--15},
   note={\href{http://dx.doi.org/10.1007/BF01201341}
              {DOI: 10.1007/BF01201341}},
}

\bib{magma}{article}{
   author={Bosma, Wieb},
   author={Cannon, John},
   author={Playoust, Catherine},
   title={The Magma algebra system. I. The user language},
   journal={J. Symbolic Comput.},
   volume={24},
   date={1997},
   number={3-4},
   pages={235--265},
   issn={0747-7171},
   note={Computational algebra and number theory (London, 1993).
         \href{http://dx.doi.org/10.1006/jsco.1996.0125}
                           {DOI: 10.1006/jsco.1996.0125}},
}

\bib{Cassels}{book}{
   author={Cassels, J. W. S.},
   title={Rational quadratic forms},
   series={London Mathematical Society Monographs},
   volume={13},
   publisher={Academic Press Inc. [Harcourt Brace Jovanovich Publishers]},
   place={London},
   date={1978},
}

\bib{Deligne1969}{article}{
   author={Deligne, Pierre},
   title={Vari\'et\'es ab\'eliennes ordinaires sur un corps fini},
   journal={Invent. Math.},
   volume={8},
   date={1969},
   pages={238--243},
   issn={0020-9910},
   note={\href{http://dx.doi.org/10.1007/BF01406076}
              {DOI: 10.1007/BF01406076}},
}

\bib{FuhrmannTorres}{article}{
   author={Fuhrmann, Rainer},
   author={Torres, Fernando},
   title={The genus of curves over finite fields with many rational points},
   journal={Manuscripta Math.},
   volume={89},
   date={1996},
   number={1},
   pages={103--106},
   note={\href{http://dx.doi.org/10.1007/BF02567508}
              {DOI: 10.1007/BF02567508}},
}

\bib{Gebhardt}{article}{
   author={Gebhardt, Max},
   title={Constructing function fields with many rational places via the
   Carlitz module},
   journal={Manuscripta Math.},
   volume={107},
   date={2002},
   number={1},
   pages={89--99},
   note={\href{http://dx.doi.org/10.1007/s002290100226}
              {DOI: 10.1007/s002290100226}},
}

\bib{GeerVlugt2000}{article}{
   author={van der Geer, Gerard},
   author={van der Vlugt, Marcel},
   title={Tables of curves with many points},
   journal={Math. Comp.},
   volume={69},
   date={2000},
   number={230},
   pages={797--810},
   note={\href{http://dx.doi.org/10.1090/S0025-5718-99-01143-6}
              {DOI: 10.1090/S0025-5718-99-01143-6}},
}

\bib{GeerVlugt1997}{article}{
   author={van der Geer, Gerard},
   author={van der Vlugt, Marcel},
   title={How to construct curves over finite fields with many points},
   conference={
      title={Arithmetic geometry},
      address={Cortona},
      date={1994},
   },
   book={
      series={Sympos. Math., XXXVII},
      publisher={Cambridge Univ. Press},
      place={Cambridge},
   },
   date={1997},
   pages={169--189},
   note={\href{http://arxiv.org/abs/alg-geom/9511005v2}
              {arXiv:alg-geom/9511005v2}},
}

\bib{GL1987}{book}{
   author={Gruber, P. M.},
   author={Lekkerkerker, C. G.},
   title={Geometry of numbers},
   series={North-Holland Mathematical Library},
   volume={37},
   edition={2},
   publisher={North-Holland Publishing Co.},
   place={Amsterdam},
   date={1987},
   pages={xvi+732},
}

\bib{Hermite1850}{article}{
   author={Hermite, Ch.},
   title={Extraits de lettres de M.~Ch.~Hermite \`a M.~Jacobi sur 
          diff\'erents objets de la th\'eorie des nombres},
   journal={J. Reine Angew. Math.},
   volume={40},
   date={1850},
   pages={261--277},
   note={\url{http://resolver.sub.uni-goettingen.de/purl?GDZPPN002146940}},
}

\bib{Howe1995}{article}{
   author={Howe, Everett W.},
   title={Principally polarized ordinary abelian varieties over finite
   fields},
   journal={Trans. Amer. Math. Soc.},
   volume={347},
   date={1995},
   pages={2361--2401},
   issn={0002-9947},
   note={\href{http://dx.doi.org/10.2307/2154828}
              {DOI: 10.2307/2154828}},
}

\bib{Howe2011}{article}{
  author = {Howe, Everett W.},
  title = {New bounds on the maximum number of points on genus-$4$
           curves over small finite fields},
  year = {2011},
  note = {\href{http://arxiv.org/abs/1108.5393}
               {arXiv:1108.5393v2 [math.AG]}},
}

\bib{HoweLauter2003}{article}{
   author={Howe, E. W.},
   author={Lauter, K. E.},
   title={Improved upper bounds for the number of points on curves over
   finite fields},
   journal={Ann. Inst. Fourier (Grenoble)},
   volume={53},
   date={2003},
   number={6},
   pages={1677--1737},
   note={Corrigendum, {\bf 57} (2007) 1019--1021;
         \url{http://aif.cedram.org/aif-bin/item?id=AIF_2003__53_6_1677_0},
         also available at 
         \href{http://arxiv.org/abs/math/0207101}
              {arXiv:math/0207101v6 [math.NT]}},
}

\bib{HNR}{article}{
   author={Howe, Everett W.},
   author={Nart, Enric},
   author={Ritzenthaler, Christophe},
   title={Jacobians in isogeny classes of abelian surfaces over finite
   fields},
   journal={Ann. Inst. Fourier (Grenoble)},
   volume={59},
   date={2009},
   number={1},
   pages={239--289},
   note={\url{http://aif.cedram.org/aif-bin/item?id=AIF_2009__59_1_239_0}},
}

\bib{Ihara}{article}{
   author={Ihara, Yasutaka},
   title={Some remarks on the number of rational points of algebraic curves
   over finite fields},
   journal={J. Fac. Sci. Univ. Tokyo Sect. IA Math.},
   volume={28},
   date={1981},
   number={3},
   pages={721--724 (1982)},
   note={\url{http://hdl.handle.net/2261/6319}},
}

\bib{Kohnlein}{thesis}{
  author={K\"ohnlein, Angelika},
  title={Obere Schranken f\"ur die Punktanzahl von Kurven \"uber endlichen K\"orpern},
  type={Diplomarbeit},
  organization={Technische Universit\"at Darmstadt},
  date={2003},
}

\bib{KorchmarosTorres}{article}{
   author={Korchm{\'a}ros, G{\'a}bor},
   author={Torres, Fernando},
   title={On the genus of a maximal curve},
   journal={Math. Ann.},
   volume={323},
   date={2002},
   number={3},
   pages={589--608},
   note={\href{http://dx.doi.org/10.1007/s002080200316}
              {DOI: 10.1007/s002080200316}},
}

\bib{KorkineZolotareff}{article}{
   author={Korkine, A.},
   author={Zolotareff, G.},
   title={Sur les formes quadratiques positives},
   journal={Math. Ann.},
   volume={11},
   date={1877},
   number={2},
   pages={242--292},
   note={\href{http://dx.doi.org/10.1007/BF01442667}
              {DOI: 10.1007/BF01442667}},
}

\bib{Lauter1999}{article}{
   author={Lauter, Kristin},
   title={Improved upper bounds for the number of rational points on
   algebraic curves over finite fields},
   journal={C. R. Acad. Sci. Paris S\'er. I Math.},
   volume={328},
   date={1999},
   number={12},
   pages={1181--1185},
   note={\href{http://dx.doi.org/10.1016/S0764-4442(99)80436-4}
              {DOI: 10.1016/S0764-4442(99)80436-4}},
}

\bib{Lauter2000}{article}{
   author={Lauter, Kristin},
   title={Non-existence of a curve over ${\bf F}_3$ of genus $5$ with
   $14$ rational points},
   journal={Proc. Amer. Math. Soc.},
   volume={128},
   date={2000},
   number={2},
   pages={369--374},
   note={\href{http://dx.doi.org/10.1090/S0002-9939-99-05351-4}
              {DOI: 10.1090/S0002-9939-99-05351-4}},
}

\bib{Lauter:conf}{article}{
   author={Lauter, Kristin},
   title={Zeta functions of curves over finite fields with many rational
   points},
   conference={
      title={Coding theory, cryptography and related areas},
      address={Guanajuato},
      date={1998},
   },
   book={
      publisher={Springer},
      place={Berlin},
   },
   date={2000},
   pages={167--174},
}

\bib{LauterSerre2001}{article}{
   author={Lauter, Kristin},
   title={Geometric methods for improving the upper bounds on the number of
   rational points on algebraic curves over finite fields},
   journal={J. Algebraic Geom.},
   volume={10},
   date={2001},
   number={1},
   pages={19--36},
   note={With an appendix in French by J.-P. Serre.  Available at
         \href{http://arxiv.org/abs/math/0104247v1}
              {arXiv:math/0104247v1 [math.AG]}},

}

\bib{LauterSerre2002}{article}{
   author={Lauter, Kristin},
   title={The maximum or minimum number of rational points on genus three
   curves over finite fields},
   journal={Compositio Math.},
   volume={134},
   date={2002},
   number={1},
   pages={87--111},
   note={With an appendix by Jean-Pierre Serre.
   \href{http://dx.doi.org/10.1023/A:1020246226326}
              {DOI:10.1023/A:1020246226326}},
}

\bib{Manin}{article}{
   author={Manin, Yu. I.},
   title={What is the maximum number of points on a curve over 
         ${\bf F}_{2}$?},
   journal={J. Fac. Sci. Univ. Tokyo Sect. IA Math.},
   volume={28},
   date={1981},
   number={3},
   pages={715--720 (1982)},
   note={\url{http://hdl.handle.net/2261/6318}},
}

\bib{Milne1968}{article}{
   author={Milne, J. S.},
   title={The Tate-\v Safarevi\v c group of a constant abelian variety},
   journal={Invent. Math.},
   volume={6},
   date={1968},
   pages={91--105},
   note={\href{http://dx.doi.org/10.1007/BF01389836}
              {DOI:10.1007/BF01389836}},
}

\bib{Milne1986}{article}{
   author={Milne, J. S.},
   title={Jacobian varieties},
   conference={
      title={Arithmetic geometry},
      address={Storrs, Conn.},
      date={1984},
   },
   book={
      publisher={Springer},
      place={New York},
   },
   date={1986},
   pages={167--212},
   note={\url{http://jmilne.org/math/articles/index.html\#1986c}},
}

\bib{Mumford1974a}{book}{
   author={Mumford, David},
   title={Abelian varieties},
   series={Tata Institute of Fundamental Research Studies in Mathematics},
   volume={5},
   note={With appendices by C. P. Ramanujam and Yuri Manin;
   Corrected reprint of the second (1974) edition},
   publisher={Published for the Tata Institute of Fundamental Research,
   Bombay},
   date={2008},
}

\bib{Mumford1974b}{article}{
   author={Mumford, David},
   title={Prym varieties. I},
   conference={
      title={Contributions to analysis (a collection of papers dedicated to
      Lipman Bers)},
   },
   book={
      publisher={Academic Press},
      place={New York},
      editor={Lars V. Ahlfors},
      editor={Irwin Kra},
      editor={Bernard Maskit},
      editor={Louis Nirenberg},
   },
   date={1974},
   pages={325--350},
}

\bib{NartRitzenthaler}{article}{
   author={Nart, Enric},
   author={Ritzenthaler, Christophe},
   title={Non-hyperelliptic curves of genus three over finite fields of
   characteristic two},
   journal={J. Number Theory},
   volume={116},
   date={2006},
   number={2},
   pages={443--473},
   note={\href{http://dx.doi.org/10.1016/j.jnt.2005.05.014}
              {DOI: 10.1016/j.jnt.2005.05.014}},
}

\bib{NiederreiterXing}{article}{
   author={Niederreiter, Harald},
   author={Xing, Chaoping},
   title={Cyclotomic function fields, Hilbert class fields, and global
   function fields with many rational places},
   journal={Acta Arith.},
   volume={79},
   date={1997},
   number={1},
   pages={59--76},
}

\bib{Pohst}{article}{
   author={Pohst, M.},
   title={A note on index divisors},
   conference={
      title={Computational number theory},
      address={Debrecen},
      date={1989},
   },
   book={
      publisher={de Gruyter},
      place={Berlin},
      editor={A. Peth\"o},
      editor={M. E. Pohst},
      editor={H. C. Williams},
      editor={H. G. Zimmer},
   },
   date={1991},
   pages={173--182},
}

\bib{Savitt}{article}{
   author={Savitt, David},
   title={The maximum number of points on a curve of genus 4 over 
          ${\mathbb F}_8$ is 25},
   journal={Canad. J. Math.},
   volume={55},
   date={2003},
   number={2},
   pages={331--352},
   note={With an appendix by Kristin Lauter. 
         \href{http://dx.doi.org/10.4153/CJM-2003-015-7}
              {DOI: 10.4153/CJM-2003-015-7}},
}

\bib{Schiemann}{article}{
   author={Schiemann, Alexander},
   title={Classification of Hermitian forms with the neighbour method},
   journal={J. Symbolic Comput.},
   volume={26},
   date={1998},
   number={4},
   pages={487--508},
   note={\href{http://dx.doi.org/10.1006/jsco.1998.0225}
                           {DOI: 10.1006/jsco.1998.0225},
              online tables available at
              \url{http://www.math.uni-sb.de/ag/schulze/Hermitian-lattices/}},
}

\bib{Serre1983a}{article}{
   author={Serre, Jean-Pierre},
   title={Sur le nombre des points rationnels d'une courbe alg\'ebrique sur
   un corps fini},
   journal={C. R. Acad. Sci. Paris S\'er. I Math.},
   volume={296},
   date={1983},
   number={9},
   pages={397--402},
   note = {= \OE{}uvres [128], \url{http://gallica.bnf.fr/ark:/12148/bpt6k55351747/f35}},
}

\bib{Serre1983b}{article}{
   author={Serre, Jean-Pierre},
   title={Nombres de points des courbes alg\'ebriques sur ${\mathbb F}_{q}$},
   conference={
      title={Seminar on number theory, 1982--1983},
      address={Talence},
      date={1982/1983},
   },
   book={
      publisher={Univ. Bordeaux I},
      place={Talence},
   },
   date={1983},
   pages={Exp. No. 22, 8},
   note = {= \OE{}uvres [129], \url{http://gallica.bnf.fr/ark:/12148/bpt6k55351747/f35}},
}

\bib{Serre1984}{article}{
   author = {Serre, Jean-Pierre},
   title = {R\'esum\'e des cours de 1983--1984},
   journal = {Ann. Coll\`ege France},
   date = {1984},
   pages = {79--83},
   note = {=\OE{}uvres [132]},
}   

\bib{Serre:notes}{misc}{
   author = {Serre, Jean-Pierre},
   title = {Rational points on curves over finite fields},
   date = {1985},
   note = {Unpublished notes by Fernando Q. Gouv\'ea of lectures at Harvard University},
}

\bib{Shioda1989}{article}{
   author={Shioda, Tetsuji},
   title={Mordell--Weil lattices and Galois representation.},
   partial={
      part={I},
      journal={Proc. Japan Acad. Ser. A Math. Sci.},
      volume={65},
      date={1989},
      number={7},
      pages={268--271},
   },
   partial={
      part={II},
      journal={Proc. Japan Acad. Ser. A Math. Sci.},
      volume={65},
      date={1989},
      number={8},
      pages={296--299},
   },      
   partial={
      part={III},
      journal={Proc. Japan Acad. Ser. A Math. Sci.},
      volume={65},
      date={1989},
      number={8},
      pages={300--303},
   },      
   note = {\href{http://dx.doi.org/10.3792/pjaa.65.268}
                             {DOI: 10.3792/pjaa.65.268},
           \href{http://dx.doi.org/10.3792/pjaa.65.296}
                                  {10.3792/pjaa.65.296},
           \href{http://dx.doi.org/10.3792/pjaa.65.300}
                                  {10.3792/pjaa.65.300},
          },
}

\bib{SilvermanII}{book}{
   author={Silverman, Joseph H.},
   title={Advanced topics in the arithmetic of elliptic curves},
   series={Graduate Texts in Mathematics},
   volume={151},
   publisher={Springer-Verlag},
   place={New York},
   date={1994},
}

\bib{Stark}{article}{
   author={Stark, H. M.},
   title={On the Riemann hypothesis in hyperelliptic function fields},
   conference={
      title={Analytic number theory (Proc. Sympos. Pure Math., Vol. XXIV,
      St. Louis Univ., St. Louis, Mo., 1972)},
   },
   book={
      publisher={Amer. Math. Soc.},
      place={Providence, R.I.},
   },
   date={1973},
   pages={285--302},
}

\bib{StohrVoloch}{article}{
   author={St{\"o}hr, Karl-Otto},
   author={Voloch, Jos{\'e} Felipe},
   title={Weierstrass points and curves over finite fields},
   journal={Proc. London Math. Soc. (3)},
   volume={52},
   date={1986},
   number={1},
   pages={1--19},
   issn={0024-6115},
   note={\href{http://dx.doi.org/10.1112/plms/s3-52.1.1}
              {DOI: 10.1112/plms/s3-52.1.1}},
}

\bib{Tate1968}{article}{
   author={Tate, John},
   title={Classes d'isog\'enie des vari\'et\'es ab\'eliennes sur
          un corps fini (d'apr\`es T. Honda)},
   pages={95--110},
   book={
     title={S\'eminaire Bourbaki. Vol. 1968/69: Expos\'es 347--363},
     series={Lecture Notes in Mathematics, Vol. 179},
     publisher={Springer-Verlag},
     place={Berlin},
     date={1971},
     pages={iv+295},
   },
   note={\href{http://dx.doi.org/10.1007/BFb0058807}
              {DOI: 10.1007/BFb0058807}},
}

\bib{DrinfeldVladut1983}{article}{
   author={Vl{\`e}duts, S. G.},
   author={Drinfel{\cprime}d, V. G.},
   title={The number of points of an algebraic curve},
   journal={Funktsional. Anal. i Prilozhen.},
   volume={17},
   date={1983},
   number={1},
   pages={68--69},
   translation={
     journal={Functional Anal. Appl.},
     volume={17},
     date={1983},
     number={1},
     pages={53--54},
    },
   note={\href{http://dx.doi.org/10.1007/BF01083182}
                           {DOI: 10.1007/BF01083182}},
}

\bib{Waterhouse1969}{article}{
   author={Waterhouse, William C.},
   title={Abelian varieties over finite fields},
   journal={Ann. Sci. \'Ecole Norm. Sup. (4)},
   volume={2},
   date={1969},
   pages={521--560},
   note = {\url{http://www.numdam.org/item?id=ASENS_1969_4_2_4_521_0}},
}

\bib{Weil1940}{article}{
   author={Weil, Andr{\'e}},
   title={Sur les fonctions alg\'ebriques \`a corps de constantes fini},
   journal={C. R. Acad. Sci. Paris},
   volume={210},
   date={1940},
   pages={592--594},
   note = {\url{http://gallica.bnf.fr/ark:/12148/bpt6k31623/f592}},
}

\bib{Weil1941}{article}{
   author={Weil, Andr{\'e}},
   title={On the Riemann hypothesis in function-fields},
   journal={Proc. Nat. Acad. Sci. U. S. A.},
   volume={27},
   date={1941},
   pages={345--347},
   note = {\url{http://www.pnas.org/content/27/7/345.short}},
}

\bib{Weil1945}{book}{
   author={Weil, Andr{\'e}},
   title={Sur les courbes alg\'ebriques et les vari\'et\'es qui s'en
   d\'eduisent},
   series={Actualit\'es Sci. Ind., no. 1041 = Publ. Inst. Math. Univ.
   Strasbourg {\bf 7} (1945)},
   publisher={Hermann et Cie., Paris},
   date={1948},
}

\bib{Weil1946}{book}{
   author={Weil, Andr{\'e}},
   title={Vari\'et\'es ab\'eliennes et courbes alg\'ebriques},
   series={Actualit\'es Sci. Ind., no. 1064 = Publ. Inst. Math. Univ.
   Strasbourg 8 (1946)},
   publisher={Hermann \& Cie., Paris},
   date={1948},
   pages={165},
}

\bib{Zaytsev}{article}{
  author = {Zaytsev, Alexey},
  title = {Optimal curves of low genus over finite fields},
  year = {2011},
  note = {\href{http://arxiv.org/abs/0706.4203}
               {arXiv:0706.4203v3 [math.AG]}},
}

\end{biblist}
\end{bibdiv}

\end{document}